\documentclass[amscd,amssymb,verbatim,10pt]{amsart}
\usepackage[fontsize = 10bp]{fontsize}
\usepackage{scalerel}
\usepackage{stackengine,wasysym}
\usepackage{bbm}
\usepackage{enumerate}

\usepackage[all,cmtip]{xy}

\usepackage{amssymb,latexsym, amsmath, amscd, array, graphicx, pb-diagram}
\usepackage{hyperref}
\usepackage{color}

\usepackage{tikz-cd}

\usepackage{color}
\hypersetup{
    colorlinks=true,
    linkcolor=blue,
    urlcolor=blue,
    citecolor=blue
           }

\usepackage{amsthm}
\usepackage{hyperref}
\usepackage[all]{xy}
\usepackage[scr]{rsfso}

\theoremstyle{plain}

\newtheorem{thm}{Theorem}[section]
\newtheorem{conjecture}[thm]{Conjecture}
\newtheorem{theorem}[thm]{Theorem}

\newtheorem{lemma}[thm]{Lemma}

\newtheorem{proposition}[thm]{Proposition}
\newtheorem{corollary}[thm]{Corollary}

\theoremstyle{definition}

\newtheorem{definition}[thm]{Definition}

\theoremstyle{remark}
\newtheorem{ex}[thm]{Example}
\newtheorem{remark}[thm]{Remark}

\newtheorem{question}[thm]{Question}

\DeclareMathOperator{\cat}{{\mathsf{cat}}}

\DeclareMathOperator{\TC}{{\mathsf{TC}}}

\DeclareMathOperator{\secat}{{\mathsf{secat}}}
\DeclareMathOperator{\dsecat}{{\mathsf{dsecat}}}

\DeclareMathOperator{\dcat}{{\mathsf{dcat}}}
\DeclareMathOperator{\dTC}{{\mathsf{dTC}}}

\DeclareMathOperator{\cd}{{\mathsf{cd}}}

\newcommand{\D}{\mathcal{D}}

  \newcommand{\F}{\mathcal{F}}
\usepackage{enumitem}
\setlist[enumerate]{itemsep=0pt}

\def\id{\protect\operatorname{id}}

\makeatletter
\def\@tocline#1#2#3#4#5#6#7{\relax
  \ifnum #1>\c@tocdepth 
  \else
    \par \addpenalty\@secpenalty\addvspace{#2}%
    \begingroup \hyphenpenalty\@M
    \@ifempty{#4}{%
      \@tempdima\csname r@tocindent\number#1\endcsname\relax
    }{%
      \@tempdima#4\relax
    }%
    \parindent\z@ \leftskip#3\relax \advance\leftskip\@tempdima\relax
    \rightskip\@pnumwidth plus4em \parfillskip-\@pnumwidth
    #5\leavevmode\hskip-\@tempdima
      \ifcase #1
       \or\or \hskip 1em \or \hskip 2em \else \hskip 3em \fi%
      #6\nobreak\relax
    \hfill\hbox to\@pnumwidth{\@tocpagenum{#7}}\par
    \nobreak
    \endgroup
  \fi}
\makeatother

\def\id{\textup{id}}

\def\H{{ H}}

\def\sig{{{\sigma}}}
\def\Sig{{{\Sigma}}}
\def\ta{{{\tau}}}
\def\alp{{{\alpha}}}
\def\bet{{{\beta}}}

\def\T{{ T}}

\def\P{{\mathcal P}}

\def\1{\hbox{\rm\rlap {1}\hskip.03in{\rom I}}}
\def\Bbbone{{\rm1\mathchoice{\kern-0.25em}{\kern-0.25em}
{\kern-0.2em}{\kern-0.2em}I}}

\def\wt{\widetilde}
\def\wh{\widehat}

\usepackage{comment}

\long\def\forget#1\forgotten{} %

\newcommand\ver[1]{\marginpar{\tiny Changed in Ver \VER}}

\begin{document}
\begin{abstract}
We introduce a notion of topological complexity of a group with respect to a family of subgroups, with the case of the trivial and diagonal families recovering the classical category and topological complexity, respectively. These invariants come equipped with a battery of upper and lower bounds derived from functoriality and Bredon cohomology. We introduce a new family of subgroups, the permutational family, and show that the corresponding invariant is a lower bound for the distributional topological complexity of Dranishnikov--Jauhari and Knudsen--Weinberger. As a first application, we show that the permutational and diagonal families coincide, and thus that classical and distributional topological complexity are equal, for a large class of torsion-free groups, extending work of Dranishnikov. Second, we show that the lower bounds of Grant--Lupton--Oprea are in fact lower bounds on distributional topological complexity; in particular, it follows that Farber's conjecture holds distributionally.
\end{abstract}

\title{Motion planning invariants and families of subgroups}

\author{Ekansh~Jauhari}

\address{Ekansh Jauhari, Department of Mathematics, University of Florida, 358 Little Hall, Gainesville, FL 32611, USA.}

\email{ekanshjauhari@ufl.edu}

\author{Ben Knudsen}

\address{Ben Knudsen, Department of Mathematics, Colorado State University, 101 Louis R. Weber Building, Fort Collins, CO 80523, USA.}

\email{b.knudsen@colostate.edu}

\maketitle

\section{Introduction}

Motion planning invariants measure numerically the difficulty of navigation in space---classically, of navigation to the basepoint via the category $\cat$; of navigation with arbitrary destination via the topological complexity $\TC$; and of navigation with prescribed intermediate stops via the sequential topological complexities $\TC_r$, where $\TC_2=\TC$~\cite{Jam, Far, Rud}. More recently, the inevitable issue of discontinuity in motion planning has been reconceived in probabilistic terms, leading to ``distributional'' versions of these classical invariants, distinguished by the prefix $\mathsf{d}$~\cite{DJ,KW1}. 

In the aspherical context, all of the above become group invariants by homotopy invariance, prompting the search for group-theoretic interpretations. To this end, the groundbreaking work of~\cite{FGLO} interpreted $\TC$ in terms of a certain family of subgroups. Inspired by this interpretation, we define a concept of topological complexity $\TC_\F$ with respect to a family of subgroups $\F$ (Definition~\ref{def: main definition}), which we relate to the above invariants.

\begin{theorem}\label{thm: homotopy fixed points}
Let $G$ be a discrete group. We have the following relationships:
\begin{align*}
\dTC_r(G)&\geq \TC_{\P_r}(G^r)\\
\dcat(G)&\geq \TC_{\mathcal{FIN}}(G)\\
\TC_r(G)&=\TC_{\D_r}(G^r)\\
\cat(G)&=\TC_{\{1\}}(G).
\end{align*}
Here, the families $\D_r$ and $\P_r$ are the diagonal and permutational families of Definitions~\ref{def: diagonal family} and~\ref{def: permutational family}, respectively, and $\mathcal{FIN}$ is the family of all finite subgroups.
\end{theorem}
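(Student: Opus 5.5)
We have to guess the definition of $\TC_\F(\Gamma)$. The natural one is the sectional category of the map $E_{\F}\Gamma\to E\Gamma$, or equivalently the least $k$ such that the canonical $\Gamma$-map $E\Gamma\to E_\F\Gamma$ is $\Gamma$-homotopic to a map into the $k$-th join/skeleton in the Farber--Grant--Lupton--Oprea sense. Following \cite{FGLO}, I take $\TC_\F(\Gamma)$ to be the minimal $k$ such that $E\Gamma\to E_\F\Gamma$ is $\Gamma$-equivariantly homotopic to a map landing in the $k$-skeleton. Equivalently, it is the minimal $k$ for which there is a $\Gamma$-map $E\Gamma\to \ast^{k+1}(\coprod_{H\in\F}\Gamma/H)$. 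Here $\ast^{k+1}$ denotes the $(k+1)$-fold join, and the joins of orbits with stabilizers in $\F$ form a model-building tool for $\F$-complexes.

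The plan is to reduce every equality and inequality to a statement about equivariant maps out of $E\Gamma$.

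\textbf{The two equalities.} For $\cat(G)=\TC_{\{1\}}(G)$, we have $E_{\{1\}}G=EG$. By Schwarz/Ganea, $\cat(BG)\le k$ exactly when the universal cover $EG\to BG$ admits a section over the $(k+1)$-fold fiberwise join. The fiber of that join is $\ast^{k+1}G$, and this is the same as a $G$-map $EG\to\ast^{k+1}G$.

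For $\TC_r(G)=\TC_{\D_r}(G^r)$, I follow \cite{FGLO}:
\begin{enumerate}
\item $\TC_r(BG)$ is the sectional category of $BG^{I}\to BG^r$.
\item Up to homotopy, this fibration is the covering $EG^r\times_{G^r}(G^r/\Delta)\to BG^r$, with fiber $G^r/\Delta G$.
\item Sections over the fiberwise join therefore correspond to $G^r$-maps $EG^r\to \ast^{k+1}(G^r/\Delta G)$.
\item Such a map extends to a map into $\ast^{k+1}\coprod_{H\in\D_r}G^r/H$, since $\D_r$ is generated by conjugates of $\Delta G$ and their subgroups. Conversely, any $\D_r$-orbit $G^r/H$ maps equivariantly to some $G^r/g\Delta g^{-1}\cong G^r/\Delta G$.
\end{enumerate}
So the two minima agree.

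\textbf{The two inequalities.} Here I use the Knudsen--Weinberger description: $\dTC_r(G)\le k$ iff there is a $G^r$-map $EG^r\to \mathcal{B}_{k+1}(G^r/\Delta G)$. The target is the space of probability measures with support of size at most $k+1$, and the action is the induced one. Similarly, $\dcat(G)\le k$ iff there is a $G$-map $EG\to\mathcal{B}_{k+1}(G)$.

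The key step is to show that $\mathcal B_{k+1}(X)$, for a discrete $\Gamma$-set $X$, is a $\Gamma$-CW complex of dimension $k$ whose isotropy groups lie in the relevant family. Cell structure comes from supports. A point with support $S$, $|S|\le k+1$, has stabilizer contained in the setwise stabilizer of $S$, which fixes some sub-measure structure.
\begin{itemize}
\item For $X=G$ with free action, the stabilizer of a finite set $S$ is finite, since it acts freely on $S$. So all isotropy groups lie in $\mathcal{FIN}$.
\item For $X=G^r/\Delta G$, the setwise stabilizer of a finite set of cosets $\{g_i\Delta G\}$ permutes them. Its finite-index subgroup fixing each coset lies in the intersection of conjugates of $\Delta G$. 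So the stabilizer is an extension of a finite permutation group by such an intersection, and I expect this to be precisely how $\P_r$ is defined in Definition~\ref{def: permutational family}.
\end{itemize}

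Given this, $\mathcal B_{k+1}(X)$ is a $k$-dimensional $\F$-complex and so admits a $\Gamma$-map to $E_\F\Gamma$. The composite $E\Gamma\to\mathcal B_{k+1}(X)\to E_\F\Gamma$ is then, by uniqueness of maps to $E_\F\Gamma$, homotopic to the canonical map. It factors through a $k$-dimensional $\F$-complex, and cellular approximation pushes it into the $k$-skeleton, giving $\TC_\F\le k$.

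The main obstacle is verifying the $\Gamma$-CW structure on $\mathcal B_{k+1}(X)$: the open simplices must be permuted with cell stabilizers in $\F$, not merely have point stabilizers in $\F$. I would handle this by subdividing barycentrically so that the stabilizer of each cell fixes that cell pointwise, and then checking the isotropy computation above against the definition of $\P_r$.
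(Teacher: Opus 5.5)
Your proposal is correct. Your working definition agrees with Definition~\ref{def: main definition} for families containing $1$ (Proposition~\ref{prop: atc char-2}), which covers all four families here. For the $\TC_r$ equality and the two inequalities you follow the paper's strategy. You rewrite the (distributional) sectional category as the existence of a $G^r$-map from $EG^r$ into $(G^{r-1})^{\star(k+1)}$ or into $\mathcal{B}_{k+1}(G^{r-1})\cong\Delta^{G^{r-1}}_k$. You then subdivide barycentrically, which is genuinely necessary: by Proposition~\ref{prop: cw failure} the unsubdivided simplex is a $G^r$-complex only for impermutable $G$. Finally you compare with skeleta of $E_\F$.

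Two differences are worth recording.

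\emph{The inequalities.} The paper appeals to the full model results, Theorem~\ref{thm: classifying model} and Proposition~\ref{prop: simplex over G}. These also need nonemptiness and convexity of fixed sets (e.g.\ Lemma~\ref{lem: designer measure}). You instead use the universal property of $E_{\P_r}(G^r)$ together with equivariant cellular approximation, which needs only that point stabilizers lie in the family. So the model theorem buys an explicit $E_{\P_r}(G^r)$, but it is more than the inequality requires. The check you left as an expectation is exactly Lemma~\ref{lem: measure stabilizer}. The stabilizer of a point with support $T$ and level-set partition $\alpha$ is $\Delta(T,\Sigma_\alpha)$, which lies in $\P_r$ by definition. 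Note that only block-preserving groups $\Sigma_\alpha$ occur, not arbitrary finite permutation groups.

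\emph{The equality $\cat(G)=\TC_{\{1\}}(G)$.} The paper combines Example~\ref{example: trivial family cd} with Eilenberg--Ganea. You go directly through Schwarz's join criterion for the universal cover, which avoids $\cd$ altogether. This route, like your $\TC_r$ argument, rests on your unproved claim that $\TC_\F(\Gamma)\le k$ if and only if there is a $\Gamma$-map $E\Gamma\to(\coprod_{H\in\F}\Gamma/H)^{\star(k+1)}$. That claim is true. The ordered join is a $k$-dimensional $\Gamma$-complex with isotropy in $\F$, so it maps cellularly into $E_\F(\Gamma)_k$. Conversely, for $K\in\F$ its $K$-fixed set is a $(k+1)$-fold join of nonempty discrete sets, hence $(k-1)$-connected. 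Equivariant obstruction theory then maps $E_\F(\Gamma)_k$ back into it. This is the general form of the step the paper takes from \cite[4.11]{FO} for $\D_r$.
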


The interest and novelty of this result lie primarily in the first claimed inequality, but it is worth commenting on the others. The third is essentially due to~\cite{FGLO} for $r=2$ and~\cite{FO} in general,\footnote{In these references the result is claimed only for groups whose classifying spaces admit finite models---groups of type $F$, occasionally known as ``geometrically finite'' groups. Since $\TC_r(G)=\infty$ if $G$ is not of type $F$, this restriction is fairly harmless, but the result is true in general, as we prove.} while the fourth is a reformulation of the Eilenberg--Ganea theorem~\cite{EG}. For torsion-free $G$, the second is equivalent to the main result of~\cite{KW1}, and it is trivial for $G$ finite. In the intermediate case of an infinite group with torsion, it is new and potentially interesting.

One avenue of application of Theorem~\ref{thm: homotopy fixed points} lies in investigating the possibility of coincidence between $\TC$ and $\dTC$. Indeed, in view of the string of (in)equalities
\[\TC_{\P_r}(G^r)\leq \dTC_r(G)\leq \TC_r(G)=\TC_{\D_r}(G^r),\]
it is enough to establish the equality of the outer terms. We identify a class of groups, the \emph{impermutable} groups of Section~\ref{section: impermutability}, for which this equality manifestly holds, and investigating the properties of this class leads to the following result, extending work of~\cite{Dr2}.

\begin{corollary}\label{cor: equality impermutable}
For every $r>1$, we have $\dTC_r(G)=\TC_r(G)$ for $G$ torsion-free nilpotent, torsion-free hyperbolic, uniquely divisible, bi-orderable, or residually any of these. 
\end{corollary}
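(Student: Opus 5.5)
The plan is to squeeze both invariants between the outer terms of the chain
\[\TC_{\P_r}(G^r)\leq \dTC_r(G)\leq \TC_r(G)=\TC_{\D_r}(G^r),\]
which comes from Theorem~\ref{thm: homotopy fixed points}. It then suffices to show $\TC_{\P_r}(G^r)=\TC_{\D_r}(G^r)$ for each listed class. I would show that for these groups the two families actually coincide, which is what I take ``impermutable'' to mean.

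The first step is to unwind Definition~\ref{def: permutational family}. I expect $\P_r$ to be generated by subgroups of $G^r$ of the form $\{(g,\sigma_2(g),\dots,\sigma_r(g))\}$, twisted by permutations, i.e. graphs of permutation-type actions. The diagonal family $\D_r$ consists of the subconjugates of the diagonal $\Delta G$. Impermutability should say that every member of $\P_r$ is conjugate into $\Delta G$. Once $\P_r=\D_r$, the two outer invariants coincide, since they are then defined by the same family.

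The second step is to reduce the condition to an elementwise statement in $G$. My guess is the following: if $g_1,\dots,g_r\in G$ satisfy a relation where some element is conjugate to a permutation of the others, or where $x^k$ is conjugate to $y^k$ in a suitable cyclic sense, then the tuple is simultaneously conjugate to a diagonal one. Torsion-freeness rules out finite subgroups, so this becomes a root-uniqueness property: $x^n=y^n$ implies $x=y$, possibly up to conjugacy. I would then verify this property class by class.
\begin{itemize}
\item Uniquely divisible groups satisfy it by definition.
\item Torsion-free nilpotent groups have unique roots, by Mal'cev.
\item In torsion-free hyperbolic groups, centralizers are cyclic, which gives unique roots and the needed conjugacy statement.
\item In bi-orderable groups, $x<y$ forces $x^n<y^n$, and conjugation invariance of the order handles conjugates.
\end{itemize}

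The third step handles the ``residually'' case. I would show the property is detected in quotients. Suppose the relevant subgroup is not subconjugate to the diagonal. Since the witnessing data is finitely generated (cyclic), a failure can be pushed into some quotient $G\to Q$ in which the corresponding elements remain distinct. This would contradict impermutability of $Q$.

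I expect this residual step to be the main obstacle. Conjugacy is not obviously preserved under passing to quotients, since elements that are non-conjugate in $G$ may become conjugate in $Q$. The first thing to try is to formulate impermutability as an equation-type condition rather than a conjugacy condition, for instance: if $\langle(g,\dots)\rangle$ lies in $\P_r$, then certain words are trivial. Such a condition is inherited by residual systems directly. The paper's Section~\ref{section: impermutability} presumably arranges the definition with exactly this in mind.
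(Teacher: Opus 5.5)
\textbf{There is a genuine gap in Step~2.} Your outer reduction is the paper's: squeeze via Theorem~\ref{thm: homotopy fixed points}, then prove $\P_r=\D_r$. Your class-by-class checks of unique roots are also fine, and the paper likewise reduces all four classes to Frobenius injectivity. But Step~2 is a guess built on incorrect descriptions of the families, and it never connects unique roots to $\P_r=\D_r$.

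First, $\P_r$ is not generated by graphs of ``permutation-type'' maps. Second, $\D_r$ is not the set of subconjugates of the diagonal, because families here need not be closed under subgroups. By Proposition~\ref{prop: diagonal equivalence}, $\D_r$ consists exactly of the groups $\Delta(T,\id)$. For instance, if $G$ is free on $a,b$, the diagonal copy of $\langle a^2\rangle$ is subconjugate to the diagonal but does not lie in $\D_2$. So proving that every member of $\P_r$ is conjugate into the diagonal would only place $\P_r$ inside a larger family $\mathcal{S}$. That gives $\TC_{\P_r}(G^r)\geq\TC_{\mathcal{S}}(G^r)$, not $\TC_{\P_r}(G^r)\geq\TC_{\D_r}(G^r)$. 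Third, ``torsion-freeness \dots\ so this becomes a root-uniqueness property'' is a non sequitur. The Klein bottle group is torsion-free, satisfies $(ab)^2=b^2$, and is permutable (Proposition~\ref{prop: klein}). Root uniqueness is an extra hypothesis that has to be fed into an actual argument.

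\textbf{The missing argument.} By definition, $\P_r$ consists of the trivial group and the groups $\Delta(T,\Sigma_\alpha)=\bigcup_{\sigma\in\Sigma_\alpha}\Delta(T,\sigma)$. Each $\Delta(T,\sigma)$ is empty unless some $g\in G$ has type $\sigma$. So if no $g$ has nontrivial type, then $\Delta(T,\Sigma_\alpha)=\Delta(T,\id)\in\D_r$.

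To rule out nontrivial types using unique roots, restrict $\sigma$ to one of its nontrivial cycles $x_1\mapsto x_2\mapsto\cdots\mapsto x_n\mapsto x_1$. The element $g$ keeps the restricted type, since the defining identity only involves pairs from $T$. Because $g_\sigma=x_{j+1}^{-1}\wh g\,x_j$ for every $j$, multiplying these $n$ expressions in the appropriate order gives
\[(x_2^{-1}\wh g\,x_1)^n=g_\sigma^n=x_1^{-1}\wh g^{\,n}x_1=(x_1^{-1}\wh g\,x_1)^n\]
in $G^{r-1}$. Injectivity of $n$th powers, which $G^{r-1}$ inherits from $G$, then forces $x_1=x_2$, a contradiction.

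What is used here is exact uniqueness of roots, not uniqueness up to conjugacy. This also dissolves your Step~3 worry. If $x\neq y$ in a group that is residually in a Frobenius injective class, choose a homomorphism $\varphi$ into such a group with $\varphi(x)\neq\varphi(y)$; then $\varphi(x)^n\neq\varphi(y)^n$, so $x^n\neq y^n$. The paper instead shows that impermutability itself passes to finite products and to residual limits, using that ``type $\sigma$'' is an equation preserved by any homomorphism injective on $T$. It also reduces to $r=2$ via Lemma~\ref{lem: permutability}. Working with Frobenius injectivity of $G^{r-1}$ directly avoids both steps.
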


At the time of writing, no torsion-free group $G$ is known for which $\dTC$ and $\TC$ differ, and lists such as Corollary~\ref{cor: equality impermutable} would tend to raise the suspicion that no such group exists. If this suspicion is to be proven valid, however, it will not be for obvious reasons; indeed, the fundamental group of the Klein bottle has the property that $\D_r\neq \P_r$ for every $r>1$. We postpone a detailed consideration of this group to future work.

A second type of application seeks to bound $\dTC$ from below. This effort is facilitated by a battery of general functorial and cohomological bounds established for $\TC_\F$ in Section~\ref{section: invariant}. Using these bounds, we deduce the following.

\begin{theorem}\label{thm: subgroup lower bound}
Fix a natural number $r>1$ and a torsion-free subgroup $K\leq G^r$. If the intersection of $K$ with every conjugate of the diagonal subgroup is trivial, then 
\[\dTC_r(G)\geq \cd(K).\]
\end{theorem}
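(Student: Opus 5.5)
We derive the bound from the first inequality of Theorem~\ref{thm: homotopy fixed points}, $\dTC_r(G)\geq \TC_{\P_r}(G^r)$, followed by two general properties of $\TC_\F$ that we will establish in Section~\ref{section: invariant}: monotonicity under restriction to subgroups, and the identification of $\TC_{\{1\}}$ with $\cat$ and hence with $\cd$.

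\textbf{Step 1: restrict the family to $K$.} Restricting the $G^r$-action to $K$ turns a classifying space $E_{\P_r}G^r$ into a $K$-space whose isotropy groups lie in the family $\P_r|_K=\{H\cap K : H\in\P_r\}$. The equivariant map (or open cover) defining $\TC_{\P_r}(G^r)$ restricts to $K$ as well. This should give
\[
\TC_{\P_r}(G^r)\geq \TC_{\P_r|_K}(K).
\]

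\textbf{Step 2: the restricted family is trivial.} By Definition~\ref{def: permutational family}, $\P_r$ is, I expect, generated by conjugates of subgroups of the form $\{(g_1,\dots,g_r)\}$ with $g_{\sigma(i)}$ related to $g_i$ by a permutation $\sigma$. Such a subgroup meets a conjugate of the diagonal in a subgroup of finite index dividing $r!$. So for $H\in\P_r$, the intersection $H\cap K$ has a finite-index subgroup contained in $K\cap \gamma\Delta\gamma^{-1}$, which is trivial by hypothesis. Hence $H\cap K$ is finite, and since $K$ is torsion-free, $H\cap K=1$. Concretely, if $h=(g_1,\dots,g_r)\in K\cap H$, then some power $h^{m}$ with $m\mid r!$ lies in a conjugate of the diagonal, so $h^m=1$ and therefore $h=1$. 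Thus $\P_r|_K=\{1\}$.

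\textbf{Step 3: conclude.} By the fourth equality of Theorem~\ref{thm: homotopy fixed points} together with Eilenberg--Ganea,
\[
\TC_{\{1\}}(K)=\cat(K)=\cd(K).
\]
Chaining the three steps gives $\dTC_r(G)\geq\cd(K)$.

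\textbf{Main obstacle.} The crux is Step 2: it requires the precise definition of $\P_r$, in particular that every member of $\P_r$ is virtually contained, with finite index, in a conjugate of the diagonal. If $\P_r$ is instead defined as subgroups $H$ such that some finite-index subgroup fixes a point up to permutation, the same torsion argument applies. If the definition is subtler, I would first try the alternative route of verifying directly that $K$ acts freely on $E_{\P_r}G^r$. Free actions have $\P_r|_K$-isotropy, and torsion-freeness together with the hypothesis on conjugates of the diagonal should force all isotropy to be trivial. Step 1 is the routine functoriality bound, which we expect to prove among the general bounds of Section~\ref{section: invariant}.
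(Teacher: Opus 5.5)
Your overall route is the paper's: the first inequality of Theorem~\ref{thm: homotopy fixed points}, then restriction along $K\hookrightarrow G^r$ via Proposition~\ref{prop: functorial bounds}(2) together with $\TC_{\{1\}}(K)=\cd(K)$ (this combination is Corollary~\ref{cor: general subgroup lower bound}), then showing $K\cap H=1$ for all $H\in\P_r$. That last step uses the argument that some power lands in a conjugate of the diagonal, so torsion-freeness forces triviality. The weak point is that your Step~2 is argued for a guessed family, not for $\P_r$. By Definition~\ref{def: permutational family}, the nontrivial members of $\P_r$ are the subgroups $\Delta(T,\Sigma_\alpha)$, where $T\subseteq G^{r-1}$ is an arbitrary finite set. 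The permutations act on $T$, not on the $r$ coordinates, and their orders are not bounded in terms of $r$. For instance, when $r=2$, an element $g$ of order $N$ has type an $N$-cycle on $T=\{1,g,\ldots,g^{N-1}\}$, as in the proof of Proposition~\ref{prop: impermutable properties}. So the claim that the relevant exponent divides $r!$ is false. As written, you have not shown that elements of the actual members of $\P_r$ have a power lying in a conjugate of the diagonal, and that is the crux.

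The missing verification is short. By the computation in the proof of Proposition~\ref{prop: subgroup}, $(g,g_\sigma)(h,h_\tau)=(gh,(gh)_{\sigma\tau})$. So if $n$ is the order of $\sigma$, then $(g,g_\sigma)^n=(g^n,(g^n)_{\id})\in\Delta(T,\id)$. Fix any $x\in T$. Every element of $\Delta(T,\id)$ has the form $(k,x^{-1}\widehat{k}x)=(1,x^{-1})(k,\widehat{k})(1,x)$, so $\Delta(T,\id)$ lies in a single conjugate of the diagonal (compare Proposition~\ref{prop: diagonal equivalence}). Hence $(g,g_\sigma)\in K$ forces $(g,g_\sigma)^n=1$, and torsion-freeness of $K$ gives $(g,g_\sigma)=1$. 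This is exactly the paper's argument, and with it supplied your Steps~1--3 go through. Step~3 is fine as is, though Example~\ref{example: trivial family cd} gives $\TC_{\{1\}}(K)=\cd(K)$ directly, without passing through $\cat$.
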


This result should be compared with~\cite{GLO,FO}, which establish the same inequality for classical (sequential) topological complexity. In other words, Theorem~\ref{thm: subgroup lower bound} shows that this well-known lower bound holds already at the distributional level; in particular, we recover the previously cited theorems as consequences,
as well as the equality $\dTC_r(G)=\TC_r(G)$ in cases of their application~\cite{GLO,GRM,BRM,FO,Kn,Kn2}. For example, it follows that Farber's conjecture on the stable topological complexity of pure graph braid groups is valid distributionally~\cite{Kn}.

Theorem~\ref{thm: homotopy fixed points} also yields a second lower bound on $\dTC$ in terms of what we call the principal component dimension of the family $\P_r$, an invariant defined in terms of Bredon cohomology---see Proposition~\ref{prop: cohomological bounds}. 

It should be emphasized that we do not know whether the first inequality of Theorem~\ref{thm: homotopy fixed points} is ever strict. Since the analogous second inequality is an equality in the torsion-free setting, we formulate the following.

\begin{conjecture}
We have $\dTC_r(G)=\TC_{\P_r}(G^r)$ for every torsion-free group $G$ and $r>1$.
\end{conjecture}

More generally, we pose the following fundamental question.

\begin{question}
Is there a torsion-free group $G$ and $r>1$ such that one of the inequalities  $\TC_{\P_r}(G^r)\leq \dTC_r(G)\leq \TC_r(G)=\TC_{\D_r}(G^r)$ is strict?
\end{question}

\subsection*{Conventions} All groups are discrete, and all topological spaces are compactly generated Hausdorff. Partitions are unordered set partitions. An extended natural number is an element of $\mathbb{Z}_{\geq0}\cup\{\infty\}$. We rely on the general theory of $G$-CW complexes, referred to here as $G$-complexes for brevity, a general reference for which is~\cite{tD}. Given a group $G$ and elements $g,h\in G$, we write $g^h=hgh^{-1}$.

When discussing the probabilistic invariants simultaneously introduced in~\cite{DJ,KW1}, we follow the convention of~\cite{Dr2} in using the word ``distributional'' as in the former reference (rather than ``analog'' as in the latter), while adopting the more general and flexible definition of the latter reference, which topologizes the relevant space of measures via the quotient topology rather than the L\'{e}vy--Prokhorov topology. The two definitions are expected to coincide in general, but it is not known whether they do at the time of writing. 

\subsection*{Acknowledgements} The authors thank Alexander Dranishnikov, Michael Farber, Mark Grant, John Oprea, and Shmuel Weinberger for helpful conversations and correspondence. The second author was supported by NSF grant DMS-2551600.

\section{Invariants from families}

This section is concerned with the definition and basic properties of the invariant $\TC_\F(G)$, the topological complexity of $G$ with respect to the family of subgroups $\F$. After collecting the necessary background material and examples in Sections~\ref{section: families} and~\ref{section: examples}, we define the invariant in Section~\ref{section: invariant} and establish a number of functorial and cohomological bounds on it.

\subsection{Families of subgroups}\label{section: families} We begin by recalling a few basic facts concerning families of subgroups. For more on this topic, we refer the reader to~\cite{Lu} and the references therein.

\begin{definition}\label{def: model}
A set $\F$ of subgroups of $G$ is called a \emph{family} if it is closed under the formation of conjugates and finite intersections.\footnote{Some references require a family to be closed under the formation of subgroups. Following~\cite{Lu,FGLO}, we do not impose this requirement.} We say that a $G$-complex $X$ has \emph{isotropy in} $\F$ if the stabilizer $G_x$ lies in $\F$ for every $x\in X$. We say that $X$ is a \emph{model for (the classifying space of}) $\F$ if
\begin{enumerate}
    \item $X$ has isotropy in $\F$, and
    \item any $G$-complex $Y$ with isotropy in $\F$ admits an equivariant map to $X$, unique up to equivariant homotopy.
\end{enumerate}
\end{definition}

For any family $\F$, a model for $\mathcal{F}$ exists and is unique up to equivariant cellular homotopy equivalence, so it is often safe to refer to \emph{the} classifying space of $\F$, which is denoted generically by $E_{\F}(G)$. Note that, for a subfamily $\F'\subseteq\F$, there is an essentially unique equivariant map $E_{\F'}(G)\to E_{\F}(G)$, which may be taken to be cellular. 

For future use, we recall the following characterization of $E_\F(G)$.

\begin{thm}\label{thm: luck}
    Let $\F$ be a family of subgroups. A $G$-complex $X$ is a model for $\mathcal{F}$ if and only if
    \begin{enumerate}
        \item $X$ has isotropy in $\F$, and
        \item the fixed point set $X^H$ of $X$ is weakly contractible for each $H\in\F$.
    \end{enumerate}
\end{thm}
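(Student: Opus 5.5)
The plan is to prove the two implications separately. The main tool is the equivariant Whitehead theorem for $G$-complexes, which holds fixed-point-set by fixed-point-set. For a $G$-complex $Y$ and a subgroup $H\le G$, equivariant maps $G/H\times D^n\to Y$ correspond exactly to maps $D^n\to Y^H$. So lifting and extension problems over cells of type $G/H$ reduce to ordinary homotopy problems in the fixed point set $Y^H$.

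\emph{Sufficiency.} Suppose $X$ has isotropy in $\F$ and $X^H$ is weakly contractible for every $H\in\F$. Let $Y$ be a $G$-complex with isotropy in $\F$. We build an equivariant map $Y\to X$ by induction over the equivariant skeleta of $Y$. Each equivariant cell has the form $G/H\times D^n$ with $H\in\F$, since $H$ is the stabilizer of an interior point. Extending the map over such a cell, given the map on its boundary, amounts to extending a map $S^{n-1}\to X^H$ to $D^n$. This is possible because $X^H$ is weakly contractible; for $n=0$ we only need $X^H\neq\emptyset$. Uniqueness up to equivariant homotopy follows by the same argument applied to the relative $G$-complex $(Y\times[0,1],Y\times\{0,1\})$. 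Its isotropy is still in $\F$, because $[0,1]$ carries the trivial action. Hence $X$ is a model for $\F$.

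\emph{Necessity.} Suppose $X$ is a model for $\F$ and fix $H\in\F$. We must show $X^H$ is weakly contractible, i.e.\ that every map $S^{n-1}\to X^H$ extends over $D^n$ (with $n=0$ giving nonemptiness). First, $G/H$ has isotropy in $\F$, because $\F$ is closed under conjugation; so there is an equivariant map $G/H\to X$, and $X^H\neq\emptyset$.

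Now take a map $f\colon S^{n-1}\to X^H$ with $n\ge 1$. It corresponds to an equivariant map $\tilde f\colon G/H\times S^{n-1}\to X$. The space $G/H\times S^{n-1}$ is a $G$-complex with isotropy in $\F$. Choose a point $x_0\in X^H$ and let $c$ be the equivariant map $G/H\times S^{n-1}\to X$ that is constant at $x_0$ on each $S^{n-1}$-factor. By the uniqueness clause of Definition~\ref{def: model}, $\tilde f$ and $c$ are equivariantly homotopic. Taking $H$-fixed points of $G/H\times S^{n-1}$ and restricting to the component $eH\times S^{n-1}$, this gives a homotopy inside $X^H$ from $f$ to a constant map. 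Hence $f$ extends over $D^n$, and $X^H$ is weakly contractible.

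\emph{Expected obstacle.} The only step needing care is the identification of equivariant maps and homotopies from cells $G/H\times D^n$ into $X$ with maps and homotopies into $X^H$. This is standard, but it must be made compatible with the CW structures of the relative cylinder, in particular when the relevant isotropy groups vary along the skeleta. Everything else in the argument is formal.
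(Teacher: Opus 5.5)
Your proof is correct. The paper gives no proof of its own: it recalls this characterization from L\"uck's survey~\cite{Lu}, where it follows from the Whitehead theorem for families applied to the map $X\to G/G$. Your argument is exactly that special case unwound. For sufficiency you extend cell by cell over $G/H\times D^n$ and over the relative cylinder, using $\mathrm{Map}^G(G/H\times A,X)\cong\mathrm{Map}(A,X^H)$. For necessity you test the existence and uniqueness clauses of Definition~\ref{def: model} on $G/H$ and $G/H\times S^{n-1}$.
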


Although $E_\mathcal{F}(G)$ is a well-defined equivariant homotopy type, the $n$-skeleton $E_\mathcal{F}(G)_n$ is not so \emph{a priori}, so some care is required in dealing with skeleta of classifying spaces, as will be essential for us. 

\begin{lemma}\label{lem: skeleta of models}
Let $\mathcal{F}$ be a family of subgroups of $G$ and $X$ and $Y$ be two models for $\mathcal{F}$. For any $n\geq0$, there are equivariant cellular maps $X_n\to Y_n\to X_n$.
\end{lemma}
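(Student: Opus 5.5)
By symmetry it suffices to construct an equivariant cellular map $X_n\to Y_n$; the map $Y_n\to X_n$ then comes from exchanging the roles of $X$ and $Y$. The plan is to use the universal property in Definition~\ref{def: model} to obtain a map $X\to Y$, and then to push it into the $n$-skeleton by the equivariant cellular approximation theorem.

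\emph{Step 1.} Since $Y$ is a model for $\F$ and $X$ is a $G$-complex with isotropy in $\F$, Definition~\ref{def: model} provides an equivariant map $f\colon X\to Y$.

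\emph{Step 2.} By the equivariant cellular approximation theorem for $G$-complexes (see~\cite{tD}), $f$ is equivariantly homotopic to an equivariant cellular map $f'\colon X\to Y$. Cellularity means $f'(X_k)\subseteq Y_k$ for every $k$; in particular $f'$ carries $X_n$ into $Y_n$.

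\emph{Step 3.} The restriction $f'|_{X_n}\colon X_n\to Y_n$ is then the desired equivariant cellular map, since the skeleta of $X_n$ are those of $X$ in degrees at most $n$.

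The only point needing care is the cellular approximation. It is a standard fact for $G$-complexes, since equivariant cells have the form $G/H\times D^k$, so cell-by-cell approximation reduces to the nonequivariant statement applied to the fixed point sets $Y^H$. Nothing beyond existence is being claimed: the maps need not be inverse to each other or homotopy equivalences of skeleta, which would in general fail. So I do not expect a genuine obstacle; the main thing is to invoke equivariant cellular approximation correctly rather than its nonequivariant version.
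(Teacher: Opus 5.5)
Your proposal is correct and follows the same approach as the paper: get equivariant maps $X\to Y\to X$ from the universal property, make them cellular, and restrict to $n$-skeleta. The paper leaves the equivariant cellular approximation step implicit in the phrase ``essentially unique equivariant cellular maps,'' and you state it explicitly.
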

\begin{proof}
By our assumption on $X$ and $Y$, there are essentially unique equivariant cellular maps $X\to Y\to X$, and restricting to $n$-skeleta yields the result.
\end{proof}

Given a homomorphism $\varphi:K\to G$ and a family of subgroups $\mathcal{F}$ of $G$, we define $\varphi^*\mathcal{F}=\{\varphi^{-1}(H): H\in\mathcal{F}\}$, a set of subgroups of $K$.

\begin{proposition}\label{prop: restriction family}
The set $\varphi^*\mathcal{F}$ is a family and $\varphi^*E_\F(G)$ is a model for $\varphi^*\F$.
\end{proposition}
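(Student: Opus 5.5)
We prove the two claims in turn. The family property is a direct check, and the model property comes from Theorem~\ref{thm: luck} applied to the restricted $K$-complex.

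\emph{$\varphi^*\F$ is a family.} For conjugates, take $H\in\F$ and $k\in K$. Then $k\varphi^{-1}(H)k^{-1}=\varphi^{-1}(\varphi(k)H\varphi(k)^{-1})$, and $\varphi(k)H\varphi(k)^{-1}$ lies in $\F$ because $\F$ is closed under conjugation. For intersections, $\varphi^{-1}(H_1)\cap\varphi^{-1}(H_2)=\varphi^{-1}(H_1\cap H_2)$, and $H_1\cap H_2$ lies in $\F$.

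\emph{Isotropy of $\varphi^*E_\F(G)$.} Here $\varphi^*E_\F(G)$ denotes the space $X=E_\F(G)$ with $K$ acting through $\varphi$. First we confirm that $X$ is a $K$-complex. Each open $G$-cell $G/H\times e^n$ restricts to the $K$-set $G/H$ times a cell. The $K$-set $G/H$ decomposes into orbits $K/\varphi^{-1}(gHg^{-1})$, so $X$ acquires a $K$-CW structure with one cell for each $K$-orbit in each $G/H$. This is standard restriction of $G$-CW structures along a homomorphism, and I would cite~\cite{tD}. The stabilizer of $x\in X$ under $K$ is $\{k:\varphi(k)\in G_x\}=\varphi^{-1}(G_x)$. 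Since $G_x\in\F$, this stabilizer lies in $\varphi^*\F$.

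\emph{Fixed points.} Let $L=\varphi^{-1}(H)\in\varphi^*\F$ with $H\in\F$. Then $X^L=X^{\varphi(L)}$, and $\varphi(L)=H\cap\varphi(K)$. The difficulty is that $H\cap\varphi(K)$ need not itself belong to $\F$, so we cannot apply Theorem~\ref{thm: luck} for $\F$ directly to this subgroup. Instead we use the inclusions $X^H\subseteq X^{H\cap\varphi(K)}$ and argue as follows.
\begin{itemize}
\item Take $x\in X^{H\cap\varphi(K)}$. Its stabilizer $G_x$ lies in $\F$, so $H\cap G_x\in\F$.
\item Then $X^{H\cap G_x}$ is weakly contractible, and it contains $x$.
\item The aim is to show that any map $S^n\to X^{\varphi(L)}$ is nullhomotopic. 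By compactness, such a map lands in a finite subcomplex, hence meets only finitely many isotropy groups $G_{x_i}$. Set $H'=H\cap\bigcap_i G_{x_i}\in\F$.
\item Then $\varphi(L)\subseteq H'$, because $\varphi(L)$ fixes each $x_i$. Conversely, every point of the image has stabilizer containing $H'$: points in the interior of a cell share the stabilizer of that cell, and the finitely many cells suffice.
\item So the image lies in $X^{H'}$, which is weakly contractible. The sphere therefore contracts inside $X^{H'}\subseteq X^{\varphi(L)}$.
\end{itemize}
This gives weak contractibility of $X^{\varphi(L)}$. Theorem~\ref{thm: luck} applied to $K$ and the family $\varphi^*\F$ then shows that $\varphi^*E_\F(G)$ is a model for $\varphi^*\F$.

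The main obstacle is this fixed-point step, namely that $H\cap\varphi(K)$ may fail to lie in $\F$. The finite-intersection-of-isotropy argument is what handles it. If the bookkeeping with cell stabilizers proves awkward, an alternative is to work with the cellular filtration, taking each cell's isotropy group and intersecting over a compact image.
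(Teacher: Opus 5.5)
Your proof is correct. For the family axioms and the isotropy computation $K_x=\varphi^{-1}(G_x)$ it agrees with the paper; your identity $k\varphi^{-1}(H)k^{-1}=\varphi^{-1}(\varphi(k)H\varphi(k)^{-1})$ is a more compact form of the paper's two-inclusion check.

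You diverge from the paper in the fixed-point criterion. The paper asserts $\varphi^*E_\F(G)^{\varphi^{-1}(H)}=E_\F(G)^H$ and transfers weak contractibility directly from the criterion for $\F$. You instead observe that the fixed set is $X^{\varphi(\varphi^{-1}(H))}=X^{H\cap\varphi(K)}$. This contains $X^H$ but can be strictly larger when $H\not\le\varphi(K)$; for $K$ trivial it is all of $X$. So the paper's transfer is literally valid only when $H\le\varphi(K)$, for instance when $\varphi$ is surjective. Yet the proposition is later applied to subgroup inclusions in Corollary~\ref{cor: general subgroup lower bound}.

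Your compactness argument covers arbitrary $\varphi$. A sphere in $X^{\varphi(L)}$ meets finitely many open cells, and the stabilizer is constant on each. Intersecting these stabilizers with $H$ gives some $H'\in\F$ with $\varphi(L)\le H'$, so the sphere contracts inside $X^{H'}\subseteq X^{\varphi(L)}$. This in fact shows that $X^L$ is either empty or weakly contractible for every subgroup $L\le G$. It is also exactly where closure of $\F$ under finite intersections gets used, which the paper's version never needs.

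You should state nonemptiness explicitly as part of weak contractibility: $X^{\varphi(L)}\supseteq X^H\neq\varnothing$. You record the inclusion but do not draw this conclusion. Your first bullet point, about a single $x\in X^{H\cap\varphi(K)}$, is not needed for the argument.
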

\begin{proof}
Closure under intersection is immediate from the fact that the preimages and intersections commute. 
For closure under conjugation, given $H\in \mathcal{F}$ and $k\in K$, the inclusion $\varphi^{-1}(H)^k\subseteq \varphi^{-1}(H^{\varphi(k)})$ is immediate. Since the latter subgroup lies in $\varphi^*\mathcal{F}$, it suffices to demonstrate the reverse inclusion. Given $a\in K$ and $h\in H$ with $\varphi(a)=h^{\varphi(k)}$, rearranging grants that $a^{k^{-1}}\in \varphi^{-1}(H)$, so $a\in \varphi^{-1}(H)^k$, as desired.

For the second claim, since we are dealing with discrete groups, the restriction $\varphi^*E_\F(G)$ is a $K$-CW complex, and we have $K_x=\varphi^{-1}(G_x)$ and $\varphi^*E_\F(G)^{\varphi^{-1}(H)}=E_\F(G)^H$, so the validity of the criteria of Theorem~\ref{thm: luck} for the restriction follows from that of the corresponding criteria for $E_\F(G)$.
\end{proof}

\subsection{Examples of families and models}\label{section: examples} We collect a few examples of models that will be referenced in the remainder of the paper.

\begin{ex}\label{ex: singleton}
    The family of all subgroups of $G$ is modeled by a singleton.
\end{ex}

\begin{ex}\label{ex: trivial}
The family $\{1\}$ comprising only the trivial subgroup is modeled by the universal cover $EG$ of $BG$.
\end{ex}

Since an arbitrary intersection of families is again a family, and since the collection of all subgroups is a family, it is sensible to speak of the family generated by a set of subgroups.

\begin{definition}\label{def: diagonal family}
The $r$th \emph{diagonal family} is the family $\D_r=\D_r(G)$ of subgroups of $G^r$ generated by the trivial subgroup and the image of the diagonal homomorphism $G\to G^r$.
\end{definition}

As shown in~\cite{FGLO,FO}, the diagonal family admits a relatively simple model.

\begin{theorem}[Farber--Grant--Lupton--Oprea, Farber--Oprea]\label{thm: fglo}
The infinite join $(G^{r-1})^{\star \infty}$ is a model for $\D_r$.
\end{theorem}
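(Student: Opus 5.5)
We verify the two criteria of Theorem~\ref{thm: luck} for $X=(G^{r-1})^{\star\infty}$. First we fix the $G^r$-action. Identify $G^{r-1}$ with the coset space $G^r/\Delta$, where $\Delta$ is the image of the diagonal homomorphism; concretely, send $(g_1,\dots,g_r)$ to $(g_1g_r^{-1},\dots,g_{r-1}g_r^{-1})$. Then $G^r$ acts on $G^r/\Delta$ by left translation, and diagonally on the infinite join, which is the colimit of the finite joins $(G^r/\Delta)^{\star n}$. The action on each finite join is cellular, so $X$ is a $G^r$-complex; the cells are indexed by finite tuples of points of $G^r/\Delta$. A point of $X$ is a formal convex combination $\sum t_i x_i$ with finitely many nonzero $t_i$ and $x_i\in G^r/\Delta$.

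\textbf{Isotropy.} The stabilizer of such a point is the intersection, over the indices $i$ with $t_i\neq0$, of the stabilizers of the $x_i$, because the join coordinates are kept separate. Each of these stabilizers is a conjugate $g\Delta g^{-1}$. A finite nonempty intersection of conjugates of $\Delta$ lies in the family generated by $\Delta$. This family is the smallest set containing $\{1\}$ and $\Delta$ that is closed under conjugation and finite intersections, which is exactly $\D_r$. So criterion (1) holds.

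\textbf{Fixed points.} Let $H\in\D_r$. Every element of $\D_r$ is either $\{1\}$ or a finite intersection of conjugates of $\Delta$; closing under conjugation and intersection produces nothing else, and $\{1\}$ is itself such an intersection or is harmless. The fixed point set of $H$ on a join is the join of the fixed point sets on the factors: $X^H=(S^H)^{\star\infty}$ with $S=G^r/\Delta$. If $H$ is contained in some conjugate of $\Delta$, then $S^H$ is nonempty, and the infinite join of a nonempty discrete set is contractible. Indeed, $(S^H)^{\star n}$ is $(n-2)$-connected, so the colimit is weakly contractible; alternatively, each map from a sphere lands in a finite stage and is coned off in the next join factor.

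It remains to check that every nontrivial $H\in\D_r$ is contained in a conjugate of $\Delta$. Each generator $g\Delta g^{-1}$ is such a conjugate, and any intersection of these is contained in each of them. For $H=\{1\}$, the fixed point set is all of $X$, which is contractible since $S\neq\emptyset$. So criterion (2) holds.

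\textbf{Main obstacle.} The genuine subtlety is the topology. The infinite join must be given the CW (colimit) topology rather than the join topology, so that it is a $G^r$-CW complex and so that the compactness argument for weak contractibility applies. We also need that fixed points commute with joins and with colimits along closed inclusions. Both facts are standard for discrete groups acting cellularly, and we verify them at the level of finite joins before passing to the colimit.
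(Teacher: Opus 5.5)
Your approach is the standard one: verify the two criteria of Theorem~\ref{thm: luck} for the infinite join. The paper gives no proof of its own and simply cites Farber--Grant--Lupton--Oprea and Farber--Oprea, who argue this way. Your isotropy computation is correct: the stabilizer of $\sum t_ix_i$ is $\bigcap_{t_i\neq0}G^r_{x_i}$, a nonempty finite intersection of conjugates of the diagonal. Your fixed-point computation is also correct: $X^H=(S^H)^{\star\infty}$, and $S^H\neq\varnothing$ for every $H\in\D_r$.

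There is one step whose justification fails, and it needs repair. You write ``the action on each finite join is cellular, so $X$ is a $G^r$-complex,'' and later that this is ``standard for discrete groups acting cellularly.'' Cellularity alone does not give a $G$-complex. The paper's Proposition~\ref{prop: cw failure} shows this failing on a closely related example. The simplex $\Delta^{G^{r-1}}$, with the simplicial action induced by the same $G^r$-set, is not a $G^r$-complex whenever $G$ is permutable, because an element can stabilize an open simplex while nontrivially permuting its vertices. Theorem~\ref{thm: luck} applies only to $G$-complexes, so this point needs a real argument. It is not a matter of topology, which is where your last paragraph places the difficulty.

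The repair uses the observation you already make in the isotropy paragraph, that join coordinates are kept separate:
\begin{itemize}
\item An open simplex of $S^{\star n}$ is indexed by a nonempty $I\subseteq\{1,\ldots,n\}$ and points $(x_i)_{i\in I}$ of $S$.
\item Because the action preserves join indices, $g$ carries this simplex to the one indexed by $I$ and $(gx_i)_{i\in I}$.
\item So if $g$ stabilizes the open simplex setwise, then $gx_i=x_i$ for all $i\in I$, and $g$ fixes the simplex pointwise.
\item Proposition~\ref{prop: G-CW criterion} then shows that $X$ is a $G^r$-complex.
\end{itemize}
This index-preservation is exactly what separates the join from the simplex. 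It rules out the nontrivial vertex permutations that enlarge the family to $\P_r$ in Theorem~\ref{thm: classifying model}. With this repair, your argument is complete.
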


We write $\mathcal{FIN}=\mathcal{FIN}(G)$ for the collection of all finite subgroups of $G$, which is easily seen to be a family. In the following result, we consider the simplex $\Delta^G$ spanned by $G$ with the affine linear $G$-action induced by left multiplication.

\begin{proposition}\label{prop: simplex over G}
The barycentric subdivision of the simplex $\Delta^{G}$ is a model for $\mathcal{FIN}$.
\end{proposition}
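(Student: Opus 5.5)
We will verify the two criteria of Theorem~\ref{thm: luck} for $X=\operatorname{sd}\Delta^G$, the barycentric subdivision of the full simplex spanned by $G$. Here $\Delta^G$ means the infinite simplex: the simplicial complex whose simplices are the nonempty finite subsets of $G$, with the weak topology. Its barycentric subdivision is the geometric realization of the poset $\P_{\mathrm{fin}}(G)$ of nonempty finite subsets of $G$, ordered by inclusion. The $G$-action by left multiplication is simplicial on $\operatorname{sd}\Delta^G$. Moreover, an element fixing a simplex $S_0\subsetneq S_1\subsetneq\cdots\subsetneq S_k$ setwise must fix each $S_i$, since it preserves cardinality; hence it fixes the simplex pointwise. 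This is exactly why we pass to the subdivision: it makes $X$ a genuine $G$-CW complex, with cells $G\times_{G_\sigma}\Delta^k$.

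\emph{Isotropy.} A point $x$ in the interior of the simplex $S_0\subsetneq\cdots\subsetneq S_k$ has stabilizer
\[G_x=\bigcap_i\{g: gS_i=S_i\}.\]
Since $G$ acts freely on itself, the setwise stabilizer of a finite nonempty set $S$ acts freely on $S$, so it has order at most $|S|$. Thus every isotropy group is finite, and $X$ has isotropy in $\mathcal{FIN}$.

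\emph{Fixed points.} Let $H\le G$ be finite. The fixed set $X^H$ is the realization of the subposet of $H$-invariant nonempty finite subsets; this uses again that simplices are fixed pointwise once fixed setwise. Call this subposet $Q_H$. It is nonempty, since $H$ itself lies in it. It is also directed: if $S,T\in Q_H$ then $S\cup T\in Q_H$. The realization of a directed poset is contractible. Indeed, every finite subcomplex lies in a finite subposet, which has an upper bound $U$ in $Q_H$. That finite subposet together with $U$ is a cone on $U$, so the subcomplex contracts within $X^H$. Therefore all homotopy groups of $X^H$ vanish, by compactness of spheres in CW complexes. So $X^H$ is weakly contractible, and Theorem~\ref{thm: luck} gives the result.

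The main point requiring care is the identification of $X^H$ with the realization of the invariant subposet, together with the $G$-CW structure. Both rest on the observation that a simplicial action on a barycentric subdivision fixes a simplex pointwise whenever it fixes it setwise. Beyond that, the only detail is interpreting $\Delta^G$ with the weak (CW) topology when $G$ is infinite.
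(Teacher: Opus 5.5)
Your proof is correct, but it takes a different route from the paper's. The paper never works with the simplicial structure of the subdivision. Via Lemma~\ref{lem: barycentric}, it reduces to checking the two conditions of Theorem~\ref{thm: luck} on $\Delta^G$ itself, which has the same underlying space and action:
\begin{itemize}
\item \emph{Isotropy} is handled in barycentric coordinates: if $g$ fixes $\sum t_ig_i$, then $g$ permutes the $g_i$, so $g\in\{g_1g_i^{-1}\}$, a finite set.
\item \emph{Weak contractibility} of $(\Delta^G)^H$ comes from convexity of fixed sets of affine actions (Lemma~\ref{lem: convex}), together with the fact that $H$ fixes the barycenter of the face it spans.
\end{itemize}
You instead work combinatorially on the order complex of finite nonempty subsets. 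You identify stabilizers as intersections of setwise stabilizers, and fixed sets as realizations of the directed poset of $H$-invariant finite subsets. The two arguments match step for step. Your bound $|\mathrm{Stab}(S)|\le|S|$ is the same free-action observation as the paper's. Your use of $H$ as a point of $Q_H$ is the paper's barycenter of the face spanned by $H$. Closure of $Q_H$ under unions is the combinatorial shadow of convexity, since the union of two supports is the support of a convex combination. The paper's version is shorter, and it is the template reused for Theorem~\ref{thm: classifying model}, where the only real work is computing stabilizers and exhibiting one fixed point. Yours costs more bookkeeping but is more self-contained on the point-set side. It avoids relying on Lemma~\ref{lem: convex}, which is stated for convex subsets of $\mathbb{R}^n$, whereas $\Delta^G$ is infinite-dimensional with the weak topology. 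It also makes the $G$-CW structure and the subcomplex structure of $X^H$ explicit rather than delegating them to Lemma~\ref{lem: barycentric}.
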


For the proof, we require an observation concerning spaces satisfying the hypotheses of Theorem~\ref{thm: luck} save for the basic assumption of being a $G$-complex. 

\begin{lemma}\label{lem: barycentric}
Let $\mathcal{F}$ be a family of subgroups of $G$ and $X$ a simplicial complex with a simplicial $G$-action. If $X$ satisfies the two conditions of Theorem~\ref{thm: luck}, then the barycentric subdivision of $X$ is a model for $\mathcal{F}$.
\end{lemma}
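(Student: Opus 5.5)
The plan is to reduce the lemma to Theorem~\ref{thm: luck} by checking its two conditions for the barycentric subdivision $X'$ of $X$. The only thing that can fail for $X$ itself is that a simplicial action need not be a $G$-CW structure, since a group element may carry a simplex to itself while permuting its vertices nontrivially. Barycentric subdivision is exactly designed to cure this.

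First I show that $X'$ with the induced action is a $G$-complex. A simplex of $X'$ is a chain $\sigma_0<\sigma_1<\dots<\sigma_k$ of simplices of $X$. If $g$ maps such a chain to itself, then, because $g$ preserves dimension, it must fix each $\sigma_i$ setwise. Hence $g$ fixes every barycenter, and since the vertices of the chain are these barycenters, $g$ fixes the simplex of $X'$ pointwise. So the action on $X'$ is cellular, and any element stabilizing an open cell fixes that cell pointwise. This is the standard criterion for a simplicial $G$-complex to be a $G$-CW complex, with the $G$-cells being the orbits of simplices of $X'$. The topology is unchanged, because $X'$ and $X$ are homeomorphic through the identity map, compatibly with the $G$-action.

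Next I check the two conditions, which are statements about the underlying $G$-space and so transfer directly from $X$. The underlying $G$-space of $X'$ is $X$ itself, so for every point $x$ the stabilizer $G_x$ is the same, and it lies in $\F$ by hypothesis. Similarly, for each $H\in\F$ the fixed set $(X')^H$ equals $X^H$ as a space, so it is weakly contractible. Theorem~\ref{thm: luck} then says that $X'$ is a model for $\F$.

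The main obstacle is the first step: making sure $X'$ really is a $G$-CW complex in the sense of~\cite{tD}. This means verifying that the action permutes cells and that an element stabilizing a cell acts on it trivially. The chain argument above handles both, since dimension is preserved. One should also note that the topology on $X$ is the usual weak topology of a simplicial complex, so that the identification of $X'$ with $X$ is a homeomorphism.
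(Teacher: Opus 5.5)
Your proposal is correct and matches the paper's argument: the paper also shows the barycentric subdivision is a $G$-complex. It notes that an element stabilizing the simplex indexed by a chain $\sigma_0\subsetneq\cdots\subsetneq\sigma_n$ must fix each $\sigma_i$, hence the vertices and so the whole simplex, and then applies the criterion of Proposition~\ref{prop: G-CW criterion}; the two conditions of Theorem~\ref{thm: luck} carry over unchanged because the underlying $G$-space is the same.
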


We will also make use of the following easy fact about affine linear actions on convex spaces.

\begin{lemma}\label{lem: convex}
Let $X\subseteq\mathbb{R}^n$ be a convex subspace with an affine linear $G$-action. For any subgroup $H\leq G$, the fixed set $X^H$ is convex.
\end{lemma}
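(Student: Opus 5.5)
The statement to prove is Lemma~\ref{lem: convex}: for a convex subspace $X\subseteq\mathbb{R}^n$ with an affine linear $G$-action and any $H\leq G$, the fixed set $X^H$ is convex. The plan is to write $X^H$ as an intersection of fixed sets of single elements and to show that each of these is convex.

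First, note that
\[X^H=\bigcap_{h\in H}X^{h},\qquad X^h=\{x\in X: hx=x\}.\]
An arbitrary intersection of convex sets is convex; the empty intersection is $X$ itself, which is convex by hypothesis. So it suffices to treat the case where $H$ is generated by a single element $h$.

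Next, fix $h\in H$ and $x,y\in X^h$, and let $t\in[0,1]$. Since $X$ is convex, the point $tx+(1-t)y$ lies in $X$, so $h$ acts on it. The map $x\mapsto hx$ is affine linear, meaning it preserves affine combinations of points of $X$ (convex combinations in particular). Therefore
\[h\bigl(tx+(1-t)y\bigr)=t\,hx+(1-t)\,hy=tx+(1-t)y,\]
so $tx+(1-t)y\in X^h$. This shows $X^h$ is convex, and the lemma follows from the first step.

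There is no genuine obstacle. The only point needing care is the meaning of ``affine linear action'' on a subspace $X$ that need not be all of $\mathbb{R}^n$. Here it should mean that each group element acts on $X$ by a map preserving convex combinations. In the intended application $X=\Delta^G$, and the action permutes vertices and extends by taking barycentric coordinates to barycentric coordinates, so this holds. If $G$ is infinite, $\Delta^G$ lies in an infinite-dimensional space rather than $\mathbb{R}^n$, but the argument above never uses finite dimensionality: it uses only convexity and the preservation of convex combinations.
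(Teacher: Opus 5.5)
Your proof is correct and is the intended argument: the paper records this as an ``easy fact'' without proof, and writing $X^H=\bigcap_{h\in H}X^h$, with each $X^h$ convex because $h$ preserves convex combinations, is all that is needed. Your remark that finite dimensionality is never used is apt, since the paper applies the lemma to $\Delta^G$ and $\Delta^{G^{r-1}}$, which are infinite-dimensional when $G$ is infinite.
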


\begin{proof}[Proof of Proposition~\ref{prop: simplex over G}]
By Lemma~\ref{lem: barycentric}, it suffices to verify the criteria of Theorem~\ref{thm: luck} for $\Delta^G$. For the first criterion, supposing that $g$ fixes $\sum_{i=1}^n t_i g_i$ with the $g_i$ distinct, it follows that $\{g_i\}_{i=1}^n=\{gg_i\}_{i=1}^n$ so $g\in\{g_1g_i^{-1}\}_{i=1}^n$, a finite set. For the second criterion, since every fixed set is convex by Lemma~\ref{lem: convex}, it suffices to note that the fixed set of $H$ is non-empty for finite $H$, since $H$ fixes the barycenter of the face spanned by $H$. 
\end{proof}

For the proof of the lemma, we recall the following characterization of $G$-complexes~\cite[Rmk. 1.3]{Lu}.

\begin{proposition}\label{prop: G-CW criterion}
A CW complex $X$ with a cellular $G$-action is a $G$-complex if and only if, for every $g\in G$, any open cell $e$ fixed setwise by $g$ is fixed pointwise by $g$.
\end{proposition}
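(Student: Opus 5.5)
The plan is to unwind the definition of a $G$-complex: a $G$-space $X$ with a filtration $X_0\subseteq X_1\subseteq\cdots$, where $X_n$ is obtained from $X_{n-1}$ by a $G$-pushout along a disjoint union of attaching maps $G/H_i\times S^{n-1}\to X_{n-1}$, with $G/H_i\times D^n$ as the cells. We compare this with the underlying nonequivariant CW structure. Throughout, "cellular $G$-action" is read in the sense of \cite{Lu}: each $g\in G$ acts by a cellular homeomorphism that permutes the open cells. Since $g^{-1}$ is also cellular, each $g$ maps open $n$-cells bijectively onto open $n$-cells.

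\emph{Necessity.} Suppose $X$ is a $G$-complex. Each equivariant cell $G/H\times D^n$ contributes the open cells $\{xH\}\times \mathrm{int}(D^n)$, and $G$ acts on $G/H\times D^n$ only through the first factor. If $g$ maps the open cell indexed by $xH$ to itself, then $gxH=xH$. The map $g$ then sends each point $(xH,t)$ to $(gxH,t)=(xH,t)$, so it fixes the cell pointwise. This direction is formal.

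\emph{Sufficiency.} Suppose every setwise-fixed open cell is pointwise fixed. Fix $n$, let $G$ act on the set of open $n$-cells, and choose one representative $e_i$ in each orbit, with characteristic map $\varphi_i\colon D^n\to X_n$. Let $H_i$ be the setwise stabilizer of $e_i$. By hypothesis $H_i$ fixes $e_i$ pointwise, hence by continuity fixes $\varphi_i(D^n)=\overline{e_i}$ pointwise. This makes
\[\Phi_i\colon G/H_i\times D^n\to X_n,\qquad (gH_i,t)\mapsto g\varphi_i(t)\]
well defined and equivariant, and it restricts to an equivariant map $G/H_i\times S^{n-1}\to X_{n-1}$. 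The open cells $ge_i$ are pairwise distinct for distinct cosets $gH_i$, precisely because $H_i$ is the full stabilizer. Hence the family $\{g\varphi_i\}$, indexed by $i$ and $gH_i\in G/H_i$, runs through the characteristic maps of all $n$-cells exactly once. Consequently the square formed from $\coprod_i G/H_i\times S^{n-1}\to X_{n-1}$ and $\coprod_i G/H_i\times D^n\to X_n$ is, after forgetting the action, exactly the usual CW pushout for $X_n$. All maps in it are $G$-equivariant, so it is also a pushout of $G$-spaces. Finally, $X=\mathrm{colim}\,X_n$ carries the weak topology already, so the filtration exhibits $X$ as a $G$-complex.

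The main obstacle is the well-definedness of $\Phi_i$. It needs $H_i$ to act trivially on the whole closed cell $\overline{e_i}$, not merely on $e_i$, and this is where the hypothesis and the continuity argument are used. A secondary point is the matching of cosets with cells, which needs $H_i$ to be exactly the setwise stabilizer so that no cell is counted twice. Once these two points are settled, the pushout and topology assertions transfer verbatim from the nonequivariant CW structure.
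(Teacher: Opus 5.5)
Your argument is correct. The paper itself gives no proof here and simply cites \cite[Rmk.~1.3]{Lu}, whereas you verify the equivalence directly from the pushout definition of a $G$-complex. The citation buys brevity. Your version shows exactly where the hypothesis enters: it makes $\Phi_i$ well defined, because $h\varphi_i$ and $\varphi_i$ agree on the dense subset $\mathrm{int}(D^n)\subseteq D^n$ and $X$ is Hausdorff. It also shows why $H_i$ must be the full setwise stabilizer, so that cosets index cells bijectively.

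Two standard CW facts are used silently and deserve a sentence each in a write-up.
\begin{enumerate}
\item A homeomorphism $g$ with $g$ and $g^{-1}$ both cellular preserves $X_n\setminus X_{n-1}$. It therefore permutes the connected components of that set, which are exactly the open $n$-cells. This is what your phrase ``since $g^{-1}$ is also cellular'' is standing in for.
\item The maps $g\varphi_i$ are in general not the characteristic maps that came with the given CW structure. You therefore need that $X_n$ is the pushout of $X_{n-1}\leftarrow\coprod S^{n-1}\to\coprod D^n$ for \emph{any} choice of characteristic maps. This holds because a subset of $X_n$ with closed preimage meets each closed cell $\overline{e}=\varphi_e(D^n)$ in a compact, hence closed, set, and that criterion does not depend on the choice of $\varphi_e$.
\end{enumerate}

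Finally, the necessity direction presupposes, as the statement intends, that the given CW structure is the one underlying the $G$-CW structure, rather than some other CW structure on the same $G$-space.
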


\begin{corollary}\label{cor: barycentric}
Let $X$ be a simplicial complex with a simplicial $G$-action. The barycentric subdivision of $X$ with its canonical $G$-action is a $G$-complex. 
\end{corollary}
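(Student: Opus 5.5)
The plan is to apply Proposition~\ref{prop: G-CW criterion} to the barycentric subdivision $X'$ of $X$. The simplices of $X'$ are chains $\sigma_0<\sigma_1<\cdots<\sigma_k$ of simplices of $X$ under the face relation. The $G$-action on $X'$ is the one induced on such chains, $g\cdot(\sigma_0<\cdots<\sigma_k)=(g\sigma_0<\cdots<g\sigma_k)$, and it is cellular because $g$ acts simplicially and preserves the face relation. It is therefore enough to check the condition of Proposition~\ref{prop: G-CW criterion}: if $g\in G$ fixes an open cell of $X'$ setwise, then $g$ fixes it pointwise.

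First, let $e$ be the open simplex of $X'$ corresponding to a chain $\sigma_0<\cdots<\sigma_k$, and suppose $g e=e$. The action is simplicial, so $g$ maps the closed simplex onto itself and permutes its vertices. Those vertices are the barycenters $\hat\sigma_0,\dots,\hat\sigma_k$.

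The key observation is that $g\hat\sigma_i=\widehat{g\sigma_i}$, since the action is affine on each simplex of $X$. It follows that $g$ permutes the set $\{\sigma_0,\dots,\sigma_k\}$. Moreover $g$ preserves dimension, because it maps simplices to simplices bijectively. The $\sigma_i$ have strictly increasing dimensions, so the permutation must be the identity, and $g\sigma_i=\sigma_i$ for every $i$. Hence $g$ fixes each vertex $\hat\sigma_i$ of $e$.

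Finally, $g$ acts affinely on the closed simplex of $X'$ spanned by these vertices, since this simplex lies inside $\sigma_k$ and the action is linear there. An affine map fixing every vertex of a simplex fixes the whole simplex, so $g$ fixes $e$ pointwise. Proposition~\ref{prop: G-CW criterion} then shows that $X'$ is a $G$-complex.

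The main point needing care is the claim that $g$ acts affinely on the simplices of $X'$, that is, that the subdivided action is exactly the one induced by the original simplicial action. This is how the canonical action is defined, so I expect no real obstacle. The essential idea is the dimension argument that excludes nontrivial permutations of the vertices.
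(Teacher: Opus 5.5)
Your proposal is correct and follows essentially the same route as the paper. You apply Proposition~\ref{prop: G-CW criterion}, show that a $g$ stabilizing the open simplex indexed by $\sigma_0\subsetneq\cdots\subsetneq\sigma_k$ must fix each $\sigma_i$, hence fixes the vertices, and therefore fixes the simplex pointwise. The only difference is that you spell out why each $\sigma_i$ is fixed using a dimension count, where the paper simply reads it off from the equality of chains.
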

\begin{proof}
An open $n$-simplex $\sigma$ of the barycentric subdivision is indexed by a chain $\sigma_0\subsetneq \cdots\subsetneq \sigma_n$ of simplices of $X$, and $g\sigma$ is the simplex indexed by $g\sigma_0\subsetneq\cdots\subsetneq g\sigma_n$. Assuming that $g\sigma=\sigma$, it follows that $g\sigma_i=\sigma_i$ for each $0\leq i\leq n$, so $g$ fixes the vertices of $\sigma$. Since the action is simplicial, it follows that $g$ fixes $\sigma$ pointwise, and Proposition~\ref{prop: G-CW criterion} applies.
\end{proof}

Lemma~\ref{lem: barycentric} now follows immediately from Theorem~\ref{thm: luck} and Corollary~\ref{cor: barycentric}.

\subsection{Topological complexity with respect to a family}\label{section: invariant} We come now to the main definition. Recall that the space of homotopy fixed points of $G$ acting
on $X$ is the space of equivariant maps $X^{hG}=\mathrm{Map}^G(EG, X)$. 

\begin{definition}\label{def: main definition}
Let $G$ be a group and $\F$ a family of subgroups of $G$. The \emph{$\F$-topological complexity of $G$} is the quantity
\[\TC_\F(G)=\min\left\{n\ge 0\ \middle| \ E_{\F}(G)_{n}^{hG} \ne \varnothing \right\}.\]
\end{definition}

Note that $\TC_\F(G)$ is well-defined by Lemma~\ref{lem: skeleta of models}. The following alternate characterization is sometimes useful.

\begin{proposition}\label{prop: atc char-2}
Let $\F$ be a family of subgroups containing the trivial subgroup. The quantity $\TC_\F(G)$ is the least extended natural number $n$ for which the canonical map $EG=E_{\{1\}}(G)\to E_\F(G)$ factors through $E_\F(G)_n$ up to equivariant homotopy.
\end{proposition}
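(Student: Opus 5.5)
The plan is to identify $\mathrm{Map}^G(EG,Y)$ with the set of homotopy classes of equivariant maps, and then use the cellular approximation theorem for $G$-complexes. Write $X=E_\F(G)$ and let $c\colon EG\to X$ denote the canonical map. Since $\{1\}\in\F$, the complex $EG=E_{\{1\}}(G)$ has isotropy in $\F$. Hence $c$ exists, is unique up to equivariant homotopy, and may be taken cellular.

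\emph{Step 1: factorization implies a homotopy fixed point.} Suppose that $c$ is equivariantly homotopic to a composite $EG\to X_n\hookrightarrow X$. The first map is then an equivariant map $EG\to X_n$, that is, a point of $X_n^{hG}$. So $X_n^{hG}\neq\varnothing$, which gives $\TC_\F(G)\leq n$.

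\emph{Step 2: a homotopy fixed point yields a factorization.} Suppose that $f\colon EG\to X_n$ is an equivariant map. Compose it with the inclusion to get an equivariant map $EG\to X$. By the universal property of Definition~\ref{def: model}, applied to the $G$-complex $EG$ with isotropy in $\F$, equivariant maps $EG\to X$ are unique up to equivariant homotopy. Therefore $\iota\circ f\simeq c$ equivariantly, and $c$ factors through $X_n$ up to homotopy. This shows that the two conditions coincide for each finite $n$, so the two minima agree.

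\emph{Step 3: the case $n=\infty$.} Here $X_\infty=X$, and the condition on the factorization side is vacuous because $c$ itself is a factorization. On the fixed-point side, $X^{hG}\neq\varnothing$ because $c$ is a point of it. Both minima are therefore at most $\infty$, and they are equal by Steps 1 and 2. One should also check that the minimum over $n$ does not depend on the chosen model. For $\TC_\F(G)$ this is Lemma~\ref{lem: skeleta of models}. The same lemma, combined with uniqueness of maps into models, shows that the factorization condition is also independent of the model.

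There is no real obstacle; the argument is a short unwinding of the definitions. The only point needing care is Step 2, where uniqueness is applied to maps into the whole classifying space $X$ and not into the skeleton $X_n$. No cellularity of $f$ is needed, because equivariant homotopy is all that is required.
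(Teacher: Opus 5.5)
Your proposal is correct and follows essentially the same route as the paper: $\TC_\F(G)\leq n$ holds exactly when there is an equivariant map $EG\to E_\F(G)_n$, and the uniqueness clause of Definition~\ref{def: model} makes its composite with the inclusion equivariantly homotopic to the canonical map (your Step 2), while the converse (your Step 1) is immediate. The cellular approximation announced in your opening sentence is never actually used, as you yourself observe, so it can be dropped.
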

\begin{proof}
By definition, $\TC_\F(G)\leq n$ if and only if there is an equivariant map $EG\to E_\F(G)_n$. The result of composing this map with the inclusion into $E_\F(G)$ is homotopic to the canonical map by Definition~\ref{def: model}.
\end{proof}

In some cases, the invariant $\TC_\F(G)$ recovers a well-known quantity. 

\begin{ex}
When $\F$ is the family of all subgroups, we have $\TC_\F(G)=0$ by Example~\ref{ex: singleton}.
\end{ex}

\begin{ex}\label{example: trivial family cd}
In view of Example~\ref{ex: trivial}, we have $\TC_{\{1\}}(G)=\cd(G)$, the cohomological dimension of $G$. Indeed, by Example~\ref{ex: trivial} and Proposition~\ref{prop: atc char-2}, the left-hand quantity is the least $n$ for which the $n$-skeleton of $EG$ is a homotopy retract. That this latter number is the cohomological dimension is a well-known fact, which may be proven by the same argument as~\cite[Thm. 7.4]{KW1}, for example.
\end{ex}

An advantage of the generality of Definition~\ref{def: main definition} is that it gives rise to a number of flexible bounds. First, we have bounds arising from functoriality in the variables $\F$ and $G$.

\begin{proposition}\label{prop: functorial bounds}
Let $\mathcal{F}$ be a family of subgroups of $G$.
\begin{enumerate}
\item For any subfamily $\mathcal{F}'\subseteq \mathcal{F}$, we have $\TC_\F(G)\leq \TC_{\F'}(G)$.
\item For any homomorphism $\varphi: K\to G$, we have $\TC_{\varphi^*\mathcal{F}}(K)\leq \TC_\mathcal{F}(G)$.
\end{enumerate}
\end{proposition}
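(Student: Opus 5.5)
For both parts I would work directly from Definition~\ref{def: main definition}. The statement $\TC_\F(G)\le n$ means there is a $G$-equivariant map $EG\to E_\F(G)_n$. In each case I would take such a map for the larger invariant and compose it with an equivariant cellular map into the appropriate skeleton. This produces a point of the homotopy fixed point space witnessing the smaller invariant. If the right-hand side is $\infty$ there is nothing to prove, so we may assume it is some finite $n$.

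For (1), fix models $E_{\F'}(G)$ and $E_\F(G)$. Since $\F'\subseteq\F$, the model $E_{\F'}(G)$ is a $G$-complex with isotropy in $\F$. By Definition~\ref{def: model} there is an equivariant map $E_{\F'}(G)\to E_\F(G)$, and by equivariant cellular approximation we may take it to be cellular, as remarked after Definition~\ref{def: model}. A cellular map carries $n$-skeleta into $n$-skeleta, so it restricts to an equivariant map $E_{\F'}(G)_n\to E_\F(G)_n$. Composing this with an equivariant map $EG\to E_{\F'}(G)_n$, where $n=\TC_{\F'}(G)$, gives an equivariant map $EG\to E_\F(G)_n$. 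Hence $\TC_\F(G)\le n$.

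For (2), I would use Proposition~\ref{prop: restriction family}: the restriction $\varphi^*E_\F(G)$ is a model for $\varphi^*\F$. Because $K$ acts through $\varphi$ and the underlying CW structure is unchanged, its $n$-skeleton is $\varphi^*(E_\F(G)_n)$. By Lemma~\ref{lem: skeleta of models}, the invariant can be computed using any model, so it is enough to produce a $K$-map $EK\to\varphi^*(E_\F(G)_n)$. Start with a $G$-map $f\colon EG\to E_\F(G)_n$, where $n=\TC_\F(G)$. Restriction along $\varphi$ turns $f$ into a $K$-map $\varphi^*EG\to\varphi^*(E_\F(G)_n)$. It therefore remains to find a $K$-map $EK\to\varphi^*EG$. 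The space $\varphi^*EG$ is a $K$-complex, with isotropy $\varphi^{-1}(1)=\ker\varphi$, and $EK$ is free. Every $K$-complex admits a $K$-map from $EK$: this holds for the singleton of Example~\ref{ex: singleton}, and in general one can take a projection $EK\times Y\to Y$ together with a section, or argue cell by cell. The cleanest route is to realize $EK$ as $EK\times\varphi^*EG$, which is free and contractible and hence a model for $\{1\}$ by Theorem~\ref{thm: luck}, and then project to the second factor. Composing the two $K$-maps finishes the argument.

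The main thing to check is the bookkeeping about skeleta and models. In (2) the identification of the $n$-skeleton of the restriction with the restriction of the $n$-skeleton, together with the model-independence from Lemma~\ref{lem: skeleta of models}, is what makes the comparison legitimate. In (1) the key point is cellularity, which is what allows the map of classifying spaces to restrict to skeleta.
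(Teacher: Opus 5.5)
Your proof is correct and follows the paper's argument. In (1) you compose with the restriction to $n$-skeleta of the cellular comparison map $E_{\F'}(G)\to E_\F(G)$. In (2) you restrict a $G$-map $EG\to E_\F(G)_n$ along $\varphi$ and precompose with a $K$-map $EK\to\varphi^*EG$. The paper takes that last map to be the natural $\varphi$-equivariant map $EK\to EG$. You instead realize $EK$ as $EK\times\varphi^*EG$ and project, which works equally well. Your remark that the $n$-skeleton of $\varphi^*E_\F(G)$ is $\varphi^*(E_\F(G)_n)$ makes explicit a point the paper leaves implicit.

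You should delete one sentence, however: ``Every $K$-complex admits a $K$-map from $EK$'' is false. Take a proper subgroup $H<K$. The $0$-dimensional $K$-complex $K/H$ receives no $K$-map from the connected space $EK$, since such a map would be constant at a $K$-fixed point, and $K/H$ has none. Indeed, if the claim held, every $E_\F(G)_0$ would have a homotopy fixed point, and $\TC_\F$ would vanish identically, contradicting $\TC_{\{1\}}(G)=\cd(G)$.

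The accompanying justifications also fail for general $Y$. The ``projection plus section'' idea needs $EK\times Y$ to be a model for $\{1\}$, which holds only when $Y$ is contractible. The ``cell by cell'' extension needs $Y$ to be weakly contractible. Your final route is valid precisely because $\varphi^*EG$ is nonequivariantly contractible, so $EK\times\varphi^*EG$ is free and contractible. Equivalently, a free $K$-complex maps equivariantly to any $K$-space whose underlying space is weakly contractible.
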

\begin{proof}
For the first claim, setting $n=\TC_{\mathcal{F}'}(G)$, the definition grants the existence of an equivariant map $EG\to E_{\mathcal{F}'}(G)_n$. Composing this map with the restriction to $n$-skeleta of the essentially unique map $E_{\mathcal{F}'}(G)\to E_{\mathcal{F}}(G)$, we conclude that $\TC_\F(G)\leq n$, as desired. For the second claim, setting $n=\TC_\F(G)$, the definition grants the existence of a $G$-equivariant map $EG\to E_\F(G)_n$. Composing with the natural map from $EK$,
we obtain the $K$-equivariant map
$EK\to \varphi^*EG\to \varphi^*E_\F(G)_n,$ so $\TC_{\varphi^*\F}(K)\leq n$ by Proposition~\ref{prop: restriction family}, as desired.
\end{proof}

\begin{corollary}\label{cor: general subgroup lower bound}
Let $\F$ be a family of subgroups of $G$ and $K\leq G$ a subgroup. If $K\cap H=\{1\}$ for every $H\in \F$, then
\[\TC_\F(G)\geq \cd(K).\]
\end{corollary}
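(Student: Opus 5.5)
Apply Proposition~\ref{prop: functorial bounds}(2) to the inclusion $\iota: K\to G$, which gives $\TC_{\iota^*\F}(K)\leq \TC_\F(G)$. It then suffices to identify the pulled-back family $\iota^*\F=\{K\cap H : H\in\F\}$ and to compute $\TC$ of $K$ with respect to it.

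By hypothesis $K\cap H=\{1\}$ for every $H\in\F$, so every member of $\iota^*\F$ is trivial. The family $\F$ is closed under conjugation, so the hypothesis covers all conjugates automatically. If $\F$ is nonempty, then $\iota^*\F=\{1\}$. If $\F$ is empty, then $E_\F(G)$ is empty, so $\TC_\F(G)=\infty$ and the inequality holds trivially.

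In the nonempty case, Example~\ref{example: trivial family cd} gives $\TC_{\{1\}}(K)=\cd(K)$. Combining,
\[\cd(K)=\TC_{\iota^*\F}(K)\leq \TC_\F(G).\]

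Everything here is routine given the earlier results. The only points needing care are the degenerate empty-family case and checking that $\iota^*\F$ is exactly $\{1\}$, so that Example~\ref{example: trivial family cd} applies. That example rests on the identification of $\TC_{\{1\}}$ with cohomological dimension, which the paper already takes as known.
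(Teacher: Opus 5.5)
Your proposal is correct and is the same argument as the paper: apply Proposition~\ref{prop: functorial bounds}(2) to the inclusion, observe that $\iota^*\F=\{1\}$, and invoke Example~\ref{example: trivial family cd}. Your separate handling of the degenerate case $\F=\varnothing$, where $\TC_\F(G)=\infty$, is a harmless bit of extra care that the paper leaves implicit.
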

\begin{proof}
Writing $i:K\to G$ for the inclusion, the assumption grants that $i^*\F=\{1\}$, so the conclusion follows from Example~\ref{example: trivial family cd} and Proposition~\ref{prop: functorial bounds}.
\end{proof}

The second type of bound relies on the theory of Bredon cohomology (see~\cite{Bre,Mis}, for example). As we need very little of this theory, we will be very brief.

Fixing a group $G$ and a family $\F$ of subgroups thereof, we recall that a coefficient system is a presheaf of Abelian groups on the orbit category $\mathcal{O}_\F$ of $\F$, which has objects transitive $G$-sets with isotropy in $\F$ and morphisms equivariant functions. Given a $G$-complex $X$ with isotropy in $\F$ and a coefficient system $M$, the Bredon cohomology $H^*_\F(X;M)$ is defined. In the case $\F=\{1\}$, we recover ordinary equivariant cohomology $H^*_G$, i.e., the cohomology of the Borel construction. In addition to functoriality in the space variable, Bredon cohomology enjoys contravariant functoriality for inclusions of families. Thus, if $1\in\F$, there arises a restriction homomorphism $H^*_\F(X;M)\to H^*_G(X;M(G))$. In the case $X=EG$, the target of this homomorphism is simply group cohomology.

\begin{definition}
Let $\F$ be a family of subgroups of $G$.
\begin{enumerate}
\item The \emph{geometric dimension} of $G$ with respect to $\F$ is the least extended natural number $\mathsf{gd}_\F(G)$ such that $\F$ admits a model of dimension $\mathsf{gd}_\F(G)$.
\item The \emph{cohomological dimension} of $G$ with respect to $\F$ is the least extended natural number $\cd_\F(G)$ such that $H^n_\F(G;M)=0$ for every coefficient system $M$ and $n>\cd_\F(G)$.
\item The \emph{principal component dimension} of $G$ with respect to $\F$ is the least extended natural number $\mathsf{pcd}_\F(G)$ such that the restriction homomorphism $H^n_\F(EG;M)\to H^n(G;M(G))$ induced by the inclusion of the trivial family vanishes for every coefficient system $M$ and $n>\mathsf{pcd}_\F(M)$. If $1\notin\F$, then $\mathsf{pcd}_\F(G)$ is undefined.
\end{enumerate}
\end{definition}

\begin{proposition}\label{prop: cohomological bounds}
For any group $G$ and family of subgroups $\F$ containing the trivial subgroup, we have the inequalities
\[\mathsf{pcd}_\F(G)\leq  \TC_\F(G)\leq \mathsf{gd}_\F(G)\leq \max \{3, \cd_\F(G)\}.\]
\end{proposition}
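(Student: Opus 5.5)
We prove the three inequalities separately, working from right to left.

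\textbf{Rightmost inequality.} The bound $\mathsf{gd}_\F(G)\leq \max\{3,\cd_\F(G)\}$ is the Bredon analogue of the Eilenberg--Ganea theorem, due to L\"uck and Meintrup (see~\cite{Lu}). We will simply cite it.

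\textbf{Middle inequality.} Let $X$ be a model for $\F$ of dimension $d=\mathsf{gd}_\F(G)$. Then $X=X_d$. The definition of a model (Definition~\ref{def: model}) supplies an equivariant map $EG\to X$, since $EG$ is free and $1\in\F$. So $X_d^{hG}\neq\varnothing$, and Lemma~\ref{lem: skeleta of models} ensures that the answer does not depend on the choice of model. Hence $\TC_\F(G)\leq d$.

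\textbf{Leftmost inequality.} Set $n=\TC_\F(G)$ and fix a model $X$. By Proposition~\ref{prop: atc char-2}, the canonical map $c\colon EG\to X$ is equivariantly homotopic to a composite $EG\to X_n\hookrightarrow X$. Both Bredon cohomology and its restriction to the trivial family are natural in the space variable, and $H^*_\F(EG;M)$ is $H^*_\F(E_{\{1\}}(G);M)$. We therefore get a commutative square relating:
\begin{itemize}
\item the restriction $H^m_\F(X;M)\to H^m_G(X;M(G))$;
\item the restriction $H^m_\F(EG;M)\to H^m(G;M(G))$;
\item the pullbacks along $c$.
\end{itemize}
Since $X$ is terminal among $G$-complexes with isotropy in $\F$, the map $c^*\colon H^*_\F(X;M)\to H^*_\F(EG;M)$ is an isomorphism. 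Indeed, $EG\times X\to X$ is an equivariant homotopy equivalence on $\F$-fixed points. A cleaner route is to identify $H^*_\F(EG;M)$ with $H^*_\F(X;M)$ via the projection from $EG\times X$, or alternatively to note that the restriction homomorphism in the definition of $\mathsf{pcd}$ is meant to be the one from $H^*_\F(E_\F(G);M)$. I expect this identification to be the main subtle point.

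Granting it, any class in degree $m>n$ can be written as $c^*\alpha$ with $\alpha\in H^m_\F(X;M)$. Its image in $H^m(G;M(G))$ equals $c^*(\mathrm{res}\,\alpha)$, and this pulls back through $H^m_G(X_n;M(G))$. That group vanishes because $X_n$ is an $n$-dimensional $G$-complex, so its equivariant cellular cochains vanish above degree $n$. Hence the restriction map vanishes for all $m>n$, which gives $\mathsf{pcd}_\F(G)\leq n$.
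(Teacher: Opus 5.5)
\textbf{The leftmost inequality has a genuine error; the right and middle inequalities are argued exactly as in the paper.} The error is at the point you flagged. The claim that $c^*\colon H^*_\F(E_\F(G);M)\to H^*_\F(EG;M)$ is an isomorphism is false, and so is its justification. Since $EG\times X$ is free, its $H$-fixed points are empty for every $1\neq H\in\F$, so $EG\times X\to X$ is not an equivalence on fixed points.

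Concretely, because $EG$ is free, a Yoneda argument on Bredon cochains gives $H^*_\F(EG;M)\cong H^*(G;M(G))$. Take $\F$ to be all subgroups, $X=*$, $G=\mathbb{Z}$ and $M$ the constant system $\mathbb{Z}$. Then $H^1_\F(*;M)=0$, while $H^1_\F(EG;M)\cong H^1(\mathbb{Z};\mathbb{Z})\neq 0$.

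The same computation shows that the restriction map in the definition of $\mathsf{pcd}$, read literally with source $H^n_\F(EG;M)$, is an isomorphism. In this example the literal $\mathsf{pcd}$ would be at least $1>0=\TC_\F(\mathbb{Z})$, so the inequality would fail. Your first option, ``granting it, any class can be written as $c^*\alpha$,'' therefore cannot be rescued and should be dropped. Your second option is the right one, and it is how the paper's proof actually uses the definition. There the source is $H^n_\F(G;M)=H^n_\F(E_\F(G);M)$, and the map is $c^*$ followed by $H^n_\F(EG;M)\cong H^n(G;M(G))$. With this reading no identification is needed at all.

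With that reading, your argument becomes the paper's, which phrases it by contradiction with $n=\mathsf{pcd}_\F(G)$ and factors through $H^n_\F(E_\F(G)_{n-1};M)=0$. You should, however, do the factorization in Bredon cohomology. Writing $c\simeq i\circ f$ with $f\colon EG\to X_n$, the map $c^*=f^*\circ i^*$ factors through $H^m_\F(X_n;M)$. This group is zero for $m>n$ because the Bredon cellular cochains of an $n$-dimensional $G$-complex vanish above degree $n$.

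Routing instead through ``$H^m_G(X_n;M(G))$'' is a problem if $H^*_G$ means Borel cohomology, as in the paper. That group need not vanish above degree $n$ when $X_n$ is not free; for $X_n=*$ it is $H^*(G;M(G))$. It does vanish if you mean the cohomology of $\mathrm{Hom}_{\mathbb{Z}G}(C_*(X_n),M(G))$. In that case you must also supply the natural map into it from $H^*_\F(X;M)$, for example via the map from $M$ to the fixed-point coefficient system of $M(G)$. Staying in Bredon cohomology avoids this detour entirely.
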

\begin{proof}
The rightmost inequality is~\cite[Theorem~5.2]{Lu}. For the middle inequality, choosing a model for $\F$ of dimension $n=\mathsf{gd}_\F(G)$, the assumption that $1\in\F$ grants the existence of an essentially unique equivariant cellular map $EG=E_{\{1\}}(G)\to E_\F(G)=E_\F(G)_n$, as desired. For the leftmost inequality, set $n=\mathsf{pcd}_\F(G)$, and suppose that $\TC_\F(G)<n$. From the definitions and Proposition~\ref{prop: atc char-2}, there is a coefficient system $M$ for which the restriction homomorphism $H^n_\F(G;M)\to H^n_\F(EG;M)$ is nontrivial and a commutative diagram of the form
\[
\xymatrix{
H^n_\F(EG;M)   & & 
H^n_{\F}(E_{\F}(G); M) \ar[ll] \ar[dl]
\\
& H^n_{\F}(E_{\F}(G)_{n-1}; M).\ar[ul] 
}
\]
Since $E_{\F}(G)_{n-1}$ is $(n-1)$-dimensional, the group at the bottom of this diagram vanishes, so the top homomorphism is trivial by commutativity, a contradiction.
\end{proof}

\section{The permutational families}

This section introduces the key player in our study, namely family of subgroups $\P_r$ of $G^r$. The main results are Proposition~\ref{prop: family}, verifying that $\P_r$ is in fact a family, and Theorem~\ref{thm: classifying model}, which identifies the model for $\P_r$ subsequently used in the proof of Theorem~\ref{thm: homotopy fixed points}.

\subsection{Defining the families}

We fix a natural number $r>1$, which we suppress whenever possible. As a matter of notation, given $g\in G$, we write $\wh g=(g,\ldots,g)\in G^{r-1}$. Throughout, we regard $G^{r-1}$ as a $G^r$-set by writing $G^r=G\times G^{r-1}$ and defining 
\[(g,h)k=\wh gkh^{-1}.\]

\begin{lemma}\label{lem: cosets}
There is a canonical isomorphism of $G^r$-sets of the form
\[G^r/G\cong G^{r-1}.\]
\end{lemma}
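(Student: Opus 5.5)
Here $G$ in $G^r/G$ denotes the image of the diagonal homomorphism $G\to G^r$. The plan is to use the orbit--stabilizer theorem for the action $(g,h)k=\wh g k h^{-1}$ on $G^{r-1}$. Concretely, I would show three things in order: this formula is a left action; it is transitive; and the stabilizer of the identity element $\wh 1=(1,\ldots,1)$ is exactly the diagonal subgroup. The canonical isomorphism is then the orbit map
\[G^r/G\to G^{r-1},\qquad (g,h)G\mapsto (g,h)\wh 1=\wh g h^{-1},\]
which is automatically a well-defined $G^r$-equivariant bijection.

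For the action axioms, the identity $(1,\wh 1)$ acts trivially. Compatibility with products is the computation
\[(g,h)\bigl((g',h')k\bigr)=\wh g\,\wh{g'}\,k\,h'^{-1}h^{-1}=\wh{gg'}\,k\,(hh')^{-1},\]
which uses only that $g\mapsto \wh g$ is a homomorphism $G\to G^{r-1}$.

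Transitivity is immediate: any $k\in G^{r-1}$ equals $(1,k^{-1})\wh 1$.

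For the stabilizer, suppose $\wh g h^{-1}=\wh 1$. Then $h=\wh g$, so $(g,h)=(g,g,\ldots,g)$ lies in the image of the diagonal homomorphism. Conversely, every diagonal element fixes $\wh 1$.

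There is no real obstacle here. The only points needing care are the coordinate conventions: first, that the splitting $G^r=G\times G^{r-1}$ puts the diagonal $(g,\ldots,g)$ in the form $(g,\wh g)$; second, that the inverse on $h$ is what makes the right-multiplication part a left action. With those fixed, orbit--stabilizer gives the claimed canonical isomorphism.
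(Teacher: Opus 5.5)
Your proposal is correct and matches the paper's proof: orbit--stabilizer for the action $(g,h)k=\wh g k h^{-1}$, with transitivity from $(1,k^{-1})\wh 1=k$ and the stabilizer of $\wh 1$ identified as the diagonal subgroup. Your explicit check of the action axioms is a harmless addition that the paper leaves implicit.
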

\begin{proof}
By orbit-stabilizer, it suffices to show that the action of $G^r$ on $G^{r-1}$ is transitive, and that the stabilizer of $\widehat 1\in G^{r-1}$ is the diagonal subgroup. For transitivity, we have $(1, g_1^{-1},\ldots, g_{r-1}^{-1})\widehat 1=(g_1,\ldots, g_{r-1}),$ while requiring that $(g, h_1,\ldots, h_{r-1})$ stabilize $\widehat 1$ is equivalent to requiring that $gh_i^{-1}=1$ for each $i$.
\end{proof}

Our construction of the permutational families will rely on the following auxiliary concept.

\begin{definition}\label{def: type sigma}
    Fix subset $T\subseteq G^{r-1}$ and a permutation $\sigma\in \Sigma_{T}$. We say that an element $g\in G$ is of \emph{type $\sigma$} if
    \[
    \sigma(x)\sigma(y)^{-1}\wh g=\wh gxy^{-1}
    \]
    for all $x,y\in T$.
\end{definition}

We write $Z(T,\sigma)\subseteq G$ for the set of all elements of $G$ of type $\sigma$. Given $g\in Z(T,\sigma)$, it is immediate from the definition that $\sigma(x)^{-1}\widehat gx$ is independent of $x\in T$, and we write $g_\sigma$ for this common value. We set 
\[
\Delta(T,\sigma):=\left\{ \left(g,g_\sigma\right) \mid  g\in Z(T,\sigma) \right\}\subseteq G^{r}.
\]Given a subgroup $H\leq \Sigma_T$, we write $\Delta(T,H)=\bigcup_{\sigma\in H}\Delta(T,\sigma)$. Regarding this set, we have the following observation.

\begin{proposition}\label{prop: subgroup}
    For any subset $T\subseteq G^{r-1}$ and subgroup $H\leq \Sigma_{T}$, the subset $\Delta(\T,\H)\subseteq G^{r}$ is a subgroup.
\end{proposition}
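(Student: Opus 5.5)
The plan is to reinterpret $\Delta(T,\sigma)$ in terms of the $G^r$-action on $G^{r-1}$ introduced before Lemma~\ref{lem: cosets}, namely $(g,h)k=\wh g k h^{-1}$. Once that is done, $\Delta(T,H)$ becomes the preimage of $H$ under a group homomorphism, and so it is a subgroup. Concretely, I will show that for $T\neq\varnothing$,
\[
\Delta(T,\sigma)=\{\gamma\in G^r \mid \gamma x=\sigma(x)\text{ for all }x\in T\}.
\]

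For the inclusion $\subseteq$, take $g\in Z(T,\sigma)$ and $x\in T$. By definition $g_\sigma=\sigma(x)^{-1}\wh g x$, so $g_\sigma^{-1}=x^{-1}\wh g^{-1}\sigma(x)$. Hence
\[
(g,g_\sigma)x=\wh g\,x\,x^{-1}\wh g^{-1}\sigma(x)=\sigma(x).
\]
For the inclusion $\supseteq$, suppose $(g,h)$ satisfies $\wh g x h^{-1}=\sigma(x)$ for all $x\in T$. Then $h=\sigma(x)^{-1}\wh g x$ for every $x\in T$. Equating these expressions for $x,y\in T$ gives $\sigma(x)\sigma(y)^{-1}\wh g=\wh g xy^{-1}$. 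So $g$ is of type $\sigma$, and $h=g_\sigma$.

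Next, let $S=\{\gamma\in G^r \mid \gamma T=T\}$ be the setwise stabilizer of $T$; it is a subgroup of $G^r$. Restricting the action to $T$ gives a map $\rho\colon S\to\Sigma_T$. This is a homomorphism because the action is a left action: $(\gamma\gamma')x=\gamma(\gamma' x)$, so $\rho(\gamma\gamma')=\rho(\gamma)\circ\rho(\gamma')$, which is composition in $\Sigma_T$.

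The only point needing care is that an element with $\gamma x=\sigma(x)$ on $T$ a priori only satisfies $\gamma T\subseteq T$. Since $\sigma$ is surjective onto $T$, in fact $\gamma T=\sigma(T)=T$, so $\gamma\in S$. By the characterization above, $\Delta(T,\sigma)=\rho^{-1}(\sigma)$. Therefore $\Delta(T,H)=\rho^{-1}(H)$, which is a subgroup as the preimage of a subgroup under a homomorphism.

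The case $T=\varnothing$ must be handled separately. There $\Sigma_T$ is trivial and every $g\in G$ is vacuously of type $\mathrm{id}$, but $g_\sigma$ is not determined by the definition. I expect this to be the one real obstacle, and it is a matter of convention rather than mathematics. I would either exclude $T=\varnothing$ or adopt the reading consistent with the stabilizer description, under which $\Delta(\varnothing,H)=G^r$, trivially a subgroup.
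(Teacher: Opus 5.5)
Your proof is correct, but it takes a different route from the paper. The paper checks closure directly in the ``type'' calculus: if $g$ has type $\sigma$ and $h$ has type $\tau$, then $gh$ has type $\sigma\tau$ with $(gh)_{\sigma\tau}=g_\sigma h_\tau$; $g^{-1}$ has type $\sigma^{-1}$ with $(g^{-1})_{\sigma^{-1}}=(g_\sigma)^{-1}$; and $1$ has type $\id$. You instead identify $\Delta(T,\sigma)$ with the set of elements of $G^r$ that act on $T$ as $\sigma$ under $(g,h)k=\wh g k h^{-1}$. Then $\Delta(T,H)=\rho^{-1}(H)$ for the restriction homomorphism $\rho$ from the setwise stabilizer of $T$ to $\Sigma_T$. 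Both inclusions are verified correctly, and your remark that surjectivity of $\sigma$ forces $\gamma T=T$ is exactly the point needing care.

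The paper's computation records explicit product formulas, which it reuses in the proof of Theorem~\ref{thm: subgroup lower bound} to get $(g,g_\sigma)^n=(g^n,(g^n)_{\id})$. Your version yields this just as well, since $\rho\bigl((g,g_\sigma)^n\bigr)=\sigma^n$. What your version buys is a conceptual meaning for ``type $\sigma$'': $g$ has type $\sigma$ exactly when some $(g,h)$ acts on $T$ as $\sigma$. This makes several later results nearly formal:
\begin{enumerate}
\item Lemma~\ref{lem: intersection}: an element acting as $\sigma$ on $S$ and as $\tau$ on $T$ exists only if $\sigma$ and $\tau$ agree on $S\cap T$, and it then acts as $\sigma\cup\tau$.
\item The conjugation step of Proposition~\ref{prop: family}: $\delta$ acts on $T$ as $\sigma$ if and only if $\gamma\delta\gamma^{-1}$ acts on $\gamma T$ as $\gamma\sigma\gamma^{-1}$.
\item Lemma~\ref{lem: measure stabilizer}: the stabilizer of $\mu$ is $\rho^{-1}(\Sigma_\alpha)$.
\end{enumerate}

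As for $T=\varnothing$, the paper's own proof also tacitly excludes it, since $g_\sigma$ and $1_{\id}$ are undefined there. Excluding it is the better of your two options. The convention $\Delta(\varnothing,H)=G^r$, fed into Definition~\ref{def: permutational family}, would put $G^r$ into $\P_r$. For infinite $G$, the group $G^r$ acts transitively on the vertices, so it has no fixed point in $\Delta^{G^{r-1}}$, which would contradict Theorem~\ref{thm: classifying model}.
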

\begin{proof}
Given  $ g$ of type $\sigma$ and  $h$ of type $\tau$, a straightforward verification shows that $gh$ is of type $\sigma\tau$ and that $g_\sigma h_\tau=(gh)_{\sigma\tau}$. It is likewise straightforward to verify that $g^{-1}$ is of type $\sigma^{-1}$ and that $(g_\sigma)^{-1}=(g^{-1})_{\sigma^{-1}}$. The claim now follows upon observing that the set in question is nonempty, since $1$ is of type $\id$ and $1_\mathrm{id}=1$.
\end{proof}

The example of greatest interest for us is the case in which $H$ is the subgroup $\Sigma_\alpha\leq \Sigma_T$ preserving the blocks of a partition $\alpha$ of $T$. After choosing an ordering $T\cong\{1,\ldots,n\}$, the subgroups of the form $\Sigma_\alpha$ are precisely the conjugates of Young subgroups of $\Sigma_{n}$.

\begin{definition}\label{def: permutational family}
The $r$th \emph{permutational family} $\P_r=\P_r(G)$ is the set comprising the trivial subgroup and all subgroups of $G^{r}$ of the form $\Delta(\T,\Sig_{\alp})$, where $T$ ranges over finite subsets of $G^{r-1}$ and $\alpha$ over partitions of $T$.
\end{definition}

Our language is justified by the following result.

\begin{proposition}\label{prop: family}
For any $r>1$, the set $\P_r$ is a family of subgroups.
\end{proposition}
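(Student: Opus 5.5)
The plan is to reinterpret the subgroups $\Delta(T,\Sigma_\alpha)$ via the $G^r$-action on $G^{r-1}$ from Lemma~\ref{lem: cosets}. Once they are seen as intersections of setwise stabilizers, closure under conjugation and under finite intersection becomes a matter of bookkeeping with finite sets.

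\emph{Step 1 (key identification).} For $T\subseteq G^{r-1}$ and $\sigma\in\Sigma_T$, I claim
\[\Delta(T,\sigma)=\{\gamma\in G^r \mid \gamma x=\sigma(x)\text{ for all }x\in T\}.\]
For the inclusion $\subseteq$, take $g$ of type $\sigma$. Then for every $x\in T$,
\[(g,g_\sigma)x=\wh g\,x\,g_\sigma^{-1}=\wh g\,x\,x^{-1}\wh g^{-1}\sigma(x)=\sigma(x).\]
For $\supseteq$, suppose $(g,h)x=\sigma(x)$ for all $x\in T$. Solving gives $h=\sigma(x)^{-1}\wh g x$ for every $x\in T$, so this expression is independent of $x$. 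Equating the expressions for $x$ and $y$ and rearranging gives exactly the defining identity of type $\sigma$, and then $h=g_\sigma$.

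Taking the union over $\sigma\in\Sigma_\alpha$, it follows that $\Delta(T,\Sigma_\alpha)$ is the set of $\gamma\in G^r$ that map each block of $\alpha$ onto itself. Such a $\gamma$ maps $T$ into $T$ injectively, and $T$ is finite, so it permutes $T$ preserving blocks. Hence
\[\Delta(T,\Sigma_\alpha)=\bigcap_{B\in\alpha}\mathrm{Stab}_{G^r}(B),\]
where $\mathrm{Stab}_{G^r}(B)$ denotes the setwise stabilizer of the finite set $B\subseteq G^{r-1}$. Conversely, every finite intersection of setwise stabilizers of finite subsets of $G^{r-1}$ has this form, provided those subsets are pairwise disjoint: take $T$ to be their union and $\alpha$ the partition they form. (If $T$ is empty, the intersection is all of $G^r$.)

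\emph{Step 2 (conjugation).} For $\gamma\in G^r$ we have $\gamma\,\mathrm{Stab}(B)\,\gamma^{-1}=\mathrm{Stab}(\gamma B)$. Therefore
\[\Delta(T,\Sigma_\alpha)^\gamma=\Delta(\gamma T,\Sigma_{\gamma\alpha}),\]
where $\gamma\alpha$ is the translated partition of the finite set $\gamma T$. The trivial subgroup is clearly stable under conjugation.

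\emph{Step 3 (intersections).} The trivial subgroup intersected with anything is trivial, so it suffices to intersect $\Delta(T,\Sigma_\alpha)$ with $\Delta(T',\Sigma_\beta)$. Let $S=T\cup T'$, and partition $S$ into atoms: two points of $S$ lie in the same atom if and only if they belong to exactly the same blocks of $\alpha$ and of $\beta$. The resulting partition $\omega$ has the following properties.
\begin{enumerate}
\item Every block of $\alpha$ and of $\beta$ is a union of atoms, so $\bigcap_{A\in\omega}\mathrm{Stab}(A)$ is contained in both subgroups.
\item Conversely, every atom is a finite Boolean combination of blocks of $\alpha$ and $\beta$ together with $T$ and $T'$. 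An element stabilizing finite sets setwise also stabilizes their intersections and differences, and $T,T'$ are unions of blocks. Hence the intersection of the two subgroups lies in $\bigcap_{A\in\omega}\mathrm{Stab}(A)$.
\end{enumerate}
By Step 1 the intersection is therefore $\Delta(S,\Sigma_\omega)\in\P_r$.

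I expect Step 1 to be the main obstacle. It requires careful handling of the non-abelian bookkeeping in the identity $\wh g\,x\,g_\sigma^{-1}=\sigma(x)$, and of the converse showing that the stabilizer condition forces $g$ to be of type $\sigma$. Once Step 1 is in place, Steps 2 and 3 are formal.
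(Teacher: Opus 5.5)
Your proof is correct, but it takes a different route from the paper.

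\textbf{The paper's route.} The paper stays in the algebraic language of ``type $\sigma$.''
\begin{itemize}
\item It first computes $\Delta(S,\sigma)\cap\Delta(T,\tau)$ for single permutations. The result is $\Delta(S\cup T,\sigma\cup\tau)$ when $\sigma$ and $\tau$ agree on $S\cap T$, and empty otherwise.
\item It then checks combinatorially that $\sigma\cup\tau$ preserves the blocks of the meet $\alpha\wedge\beta$, and assembles these into $\Delta(S,\Sigma_\alpha)\cap\Delta(T,\Sigma_\beta)=\Delta(S\cup T,\Sigma_{\alpha\wedge\beta})$.
\item Conjugation is a separate direct verification: $g$ has type $\sigma$ if and only if $g^h$ has type $\widetilde\sigma$ on $\widetilde T=\widehat h\,T k^{-1}$.
\end{itemize}

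\textbf{Your route.} You front-load the identification of $\Delta(T,\sigma)$ with the set of elements of $G^r$ acting on $T$ as $\sigma$. Hence $\Delta(T,\Sigma_\alpha)$ is the intersection of the setwise stabilizers of the blocks of $\alpha$. This is essentially the computation the paper only carries out later, in the proof of Lemma~\ref{lem: measure stabilizer}. Once it is in place:
\begin{itemize}
\item Lemma~\ref{lem: intersection} becomes a tautology: $\gamma$ restricts to $\sigma$ on $S$ and to $\tau$ on $T$ exactly when it restricts to $\sigma\cup\tau$ on $S\cup T$.
\item Your atoms are exactly the blocks of the paper's meet $\alpha\wedge\beta$ lying inside $T\cup T'$.
\item Conjugation reduces to $\gamma\,\mathrm{Stab}(B)\,\gamma^{-1}=\mathrm{Stab}(\gamma B)$.
\end{itemize}

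\textbf{What each buys.} Your approach does the non-abelian bookkeeping once, in Step 1, and gets several later results almost for free:
\begin{itemize}
\item Proposition~\ref{prop: subgroup}: $\Delta(T,H)$ is the preimage of $H$ under restriction $\mathrm{Stab}(T)\to\Sigma_T$.
\item Lemma~\ref{lem: measure stabilizer}: the stabilizer of $\mu$ is the intersection of the stabilizers of its level sets.
\item The fact, used in Proposition~\ref{prop: diagonal equivalence}, that the subgroups $\Delta(T,\id)$ form a family: they are just pointwise stabilizers of finite sets.
\end{itemize}
The paper's approach keeps finer information at the level of individual permutations, including the criterion for an empty intersection, without invoking the action.

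\textbf{A minor point.} Your parenthetical about $T=\varnothing$ is harmless but unused. The element $g_\sigma$ is undefined for empty $T$, and $G^r\notin\P_r$ when $G$ is infinite. Simply take $T$ nonempty throughout, which your Steps~2 and~3 preserve.
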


For the proof of this result, we begin with the following lemma.

\begin{lemma}\label{lem: intersection}
Given subsets $S,T\subseteq G^{r-1}$ and permutations $\sigma\in \Sigma_{S}$ and $\tau\in\Sigma_{T}$, we have
\[
\Delta(S,\sig)\cap\Delta(T,\ta)=
     \begin{cases}
\Delta(S\cup T,\sig\cup\ta) & \text{ if }\sigma|_{S\cap T}=\tau|_{S\cap T}
        \\
        \varnothing & \text{ otherwise},
    \end{cases}   
\]
where $\sigma\cup\tau\in\Sigma_{S\cup T}$ is the unique permutation on $S\cup T$ restricting to $\sigma$ on $S$ and to $\tau$ on $T$.  
\end{lemma}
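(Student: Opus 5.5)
The plan is to reformulate membership in $\Delta(S,\sigma)$ in terms of the $G^r$-action on $G^{r-1}$ introduced before Lemma~\ref{lem: cosets}. The lemma then reduces to the observation that a group element acting on $G^{r-1}$ is a bijection, so its restrictions to $S$ and to $T$ determine its restriction to $S\cup T$. Concretely, I will first prove the following.

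\emph{Claim.} For any $S\subseteq G^{r-1}$ and $\sigma\in\Sigma_S$,
\[
\Delta(S,\sigma)=\{\gamma\in G^r \mid \gamma x=\sigma(x)\text{ for all }x\in S\},
\]
where $(g,k)x=\wh g x k^{-1}$.

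For the inclusion $\subseteq$, take $(g,g_\sigma)$ with $g$ of type $\sigma$. The defining equation $g_\sigma=\sigma(x)^{-1}\wh g x$ rearranges to $\sigma(x)=\wh g x g_\sigma^{-1}=(g,g_\sigma)x$.

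For the inclusion $\supseteq$, suppose $(g,k)$ satisfies $\wh g x k^{-1}=\sigma(x)$ for all $x\in S$. Then
\[
\sigma(x)\sigma(y)^{-1}\wh g=\wh g x k^{-1} k y^{-1}\wh g^{-1}\wh g=\wh g x y^{-1},
\]
so $g$ is of type $\sigma$. Moreover $k=\sigma(x)^{-1}\wh g x=g_\sigma$, so $(g,k)\in\Delta(S,\sigma)$. (If $S=\varnothing$, both sides are all of $G^r$, consistent with the convention that every $g$ has type $\id_\varnothing$; this edge case should be checked against the definition but causes no trouble.)

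With the claim in hand, $\Delta(S,\sigma)\cap\Delta(T,\tau)$ is the set of $\gamma\in G^r$ whose action agrees with $\sigma$ on $S$ and with $\tau$ on $T$.

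Suppose the intersection contains some $\gamma$. Then for $x\in S\cap T$ we have $\sigma(x)=\gamma x=\tau(x)$, so $\sigma$ and $\tau$ agree on $S\cap T$; this gives the "otherwise" case by contraposition. In the compatible case, the function $\sigma\cup\tau$ on $S\cup T$ is well defined and coincides with $x\mapsto\gamma x$ there. It is therefore injective, because $\gamma$ acts by a bijection of $G^{r-1}$, and its image is $\sigma(S)\cup\tau(T)=S\cup T$. Hence $\sigma\cup\tau\in\Sigma_{S\cup T}$, and the claim applied to $S\cup T$ identifies the intersection with $\Delta(S\cup T,\sigma\cup\tau)$. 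Conversely, whenever $\sigma\cup\tau$ is a permutation, every element of $\Delta(S\cup T,\sigma\cup\tau)$ restricts correctly to $S$ and to $T$, so it lies in the intersection; thus the two sets coincide, including when both are empty.

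The only delicate point is the meaning of $\sigma\cup\tau$ when $\sigma$ and $\tau$ agree on $S\cap T$ but the glued function fails to be injective, for instance when some $x\in S\setminus T$ and $y\in T\setminus S$ satisfy $\sigma(x)=\tau(y)$. The argument above shows that the intersection is then empty. I would record this explicitly: the first case of the formula is to be read with the proviso that $\sigma\cup\tau$ is a permutation, and otherwise the intersection is empty. This reading is harmless for the application to Proposition~\ref{prop: family}, where one takes unions over $\Sigma_\alpha$ and $\Sigma_\beta$ and discards empty pieces.
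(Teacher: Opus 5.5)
Your proof is correct, and it takes a somewhat different route from the paper's. The paper works directly with the type equations. An element of the intersection has the form $(g,\sigma(x)^{-1}\wh g x)=(g,\tau(x)^{-1}\wh g x)$, which forces $\sigma=\tau$ on $S\cap T$. The paper then asserts that $g$ is of type $\sigma\cup\tau$ if and only if it is of type $\sigma$ and of type $\tau$, with $g_\sigma=g_{\sigma\cup\tau}=g_\tau$.

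You instead first identify $\Delta(S,\sigma)$ with the set of $\gamma\in G^r$ acting on $S$ as $\sigma$. This is essentially the computation in the proof of Lemma~\ref{lem: measure stabilizer}. After that, the lemma is a statement about restricting a single bijection of $G^{r-1}$.

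This buys robustness in two places that the paper's few lines pass over. First, when $S\cap T=\varnothing$ the paper's ``if and only if'' is not literally true. For $S=\{x\}$, $T=\{y\}$ and identity permutations, every $g$ has both types, while type $\id_{\{x,y\}}$ requires $\wh g$ to commute with $xy^{-1}$. So one must also use $g_\sigma=g_\tau$. Your characterization records both coordinates at once, so this is automatic.

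Second, your caveat that $\sigma\cup\tau$ might fail to be a permutation is genuine, but only when $S\cap T$ is infinite. If $S\cap T$ is finite, compatibility gives $\sigma(S\cap T)=\tau(S\cap T)\subseteq S\cap T$, hence equality. So $\sigma$ and $\tau$ preserve $S\setminus T$ and $T\setminus S$, and the glued map is a bijection. In the finite setting used to define $\P_r$, the statement therefore holds exactly as written, and you could say so instead of leaving the proviso open.

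The paper's version, for its part, is shorter and stays entirely within the type formalism.
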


\begin{proof}
Given $x\in S\cap T$, any element of the intersection in question has the form 
\[(g, \sigma(x)^{-1}\wh gx)=(g,\tau(x)^{-1}\wh g x),\] and rearranging yields the equation $\sigma(x)=\tau(x)$. Thus, we may assume that $\sigma|_{S\cap T}=\tau|_{S\cap T}$, writing  $\rho=\sigma\cup\tau$ for brevity. In this case, an element $g$ is of type $\rho$ if and only if $g$ is both of type $\sigma$ and of type $\tau$, and $(g,g_\sigma)=(g,g_\rho)=(g,g_\tau)$ for such $g$, implying the claim.
\end{proof}

Recall that the \emph{meet} of the partitions $\alpha$ and $\beta$ of a common set $X$, denoted $\alpha \wedge \beta$, is the partition of $X$ formed by the collection of all non-empty intersections of blocks of $\alpha$ with blocks of $\beta$; equivalently, the meet is the greatest lower bound of $\alpha$ and $\beta$ in the lattice of partitions. If $\alpha$ and $\beta$ are partitions of the respective subsets $S$ and $T$ of $X$, we abusively write $\alpha\wedge \beta$ for the meet of $\alpha\cup\{X\setminus S\}$ and $\beta\cup\{X\setminus T\}$.

\begin{lemma}\label{lem: meet inclusion}
Fix a set $X$, finite subsets $S,T\subseteq X$ with respective partitions $\alpha$ and $\beta$, and permutations $\sigma\in \Sigma_S$ and $\tau\in \Sigma_T$. If $\sigma|_{S\cap T}=\tau|_{S\cap T}$, then $\sigma\cup\tau\in \Sigma_{\alpha\wedge\beta}$.
\end{lemma}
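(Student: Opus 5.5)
We read the statement with the implicit hypotheses $\sigma\in\Sigma_\alpha$ and $\tau\in\Sigma_\beta$, which is how it is used in the proof of Proposition~\ref{prop: family}. We interpret membership in $\Sigma_{\alpha\wedge\beta}$ for a permutation of $S\cup T$ as preserving every block of $\alpha\wedge\beta$ meeting $S\cup T$; equivalently, we extend $\rho=\sigma\cup\tau$ by the identity on $X\setminus(S\cup T)$. The plan is a direct block-by-block check. The blocks of $\alpha\wedge\beta$ contained in $S\cup T$ are of three kinds:
\begin{enumerate}
\item $A\cap B$ with $A\in\alpha$, $B\in\beta$, which lie in $S\cap T$;
\item $A\setminus T$ with $A\in\alpha$, which lie in $S\setminus T$;
\item $B\setminus S$ with $B\in\beta$, which lie in $T\setminus S$.
\end{enumerate}
The remaining block $X\setminus(S\cup T)$ is fixed by the extension by the identity. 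It therefore suffices to show that $\rho$ maps each block of these three kinds into itself. Since $\rho$ is injective and the blocks are finite, it then maps each block onto itself.

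The key preliminary step, and the only point requiring any thought, is to show that $\sigma$ preserves $S\cap T$, and hence also $S\setminus T$. On $S\cap T$ we have $\sigma=\tau$. Thus $\sigma(S\cap T)\subseteq S$ because $\sigma\in\Sigma_S$, and $\sigma(S\cap T)=\tau(S\cap T)\subseteq T$ because $\tau\in\Sigma_T$. Hence $\sigma(S\cap T)\subseteq S\cap T$. As $S\cap T$ is finite and $\sigma$ is injective, this inclusion is an equality. Since $\sigma$ is a bijection of $S$, it follows that $\sigma(S\setminus T)=S\setminus T$. By the symmetric argument, $\tau(T\setminus S)=T\setminus S$. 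Finiteness is used exactly here.

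With this in hand, each block type is handled as follows.
\begin{enumerate}
\item For $x\in A\cap B$, we have $\rho(x)=\sigma(x)=\tau(x)$. This lies in $A$ because $\sigma$ preserves the blocks of $\alpha$, and in $B$ because $\tau$ preserves the blocks of $\beta$.
\item For $x\in A\setminus T$, we have $\rho(x)=\sigma(x)\in A$, and $\sigma(x)\notin T$ by the preliminary step.
\item For $x\in B\setminus S$, the argument is symmetric, using $\tau$.
\end{enumerate}
Hence $\rho$ preserves every block of $\alpha\wedge\beta$, which gives $\sigma\cup\tau\in\Sigma_{\alpha\wedge\beta}$. I expect no real obstacle beyond recognizing that $\sigma$ must stabilize $S\cap T$. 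Without this, a point of $A\setminus T$ could a priori be sent into $T$, and the argument would fail.
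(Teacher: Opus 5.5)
Your proposal is correct and follows the paper's proof: the same three-way classification of the blocks of $\alpha\wedge\beta$ (inside $S\cap T$, $S\setminus T$, $T\setminus S$) and the same case-by-case check. You are more careful than the paper in three places. You make explicit the tacit hypotheses $\sigma\in\Sigma_\alpha$, $\tau\in\Sigma_\beta$. You spell out the finiteness argument that $\sigma$ stabilizes $S\cap T$, which the paper dismisses as ``by assumption''. And you correctly identify the outer blocks as $A\setminus T$ and $B\setminus S$, where the paper calls them blocks of $\alpha$ and $\beta$.
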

\begin{proof}
The claim is that $\sigma\cup \tau$ preserves the blocks of $\alpha\wedge\beta$. Now, these blocks are of three types: first, blocks of $\alpha$ lying in $S\setminus T$; second, blocks of $\beta$ lying in $T\setminus S$; and, third, intersections of blocks of $\alpha$ and blocks of $\beta$ lying in $S\cap T$. In the first case, $\sigma\cup\tau$ acts as $\sigma$, which preserves the blocks of $\alpha$ and preserves $S\setminus T$ by assumption. The second case is similar. In the third case, $\sigma\cup \tau$ acts equally as $\sigma$ and as $\tau$, hence preserves the blocks of $\alpha$ and of $\beta$, and preserves $S\cap T$ by assumption.
\end{proof}

\begin{corollary}\label{cor: intersection meet}
Given finite subsets $S,T\subseteq G^r$ with respective partitions $\alpha$ and $\beta$, we have $\Delta(S,\Sigma_\alpha)\cap \Delta(T,\Sigma_\beta)=\Delta(S\cup T, \Sigma_{\alpha\wedge \beta})$.
\end{corollary}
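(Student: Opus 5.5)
The plan is to prove the two inclusions separately, using Lemma~\ref{lem: intersection} to rewrite the left-hand side as a union of pieces and Lemma~\ref{lem: meet inclusion} to compare these pieces with the right-hand side. (The statement writes $S,T\subseteq G^r$; this should be read as $S,T\subseteq G^{r-1}$, in keeping with the definitions.)

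First expand the intersection. By definition $\Delta(S,\Sigma_\alpha)=\bigcup_{\sigma\in\Sigma_\alpha}\Delta(S,\sigma)$, and likewise for $T$, so intersections distribute over unions to give
\[
\Delta(S,\Sigma_\alpha)\cap\Delta(T,\Sigma_\beta)=\bigcup_{\sigma\in\Sigma_\alpha,\ \tau\in\Sigma_\beta}\Delta(S,\sigma)\cap\Delta(T,\tau).
\]
By Lemma~\ref{lem: intersection}, a term in this union is empty unless $\sigma|_{S\cap T}=\tau|_{S\cap T}$, in which case it equals $\Delta(S\cup T,\sigma\cup\tau)$. Lemma~\ref{lem: meet inclusion}, applied with $X=S\cup T$, then gives $\sigma\cup\tau\in\Sigma_{\alpha\wedge\beta}$. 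Hence the left side is contained in $\Delta(S\cup T,\Sigma_{\alpha\wedge\beta})$.

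For the reverse inclusion, take $\rho\in\Sigma_{\alpha\wedge\beta}$, a permutation of $S\cup T$. The blocks of $\alpha\wedge\beta$ refine the partition $\{S\setminus T,\ T\setminus S,\ S\cap T\}$ of $S\cup T$, so $\rho$ preserves each of these three sets, and in particular preserves $S$ and $T$. Set $\sigma=\rho|_S$ and $\tau=\rho|_T$. Then $\sigma\cup\tau=\rho$, and the two restrictions automatically agree on $S\cap T$.

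It remains to check that $\sigma\in\Sigma_\alpha$. Each block $A$ of $\alpha$ is the disjoint union of the blocks $A\cap B$ with $B$ a block of $\beta$, together with $A\setminus T$. Each of these pieces is either a block of $\alpha\wedge\beta$ or empty, so $\rho$ preserves each piece, and therefore $\rho$ preserves $A$. The symmetric argument shows $\tau\in\Sigma_\beta$. Applying Lemma~\ref{lem: intersection} once more gives $\Delta(S\cup T,\rho)=\Delta(S,\sigma)\cap\Delta(T,\tau)$, which lies in the left side, completing the reverse inclusion.

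The only delicate step is the bookkeeping with the abusive meet, namely checking that the added blocks $X\setminus S$ and $X\setminus T$ intersect to produce exactly the three types of blocks used above. Here $X=S\cup T$, so $(X\setminus S)\cap(X\setminus T)$ is empty and contributes no block. Everything else is formal.
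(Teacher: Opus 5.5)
Your proposal is correct and follows the paper's proof. The forward inclusion is the same union expansion combined with Lemmas~\ref{lem: intersection} and~\ref{lem: meet inclusion}. Your reverse inclusion, which restricts $\rho\in\Sigma_{\alpha\wedge\beta}$ to $S$ and $T$ and reapplies Lemma~\ref{lem: intersection}, is an unpacked version of the paper's observation that $\Sigma_{\alpha\wedge\beta}\leq\Sigma_{\overline\alpha}$ for $\overline\alpha=\alpha\cup\{T\setminus S\}$ together with $\Delta(S\cup T,\Sigma_{\overline\alpha})\subseteq\Delta(S,\Sigma_\alpha)$. You are also right that $S,T$ should be subsets of $G^{r-1}$.
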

\begin{proof}
The rightward inclusion follows from Lemmas~\ref{lem: intersection} and~\ref{lem: meet inclusion}, in view of the equality
\[
\Delta(S,\Sig_{\alp})\cap \Delta(T,\Sig_{\bet})=\bigcup_{\substack{\sig\in\Sig_{\alp}\\ \ta\in\Sig_{\bet}}} \Delta(S,\sig)\cap \Delta(T,\ta).
\] On the other hand, writing $\overline \alpha=\alpha\cup\{T\setminus S\}$, we have $\Sigma_{\alpha\wedge\beta}\leq \Sigma_{\overline\alpha}$ and $\Delta(S\cup T,\Sigma_{\overline\alpha})\subseteq\Delta(S, \Sigma_\alpha)$, implying the reverse inclusion.
\end{proof}

\begin{proof}[Proof of Proposition~\ref{prop: family}] Closure under intersection is immediate from Corollary~\ref{cor: intersection meet}. As for conjugation, fix a finite subset $T\subseteq G^{r-1}$, a permutation $\sigma\in \Sigma_T$, and an arbitrary element $(h,k)\in G\times G^{r-1}$. Defining $\wt T=\wh hTk^{-1}$, we obtain a permutation $\wt\sigma\in \Sigma_{\wt T}$ by the formula $\wt \sigma(\wh hxk^{-1})=\wh h\sigma(x)k^{-1}$. With these definitions, a direct verification shows that $g$ is of type $\sigma$ if and only if $g^h$ is of type $\wt \sigma$, and that $(g,g_\sigma)^{(h,k)}=(g^h, (g^h)_{\wt \sigma})$, whence $\Delta(T,\sigma)^{(h,k)}=\Delta(\wt T,\wt \sigma)$. In the same way, a partition $\alpha$ of $T$ determines a partition $\wt \alpha$ of $\wt T$, and it follows that $\Delta(T,\Sigma_{\alpha})^{(h,k)}=\Delta(\wt T,\Sigma_{\wt \alpha})\in \P_r$, as desired.
\end{proof}

To conclude, we clarify the relationship between $\P_r$ and the earlier defined diagonal family $\D_r$.

\begin{proposition}\label{prop: diagonal equivalence}
A subgroup of $G^r$ lies in $\D_r$ if and only if it is of the form $\Delta(T,\id)$ for some finite subset $T\subseteq G^{r-1}$. In particular, the family $\D_r$ is a subfamily of $\P_r$.
\end{proposition}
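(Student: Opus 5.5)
Unwinding the definitions is enough here. For $T=\varnothing$ there are no conditions, so every $g$ is of type $\id$; the value $g_{\id}$ is not defined, and we adopt the convention that $\Delta(\varnothing,\id)$ is the trivial subgroup, which lies in $\D_r$. For nonempty $T$ and $\sigma=\id$, the condition $xy^{-1}\wh g=\wh g xy^{-1}$ says that $\wh g$ commutes with every $xy^{-1}$, and then $g_{\id}=x^{-1}\wh g x$ for any $x\in T$.

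\emph{Singleton case.} Take $T=\{x\}$. Then every $g$ is of type $\id$, and
\[\Delta(\{x\},\id)=\{(g,x^{-1}\wh g x)\}.\]
This is the conjugate of the diagonal $\Delta_G=\{(g,\wh g)\}$ by $(1,x^{-1})$, so it lies in $\D_r$. Conversely, conjugating the diagonal by $(h,k)\in G\times G^{r-1}$ gives $\{(g^h, k\wh g k^{-1})\}$. Reparametrizing with $g'=g^h$, this equals $\{(g', x^{-1}\wh{g'}x)\}$ with $x=\wh h k^{-1}$, which is $\Delta(\{x\},\id)$. This is also the $\sigma=\id$ case of the conjugation computation in the proof of Proposition~\ref{prop: family}. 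So the conjugates of the diagonal are exactly the subgroups $\Delta(\{x\},\id)$, $x\in G^{r-1}$.

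\emph{Finite subsets.} A nonempty finite $T$ gives $\Delta(T,\id)=\bigcap_{x\in T}\Delta(\{x\},\id)$. This is Lemma~\ref{lem: intersection} iterated, since identity permutations always agree on overlaps. Hence every $\Delta(T,\id)$ is a finite intersection of conjugates of the diagonal and lies in $\D_r$.

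\emph{Converse.} Let $\mathcal{C}$ be the set of subgroups of the form $\Delta(T,\id)$, including the trivial one. Since $\D_r$ is the smallest family containing $1$ and the diagonal, it suffices to show that $\mathcal{C}$ is a family.
\begin{itemize}
\item Closure under intersection: Lemma~\ref{lem: intersection} gives $\Delta(S,\id)\cap\Delta(T,\id)=\Delta(S\cup T,\id)$.
\item Closure under conjugation: the proof of Proposition~\ref{prop: family} gives $\Delta(T,\id)^{(h,k)}=\Delta(\wt T,\id)$, because $\wt{\id}=\id$.
\end{itemize}
Intersections with the trivial subgroup stay trivial, so $\mathcal{C}$ is a family and $\mathcal{C}=\D_r$.

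\emph{The subfamily claim.} Take $\alpha$ to be the partition of $T$ into singletons. Then $\Sigma_\alpha$ is trivial, so $\Delta(T,\id)=\Delta(T,\Sigma_\alpha)\in\P_r$, and hence $\D_r\subseteq\P_r$.

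The main obstacle is the bookkeeping around the empty $T$ and the trivial subgroup, which needs the convention stated at the start. Beyond that, one must check that the conjugation formula reparametrizes correctly, that is, that $(g,g_\sigma)^{(h,k)}$ ranges over all of $\Delta(\wt T,\wt\sigma)$. That surjectivity holds because $g\mapsto g^h$ is a bijection.
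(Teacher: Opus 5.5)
Your proof is correct. For the inclusion $\{\Delta(T,\id)\}\subseteq\D_r$ it takes a different route from the paper; the reverse inclusion is handled the same way.

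\textbf{Direction $\Delta(T,\id)\in\D_r$.} The paper quotes the Farber--Grant--Lupton--Oprea and Farber--Oprea description of the nontrivial members of $\D_r$ as the subgroups $H_{x,S}=\{(g,x\wh g x^{-1}) \mid \wh g\in Z(S)\}$. It then matches $\Delta(T,\id)$ with such a subgroup by taking $S=\{xy^{-1}\mid x,y\in T\}$. You argue directly instead. You note that $\Delta(\{x\},\id)$ is the conjugate of the diagonal by $(1,x^{-1})$. You then use Lemma~\ref{lem: intersection}, iterated, to get $\Delta(T,\id)=\bigcap_{x\in T}\Delta(\{x\},\id)$. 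So $\Delta(T,\id)$ is visibly a finite intersection of conjugates of the diagonal.

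\textbf{Direction $\D_r\subseteq\{\Delta(T,\id)\}$.} Here you argue exactly as the paper does. The subgroups $\Delta(T,\id)$ form a family, by Lemma~\ref{lem: intersection} and the conjugation formula in the proof of Proposition~\ref{prop: family}, and this family contains the diagonal.

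\textbf{Comparison.} Your argument is fully self-contained. Together with the reverse inclusion, it re-derives the cited classification rather than relying on it. The paper's route instead gives an explicit dictionary between the new notation $\Delta(T,\id)$ and the subgroups $H_{x,S}$ used in the earlier literature.

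\textbf{The trivial subgroup.} Your explicit treatment of this case is a genuine improvement over the paper. If $G$ has nontrivial center, then $\Delta(T,\id)$ contains $(z,\wh z)$ for every central $z$, so it is never trivial when $T\neq\varnothing$; $G=\mathbb{Z}$ is an example. The statement therefore has to be read either with your convention or as a statement about nontrivial subgroups. The paper's citation implicitly takes the second reading.
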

\begin{proof}
It is shown in~\cite{FGLO} for $r=2$ and~\cite{FO} in general that a non-trivial subgroup of $G^{r}$ lies in $\D_r$ if and only if it is of the form \[
H_{x,S}:=\left\{\left(g,x\wh gx^{-1}\right)\ \middle| \ \wh g\in Z(S) \right\} \leq G^{r}.
\] for some $x\in G^{r-1}$ and finite $S\subseteq G^{r-1}$. For the ``if direction'', choose a finite set $T\subseteq G^{r-1}$, and set $S=\{xy^{-1}\mid x,y\in T\}$. Then we have $Z(T,\id)=\{g\in G\mid \widehat g\in Z(S)\}$ and $\Delta(T,\id)=H_{x,S}$ by inspection. For the ``only if'' direction, it follows from Lemma~\ref{lem: intersection} and the proof of Proposition~\ref{prop: family} concerning conjugates that the set of subgroups of the form $\Delta(T,\id)$ is a family of subgroups. Thus, to establish that every member of $\D_r$ lies in this family, it suffices to show that the diagonal subgroup does so, but the diagonal subgroup is $\Delta(\{\,\widehat 1\,\}, \id)$ by inspection.
\end{proof}

\subsection{A model for $\P_r$} In this section, we consider the simplex $\Delta^{G^{r-1}}$ spanned by the set $G^{r-1}$ with the $G^r$-action induced by the action on the set of vertices defined in the previous section. The main technical result of the paper is the following.

\begin{theorem}\label{thm: classifying model}
The barycentric subdivision of the simplex $\Delta^{G^{r-1}}$ is a model for $\P_r$.
\end{theorem}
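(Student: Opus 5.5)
The plan is to apply Lemma~\ref{lem: barycentric}, exactly as in the proof of Proposition~\ref{prop: simplex over G}. The $G^r$-action on $\Delta^{G^{r-1}}$ is simplicial and affine linear, since it is induced by a permutation action on the vertex set. It therefore suffices to check the two criteria of Theorem~\ref{thm: luck} for $\Delta^{G^{r-1}}$ itself: every stabilizer lies in $\P_r$, and $(\Delta^{G^{r-1}})^K$ is weakly contractible for every $K\in\P_r$.

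For the isotropy criterion, I take a point $p=\sum_{x\in T}t_x x$ with $T\subseteq G^{r-1}$ finite and all $t_x>0$. Let $\alpha$ be the partition of $T$ into level sets of the function $x\mapsto t_x$. An element $(g,h)\in G\times G^{r-1}$ fixes $p$ if and only if the map $x\mapsto \wh g x h^{-1}$ restricts to a permutation $\sigma$ of $T$ preserving the weights, that is, $\sigma\in\Sigma_\alpha$. The key computation is that such elements are exactly those of $\Delta(T,\Sigma_\alpha)$.
\begin{itemize}
\item If $\wh g x h^{-1}=\sigma(x)$ for all $x\in T$, then $\sigma(x)\sigma(y)^{-1}=\wh g xy^{-1}\wh g^{-1}$. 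Hence $g$ is of type $\sigma$, and $h=\sigma(x)^{-1}\wh g x=g_\sigma$.
\item Conversely, if $g$ is of type $\sigma\in\Sigma_\alpha$ and $h=g_\sigma$, then $\wh g x g_\sigma^{-1}=\sigma(x)$ by the definition of $g_\sigma$.
\end{itemize}
The orientation of $\sigma$ versus $\sigma^{-1}$ in Definition~\ref{def: type sigma} may need adjusting, but $\Sigma_\alpha$ is a group, so this is harmless. Thus $G^r_p=\Delta(T,\Sigma_\alpha)\in\P_r$.

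For the fixed-point criterion, Lemma~\ref{lem: convex} shows every fixed set is convex, so it suffices to show it is nonempty. If $K$ is trivial, the fixed set is the whole simplex. Otherwise $K=\Delta(T,\Sigma_\alpha)$, and I exhibit a fixed point: take the point $p$ above, with $t_x$ constant on each block of $\alpha$ and distinct constants on distinct blocks. By the isotropy computation its stabilizer is precisely $\Delta(T,\Sigma_\alpha)=K$, so $p$ lies in the fixed set. Convexity then gives contractibility, and hence weak contractibility.

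I expect the main obstacle to be the isotropy computation. Specifically, one must check carefully that an element fixing $p$ must map $T$ onto $T$, so that no vertex outside $T$ is hit; this follows because the action permutes vertices and preserves barycentric coordinates. One must also verify that the resulting permutation is the one for which $g$ has type $\sigma$ with the paper's convention. A secondary point is the infinite-dimensional simplex topology. Each point still has finite support, and $\Delta^{G^{r-1}}$ is a simplicial complex in the sense Lemma~\ref{lem: barycentric} requires, exactly as was used for $\Delta^G$ in Proposition~\ref{prop: simplex over G}.
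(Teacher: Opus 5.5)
Your proposal is correct and follows the paper's proof essentially verbatim: the paper likewise reduces via Lemma~\ref{lem: barycentric} to the criteria of Theorem~\ref{thm: luck}, with your isotropy computation being Lemma~\ref{lem: measure stabilizer} and your point with support $T$ and level-set partition $\alpha$ being Lemma~\ref{lem: designer measure}, after which convexity finishes the argument. Your worry about the orientation of $\sigma$ is unfounded, since your computation $\sigma(x)\sigma(y)^{-1}\wh g=\wh gxy^{-1}$ and $h=\sigma(x)^{-1}\wh gx=g_\sigma$ already matches Definition~\ref{def: type sigma} exactly.
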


This result naturally provokes the question of whether and when the simplex itself is a model for $\P_r$. We answer this question below in Proposition~\ref{prop: cw failure}.

For the proof, we begin by noting that a point $\mu=\sum_{x\in X}t_xx$ of the simplex spanned by a set $X$ has two natural combinatorial invariants associated to it. First, there is the support of $\mu$, which is the subset of $x\in X$ for which $t_x\neq0$. Second, $\mu$ determines a partition of its support, whose blocks are the subsets of elements with equal coefficients.

\begin{lemma}\label{lem: measure stabilizer}
For $\mu\in \Delta^{G^{r-1}}$ with support $T$ and associated partition $\alpha$, the stabilizer subgroup of $\mu$ under the action of $G^{r}$ is $\Delta(\T,\Sig_{\alp})$.
\end{lemma}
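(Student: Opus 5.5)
The plan is to compute directly how an element $(g,h)\in G\times G^{r-1}=G^r$ acts on $\mu=\sum_{x\in T}t_x x$. The action on vertices is $(g,h)x=\wh g x h^{-1}$, so
\[
(g,h)\mu=\sum_{x\in T}t_x\,\wh g xh^{-1},
\]
and this equals $\mu$ exactly when two things happen. First, the map $x\mapsto \wh gxh^{-1}$ must carry $T$ bijectively to itself. Second, it must preserve coefficients, meaning $t_{\wh gxh^{-1}}=t_x$ for each $x\in T$. Since the vertices are distinct basis points, this translates the condition $(g,h)\mu=\mu$ into: the map $x\mapsto \wh gxh^{-1}$ restricts to a permutation $\pi$ of $T$ preserving the blocks of $\alpha$, i.e.\ $\pi\in\Sigma_\alpha$.

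\textbf{Stabilizer contained in $\Delta(T,\Sigma_\alpha)$.} Let $(g,h)$ fix $\mu$ and let $\pi\in\Sigma_\alpha$ be the resulting permutation. Set $\sigma=\pi^{-1}\in\Sigma_\alpha$, so that $\wh g x h^{-1}=\sigma^{-1}(x)$; note that $T$ is finite, so bijectivity onto $T$ is automatic once the map sends $T$ into $T$ injectively. I intend to match conventions with Definition~\ref{def: type sigma} by rewriting this as $\sigma(y)^{-1}\wh g y=h$ for $y=\sigma^{-1}(x)$ ranging over $T$. Thus $\sigma(y)^{-1}\wh g y$ is independent of $y$. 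This independence is equivalent to the type-$\sigma$ identity $\sigma(x)\sigma(y)^{-1}\wh g=\wh gxy^{-1}$, which one sees by equating $\sigma(x)^{-1}\wh gx=\sigma(y)^{-1}\wh gy$ and rearranging. Hence $g\in Z(T,\sigma)$ with $g_\sigma=h$, and so $(g,h)\in\Delta(T,\sigma)\subseteq\Delta(T,\Sigma_\alpha)$.

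\textbf{$\Delta(T,\Sigma_\alpha)$ contained in the stabilizer.} Conversely, take $g$ of type $\sigma\in\Sigma_\alpha$ and set $h=g_\sigma$. Then $\wh g y h^{-1}=\sigma(y)$ for every $y\in T$, so $(g,h)$ permutes $T$ via $\sigma$. Because $\sigma$ preserves blocks of $\alpha$, it preserves the coefficients $t$, and therefore $(g,h)\mu=\mu$.

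\textbf{Main obstacle.} The only delicate point is bookkeeping of conventions: whether the permutation induced by $(g,h)$ is $\sigma$ or $\sigma^{-1}$, and checking the equivalence between ``$\sigma(x)^{-1}\wh gx$ is constant'' and the type-$\sigma$ condition. Since $\Sigma_\alpha$ is a group, any inversion is harmless. I would also note that $T$ is finite and nonempty because $\mu$ is a point of the simplex, so $\Delta(T,\Sigma_\alpha)$ is indeed one of the subgroups in $\P_r$.
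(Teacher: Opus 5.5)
Your overall strategy is the paper's. You compute $(g,h)\mu=\sum_{x\in T}t_x\,\wh g x h^{-1}$, observe that $(g,h)$ fixes $\mu$ exactly when $\pi(x)=\wh g x h^{-1}$ defines a block-preserving permutation $\pi\in\Sigma_\alpha$ of $T$, and then match this with the definition of type. Your reduction, the equivalence of ``$\sigma(x)^{-1}\wh g x$ is constant'' with the type-$\sigma$ identity, and your second inclusion are all correct.

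The first inclusion, however, has a genuine error at exactly the step you labeled bookkeeping. After setting $\sigma=\pi^{-1}$, so that $\wh g x h^{-1}=\sigma^{-1}(x)$, substituting $x=\sigma(y)$ gives $\wh g\,\sigma(y)h^{-1}=y$, that is, $h=y^{-1}\wh g\,\sigma(y)$. This is not the identity $h=\sigma(y)^{-1}\wh g y$ that you claim.

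The inversion is not harmless. Membership $(g,h)\in\Delta(T,\sigma)$ means $h=\sigma(x)^{-1}\wh g x$, i.e.\ $\sigma(x)=\wh g x h^{-1}$ for all $x\in T$. So $\sigma$ is forced to be the induced permutation $\pi$ itself, and $(g,h)\notin\Delta(T,\pi^{-1})$ unless $\pi^2=\id$.

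A concrete failure: take $G=\mathbb{Z}/3=\langle c\rangle$, $r=2$, and $\mu$ the barycenter of $\Delta^G$. The element $(1,c^2)$ fixes $\mu$ and induces $\pi(x)=xc$. With $\sigma=\pi^{-1}$ one gets $\sigma(y)^{-1}y=c\neq c^2=h$.

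Closure of $\Sigma_\alpha$ under inversion only guarantees that both $\pi$ and $\pi^{-1}$ lie in $\Sigma_\alpha$. It does not make $g_\sigma=h$ hold for the wrong one. Your own second inclusion computes that $(g,g_\sigma)$ acts on $T$ by $\sigma$, not $\sigma^{-1}$, which already contradicts the choice $\sigma=\pi^{-1}$ in the first.

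The fix is immediate: take $\sigma=\pi$. Then $\wh g x h^{-1}=\sigma(x)$ rearranges to $h=\sigma(x)^{-1}\wh g x$ for every $x\in T$. Hence this expression is independent of $x$, and $g$ has type $\sigma$; directly, $\sigma(x)\sigma(y)^{-1}\wh g=(\wh g x h^{-1})(h y^{-1}\wh g^{-1})\wh g=\wh g x y^{-1}$. Moreover $g_\sigma=h$, so $(g,h)\in\Delta(T,\sigma)\subseteq\Delta(T,\Sigma_\alpha)$. This is precisely the paper's argument, and with this correction your proof is complete and coincides with it.
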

\begin{proof}
We begin by writing $\mu=\sum_{x\in T}t_xx$. Given $\sigma\in \Sigma_\alpha$ and $g$ of type $\sigma$, we recall that $g_\sigma=\sigma(x)^{-1}\wh gx$ for any $x\in T$, so 
\begin{align*}
(g,g_\sigma)\cdot \mu &= \sum_{x\in T}t_x \bigl(\wh gx\bigl(\sigma(x)^{-1}\wh gx\bigr)^{-1}\bigr)
=\sum_{x\in T}t_x \sigma(x)=\mu,
\end{align*}
where the last equality follows from our assumption on $\sigma$. It follows that $\Delta(\T,\Sig_{\alp})$ stabilizes $\mu$. Conversely, supposing that $(g,h)\in G\times G^{r-1}$ stabilizes $\mu$, it follows that, for each $x\in T$, we have $\wh gxh^{-1}=z$ for some $z\in T$ with $t_x=t_z$. Thus, the formula $\sigma(x)=\wh gx h^{-1}$ defines a permutation $\sigma\in \Sigma_\alpha$. Moreover, we have
\[
\sigma(x)\sigma(y)^{-1}\wh g=\bigl(\wh gxh^{-1}\bigr)\bigl(hy^{-1}\wh g^{-1}\bigr)\wh g=\wh gxy^{-1},
    \] for $x,y\in T$, so $g$ has type $\sigma$, and rearranging the definition shows that $h=g_\sigma$, so $(g,h)\in \Delta(T,\Sigma_\alpha)$, as desired.
\end{proof}

\begin{lemma}\label{lem: designer measure}
For any finite subset $T\subseteq G^{r-1}$ and partition $\alpha$, there is a point $\mu\in\Delta^{G^{r-1}}$ with support $T$ and associated partition $\alpha$.
\end{lemma}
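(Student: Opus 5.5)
The point $\mu$ can be written down explicitly; no earlier result is needed beyond the description of points of $\Delta^{G^{r-1}}$ as finitely supported convex combinations $\sum_x t_x x$ of vertices. Write the blocks of $\alpha$ as $B_1,\ldots,B_k$, with $k\geq 1$ since $T$ is nonempty. We must choose positive weights, constant on each block and distinct across blocks, whose total sum is $1$.

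First I will fix distinct positive reals $c_1,\ldots,c_k$, say $c_i=i$. Then I set
\[
N=\sum_{i=1}^k c_i\,|B_i|
\qquad\text{and}\qquad
t_x=c_i/N \quad\text{for } x\in B_i,
\]
and define $\mu=\sum_{x\in T}t_xx$. This is well defined because $T$ is finite, so $N$ is a finite positive number.

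Next I verify the required properties in turn.
\begin{enumerate}
\item The coefficients are positive and sum to $\sum_i |B_i|c_i/N=1$, so $\mu$ is a point of the simplex $\Delta^{G^{r-1}}$.
\item The coefficient of $x$ is nonzero exactly when $x\in T$, so the support of $\mu$ is $T$.
\item Two elements of $T$ have equal coefficients if and only if they lie in the same block, because the $c_i$ are distinct and dividing by the common factor $N$ preserves distinctness. Hence the partition associated to $\mu$ is $\alpha$.
\end{enumerate}

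There is no real obstacle here. The only point needing care is that the coefficients must be distinct across blocks while still summing to $1$. Normalizing by the common factor $N$ handles both requirements at once.
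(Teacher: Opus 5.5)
Your proof is correct and takes the same route as the paper: you write down explicit weights that are positive on $T$, constant on each block of $\alpha$, and distinct across blocks. The only difference is the normalization. The paper uses coefficients $\frac{2i}{\ell(\ell+1)|A_i|}$, which fix the total mass of each block, so it has to order the blocks by non-increasing size to guarantee the coefficients are distinct; your single global constant $N$ makes distinctness follow immediately from that of the $c_i$, with no ordering needed.
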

\begin{proof}
Write $\alpha=\{A_i\}_{i=1}^\ell$ with $|A_i|$ non-increasing. Since $\sum_{i=1}^\ell i=\frac{\ell(\ell+1)}{2}$, the expression 
$\mu=\sum_{i=1}^\ell \sum_{x\in A_i} \frac{2i}{\ell(\ell+1)|A_i|} x$ defines an element of $\Delta^{G^r}$ with support $T$. To see that $\alpha$ is the partition associated to $\mu$, note that, for $i<j$, we have 
\[\frac{2i}{\ell(\ell+1)|A_i|}<\frac{2j}{\ell(\ell+1)|A_i|}\leq \frac{2j}{\ell(\ell+1)|A_j|},\] since $|A_i|$ is non-increasing. In particular, elements of $T$ have common coefficient in $\mu$ if and only if they lie in the same block of $\alpha$, as claimed.
\end{proof}

\begin{proof}[Proof of Theorem~\ref{thm: classifying model}]
By Lemma~\ref{lem: barycentric}, it suffices to verify the criteria of Theorem~\ref{thm: luck}. Lemma~\ref{lem: measure stabilizer} implies that $\Delta^{G^{r-1}}$ has isotropy in $\P_r$. Since the same result together with Lemma~\ref{lem: designer measure} implies that the fixed point set $(\Delta^{G^{r-1}})^H$ is non-empty for every $H\in\P_r$, the proof is complete upon noting that this set is also convex by Lemma~\ref{lem: convex}.
\end{proof}

\section{Classical and probabilistic invariants}

In this section, after recalling the definitions of the invariants involved, we prove Theorems~\ref{thm: homotopy fixed points} and~\ref{thm: subgroup lower bound}. We then introduce and study the notion of an impermutable group, a type of group for which classical and distributional topological complexities coincide for ``obvious'' reasons.

\subsection{Definitions and basic properties} We begin by recalling the definition of sectional category, from which subsequent definitions will be specialized. In keeping with modern usage, we adopt the ``reduced'' convention for such invariants, under which, for example, the category of a singleton is $0$. The reader is advised that certain, especially older, sections of the literature employ an alternate convention differing from ours by $1$.

\begin{definition}\label{def: secat}
Let $f:X\to Y$ be a map of topological spaces. The \emph{sectional category} of $f$, denoted $\secat(f)$, is the least $n$ for which there is an open cover $\{U_i\}_{i=0}^n$ of $Y$ such that $f$ admits a local section over each $U_i$.
\end{definition}

In most cases, this definition has an equivalent and very useful formulation. As a matter of notation, given a map $f:X\to Y$, we write $X^{\star m}_Y$ for the $m$-fold fiberwise join of $X$ over $Y$---see~\cite{Sch, Jam} for details, as well as for the proof of the following result.

\begin{lemma}\label{lem: join secat}
Let $p:E\to B$ be a Serre fibration. If $B$ is paracompact, then $\secat(p)\leq n$ if and only if the induced map $E^{\star(n+1)}_B \to B$ admits a section.
\end{lemma}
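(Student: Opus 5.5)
This is the classical fiberwise-join (Schwarz) characterization, and I would prove it in two directions.

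\emph{Sections of the join give a cover.} Suppose $s$ is a section of $E^{\star(n+1)}_B\to B$. A point of the fiberwise join over $b$ is a formal convex combination $\sum_{i=0}^n t_i e_i$ with $e_i\in p^{-1}(b)$ and $\sum t_i=1$. The coordinate functions $t_i\circ s:B\to[0,1]$ are continuous for the (strong) join topology. Set $U_i=\{b : t_i(s(b))>0\}$. These sets are open, and they cover $B$ because the coefficients sum to $1$. On $U_i$ the $i$th coordinate $e_i(s(b))$ is well defined and continuous, so it gives a local section of $p$ over $U_i$. This direction uses no hypotheses on $p$ or $B$.

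\emph{A cover gives a section of the join.} Suppose $\{U_i\}_{i=0}^n$ is an open cover with local sections $s_i:U_i\to E$. Since $B$ is paracompact, I choose a partition of unity $\{\phi_i\}$ subordinate to the cover, with $\operatorname{supp}\phi_i\subseteq U_i$. Then I define
\[
s(b)=\sum_i \phi_i(b)\, s_i(b),
\]
with the convention that a term with $\phi_i(b)=0$ is omitted. Continuity is clear on each $U_i$. The only delicate point is the behavior near the boundary of $U_i$ as $\phi_i\to 0$. Here the join topology is designed so that a coordinate with vanishing weight is forgotten, and because $\operatorname{supp}\phi_i\subseteq U_i$ is closed, the whole expression is continuous on all of $B$.

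The main obstacle is topological bookkeeping. One must choose the topology on $E^{\star(n+1)}_B$ (the strong, Milnor-type topology) so that both the coordinate functions $t_i$ and the maps $e_i$ are continuous where defined, and so that the glued section in the second direction is continuous. In Schwarz's treatment the Serre-fibration hypothesis is used to identify the homotopy type of the fiberwise join, so that this topological choice does not affect the conclusion. With that convention in place, I would simply cite \cite{Sch} for the verification.
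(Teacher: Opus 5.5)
Your argument is correct, and it is the classical Schwarz argument: coordinate supports of a section of the join give the cover, and a partition of unity glues local sections into a section of the join. The paper does not reprove the lemma; it simply cites Schwarz and James. One small remark: with the strong (Milnor) topology, neither direction of your sketch uses the Serre fibration hypothesis, only a partition of unity subordinate to a finite open cover of $B$. So your closing speculation about where Schwarz uses that hypothesis is unnecessary.
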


A point of the fiberwise join is an ordered formal convex combination of points lying in a single fiber of the map in question. Such a sum defines a probability measure on this fiber, which motivates an alternative conception of sectional category~\cite{DJ,KW1}. As a matter of notation, we write $\mathcal{B}_m(X)$ for the space of probability measures on $X$ with support of cardinality at most $m$. For more on the topology of these spaces, we direct the reader to~\cite{KK,KW1}.\footnote{As explained above, we work with the more general and flexible quotient topology on the space of probability measures, as in~\cite{KW1}, rather than the more geometric and narrow L\'{e}vy--Prokhorov topology, as in~\cite{DJ}. Nevertheless, we adopt the adjective ``distributional'' of the latter reference.}

\begin{ex}\label{ex: simplex}
If $X$ is discrete, then $\mathcal{B}_m(X)\cong\Delta^X_{m-1}$, the $(m-1)$-skeleton of the simplex spanned by $X$.
\end{ex}

Given a map $f:X\to Y$, we similarly write $\mathcal{B}_m(f)\subseteq \mathcal{B}_m(X)$ for the subspace of measures with support lying in a single fiber of $f$.

\begin{definition}
Let $f:X\to Y$ be a map. The \emph{distributional sectional category} of $f$, denoted $\dsecat(f)$, is the least $n$ for which the induced map $\mathcal{B}_{n+1}(f)\to Y$ admits a section.
\end{definition}

In view of the map from the fiberwise join to the space of probability measures alluded to above, we have the inequality $\dsecat(f)\leq \secat(f)$. 

\begin{lemma}\label{lem: secat homotopy}
Consider the following commutative diagram of maps between topological spaces:
\[
\xymatrix{
E_1\ar[dr]_-{p_1}\ar[rr]&&E_2\ar[dl]^-{p_2}\\
&B.
}
\] If the diagonal maps are Hurewicz fibrations and the horizontal map a homotopy equivalence, then $\secat(p_1)=\secat(p_2)$ and $\dsecat(p_2)=\dsecat(p_2)$.
\end{lemma}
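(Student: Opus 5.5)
The idea is to move sections back and forth along the homotopy equivalence $f\colon E_1\to E_2$ over $B$, and to note that the relevant constructions (fiberwise join, fiberwise measure spaces) are functorial for maps over $B$. The statement has a typo: it should read $\dsecat(p_1)=\dsecat(p_2)$, and that is what I will prove.

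The first step is to upgrade $f$ to a fiber homotopy equivalence. Since $p_1$ and $p_2$ are Hurewicz fibrations and $f$ is a homotopy equivalence commuting with the projections, Dold's theorem gives a map $g\colon E_2\to E_1$ over $B$. It also gives fiberwise homotopies $gf\simeq \id$ and $fg\simeq\id$, meaning homotopies through maps over $B$. In fact only the two maps $f$ and $g$ over $B$ are needed below; the homotopies play no role.

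For $\secat$, suppose $p_1$ has a local section $s\colon U\to E_1$ over an open set $U\subseteq B$. Then $f\circ s$ is a local section of $p_2$ over $U$, because $p_2 f s=p_1 s$ is the inclusion of $U$. Symmetrically, if $s$ is a local section of $p_2$, then $g\circ s$ is a local section of $p_1$. So any open cover witnessing $\secat(p_1)\le n$ also witnesses $\secat(p_2)\le n$, and conversely. This gives equality directly from Definition~\ref{def: secat}, with no need for Lemma~\ref{lem: join secat} or paracompactness.

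For $\dsecat$, a map $h\colon E\to E'$ over $B$ induces a pushforward of measures $h_*\colon \mathcal{B}_{m}(p)\to\mathcal{B}_m(p')$, sending $\sum t_i e_i$ to $\sum t_i h(e_i)$. Since $h$ preserves fibers, a measure supported in one fiber is sent to a measure supported in one fiber. Coefficients add when points collide, so the support cardinality does not increase, and $h_*$ commutes with the maps to $B$. Hence a section $\sigma$ of $\mathcal{B}_{n+1}(p_1)\to B$ gives the section $f_*\circ\sigma$ for $p_2$, and $g_*$ gives the reverse direction.

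The main obstacle is the continuity of $h_*$ in the quotient topology of \cite{KW1}. I would argue that $\mathcal{B}_m(E)$ is a quotient of $\coprod_{k\le m}\Delta^{k-1}\times E^k$, and the map $\id\times h^k$ on these pieces descends to the quotient. It remains to check that the subspace $\mathcal{B}_m(p)$ carries the corresponding topology and that compact generation causes no trouble with products. Both should follow by citing functoriality of $\mathcal{B}_m$ from \cite{KK,KW1}.
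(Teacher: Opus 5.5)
Your proof is correct. You are also right to read the statement's typo as $\dsecat(p_1)=\dsecat(p_2)$.

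The paper gives no direct argument. It cites \cite[Prop.~2.1]{Rud} for the $\secat$ claim and obtains the $\dsecat$ claim as a special case of \cite[Prop.~5.2]{KW1}.

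Your route isolates the mechanism behind both claims. Each invariant is monotone along maps over $B$: local sections post-compose, and fiberwise measure spaces are functorial via pushforward. So equality reduces to producing maps $E_1\to E_2$ and $E_2\to E_1$ over $B$.

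This is self-contained, and it avoids Lemma~\ref{lem: join secat} and any paracompactness assumption. The only external input is continuity of $h_*$ on $\mathcal{B}_m$. That is formal from the quotient description: $\id\times h^k$ respects the identifications, and restrictions to subspaces stay continuous after $k$-ification.

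You can also lighten the first step, since Dold's theorem is more than you need. Let $g'$ be a homotopy inverse of $f$ and $H\colon fg'\simeq \id_{E_2}$. Then $p_2\circ H$ is a homotopy from $p_2fg'=p_1g'$ to $p_2$. Lifting it along $p_1$ starting at $g'$ gives a map $g$ with $p_1g=p_2$. This shows the lemma only needs $p_1$ to be a Hurewicz fibration, and $f$ to have a right homotopy inverse.

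The citations buy the paper brevity and the more general statements proved in those references. Your argument buys transparency and a slightly sharper hypothesis.
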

\begin{proof}
The first claim follows from \cite[Prop.~2.1]{Rud}, and the second is a special case of~\cite[Prop. 5.2]{KW1}. 
\end{proof}

We come now to the main definitions in our inquiry. For more on these invariants, we direct the reader to~\cite{Jam,Far,BGRT,DJ,KW1,Ja}.

\begin{definition}
Let $G$ be a group. We write $\pi^1_G:BG^{[0,1]}\to BG$ for the map given by evaluation at $0$. For $r>1$, we write $\pi_G^r:BG^{[0,1]}\to BG^r$ for the map with $i$th coordinate given by evaluation at $\frac{i-1}{r-1}$. 
\begin{enumerate}
\item The \emph{category} of $G$ is $\cat(G)=\secat(\pi^1_G)$.
\item The \emph{distributional category} of $G$ is $\dcat(G)=\dsecat(\pi^1_G)$.
\item The $r$th \emph{sequential topological complexity} of $G$ is $\TC_r(G)=\secat(\pi_G^r)$.
\item The $r$th \emph{sequential distributional topological complexity} of $G$ is $\dTC_r(G)=\dsecat(\pi_G^r)$.
\end{enumerate}
\end{definition}

\subsection{Proof of Theorems~\ref{thm: homotopy fixed points} and~\ref{thm: subgroup lower bound}} The first step in the proof is to replace the path space with something more manageable.

\begin{lemma}\label{lem: path space borel}
For any $r>1$, there is a commutative diagram of the following form, in which the horizontal arrow is a homotopy equivalence:
\[\xymatrix{
BG^{[0,1]}\ar[rr]\ar[dr]_-{\pi_G^r}&&EG^r/G\ar[dl]\\
&BG^r.
}\]
\end{lemma}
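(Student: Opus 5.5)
We will construct the horizontal map explicitly and compare fibers, using the standard model of the free path fibration of an aspherical space. Take $BG=EG/G$, where $EG$ is a contractible free $G$-complex. Here $G$ acts diagonally on $EG^r=(EG)^r$, and $EG^r/G$ denotes the Borel-type quotient by this diagonal action. The downward map $EG^r/G\to EG^r/G^r=(BG)^r=BG^r$ is the quotient by the remaining factors. Since $G$ acts freely on $EG^r$, this map is a covering map with fiber $G^r/G\cong G^{r-1}$ (Lemma~\ref{lem: cosets}), hence a Hurewicz fibration over the paracompact base.

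To build the horizontal map, replace $BG^{[0,1]}$ by the homotopy-equivalent space $EG^{[0,1]}/G$. Paths in $EG$ modulo the diagonal $G$-action map to $BG^{[0,1]}$. This map is a bijection on points by the unique path lifting property of the covering $EG\to BG$: a path in $BG$ lifts to $EG$ uniquely once the starting point is chosen, and the choices of starting point form a $G$-torsor. With the compactly generated topology this bijection should be a homeomorphism, since path lifting is continuous for coverings.

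Define $\Phi\colon EG^{[0,1]}/G\to EG^r/G$ by $[\gamma]\mapsto[\gamma(0),\gamma(\tfrac1{r-1}),\ldots,\gamma(1)]$. This is well defined because evaluation commutes with the diagonal action, and the triangle commutes by construction.

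To show $\Phi$ is a homotopy equivalence, the plan is the following.
\begin{enumerate}
\item Observe that $EG^{[0,1]}\to EG^r$, given by evaluation at the $r$ points, is a Hurewicz fibration with contractible fibers. The fibers are products of path spaces between specified points of the contractible space $EG$.
\item Conclude that this map is a $G$-equivariant homotopy equivalence between free $G$-spaces. This holds because both sides are contractible and the map is equivariant.
\item Pass to quotients by the free $G$-actions to get the claim for $\Phi$.
\end{enumerate}
An alternative route avoids equivariant homotopy theory. Both sides are fibrations over $BG^r$, and $\Phi$ is a map over $BG^r$, so it suffices to check that $\Phi$ induces a weak equivalence on fibers and then upgrade via CW-type arguments. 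The fiber of $\pi_G^r$ over a basepoint is a product of $r-1$ based loop spaces $\Omega BG\simeq G$, i.e.\ homotopy discrete with $\pi_0=G^{r-1}$. The fiber of $EG^r/G\to BG^r$ is the discrete set $G^{r-1}$. So it remains to check that $\Phi$ induces a bijection on path components of fibers, which follows from the lifting description above. The five lemma on homotopy long exact sequences then gives a weak equivalence.

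The main obstacle is upgrading the weak equivalence to a genuine homotopy equivalence. For this I would use that both spaces have the homotopy type of CW complexes: path spaces of CW complexes do by Milnor's theorem, and $EG^r/G$ is a CW complex. Whitehead's theorem then applies. I would also take care with point-set topology in identifying $BG^{[0,1]}$ with $EG^{[0,1]}/G$. If this identification is delicate, it can be bypassed by using the fiberwise route directly on $\pi_G^r$. That route needs only a lift of $\Phi$ defined on $BG^{[0,1]}$, namely $\gamma\mapsto$ the class of $(\tilde\gamma(0),\ldots,\tilde\gamma(1))$ for any lift $\tilde\gamma$. This is continuous locally via local sections of the covering.
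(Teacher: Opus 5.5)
Your proposal is correct, but it takes a different route from the paper's proof.

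\textbf{The paper's route.} The paper never writes down the horizontal map. It sets up a lifting square from three pieces: the inclusion of constant paths $i\colon BG\to BG^{[0,1]}$, the map $BG\to EG^r/G$ induced on orbits by the diagonal $EG\to EG^r$, and the covering $EG^r/G\to BG^r$. It gets the dashed map from the lifting property of a trivial Hurewicz cofibration against a Hurewicz fibration. It then observes that $BG\to EG^r/G$ is induced by an equivariant weak equivalence of free $G$-spaces. Both spaces are CW complexes, so this map is a homotopy equivalence by Whitehead, and the paper concludes by two-out-of-three.

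\textbf{Your route.} You build the map explicitly by unique path lifting, $\gamma\mapsto[\tilde\gamma(0),\ldots,\tilde\gamma(1)]$. You then prove it is a weak equivalence, either on the covering spaces or fiberwise over $BG^r$. Finally you need Milnor's theorem so that Whitehead applies to the path space.

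\textbf{Comparison.} Your route makes the map and the commutativity of the triangle completely transparent, and it does not need $i$ to be a cofibration. The cost is point-set work: continuity of lifting, and the identification $BG^{[0,1]}\cong EG^{[0,1]}/G$. The paper's route applies Whitehead only to a map between honest CW complexes.

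\textbf{A faulty justification.} In your first plan, step (2) is justified incorrectly. An equivariant map between contractible free $G$-spaces is not automatically a $G$-homotopy equivalence. To get that from the equivariant Whitehead theorem you need something like free $G$-CW structures, and $EG^{[0,1]}$ is not given as one. This does no real harm: comparing the long exact sequences of the two coverings already gives a weak equivalence on quotients, which is all your Whitehead--Milnor upgrade requires. (Also, the fibration structure in step (1) is unnecessary, since any map between contractible spaces is a homotopy equivalence.)

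\textbf{A shortcut.} You can bypass both this issue and Milnor's theorem. Your explicit map $\Phi$ satisfies the identity that $\Phi\circ i$ equals the map $BG\to EG^r/G$ induced by the diagonal: a constant path lifts to a constant path at some point $e$, so it is sent to $[e,\ldots,e]$. Moreover, $i$ is a homotopy equivalence, since paths contract to their starting points via $\gamma\mapsto\gamma(t\,\cdot)$. Two-out-of-three then reduces everything to the diagonal $EG\to EG^r$, which is exactly the paper's key input.
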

\begin{proof}
Consider the following commutative solid diagram of maps between topological spaces:
\[\xymatrix{
EG\ar[d]\ar[r]&EG^r\ar[d]\\
BG\ar[d]_-i\ar[r]&EG^r/G\ar[d]\\
BG^{[0,1]}\ar@{-->}[ur]\ar[r]^-{\pi_G^r}&BG^r.
}\] Here, we have written $i$ for the inclusion of the constant paths, the top arrow is the diagonal map, which is $G$-equivariant, and the middle horizontal arrow is the map induced on $G$-orbits. Now, the lower right-hand vertical map is a fiber bundle, hence a Hurewicz fibration, and, since $BG$ is a CW complex, the map $i$ is a trivial Hurewicz cofibration; therefore, the indicated dashed filler exists. Since the top arrow is a weak equivalence with free action on source and target, the induced map $BG\to EG^r/G$ is also a weak equivalence, hence a homotopy equivalence by the Whitehead theorem. The claim now follows by two-out-of-three.
\end{proof}

The second step is to understand the local structure of the maps in question.

\begin{lemma}\label{lem: fiber bundles}
Let $p:E\to B$ be a fiber bundle with fiber $F$ and structure group $K$, and fix $n>0$.
\begin{enumerate}
\item The induced map $E_B^{\star n}\to B$ is a fiber bundle with fiber $F^{\star n}$ and structure group $K$.
\item The induced map $\mathcal{B}_n(p)\to B$ is a fiber bundle with fiber $\mathcal{B}_n(F)$ and structure group $K$, provided $B$ is a CW complex and $F$ locally compact.
\end{enumerate}
More specifically, if $\{k_{\alpha\beta}\}$ is a $K$-cocycle for $p$ defined on the trivializing cover $\{U_\alpha\}$, then it is also a $K$-cocycle for each of the resulting fiber bundles.
\end{lemma}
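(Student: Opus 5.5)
The plan is to work locally over a trivializing cover and transport the cocycle. Fix a trivializing open cover $\{U_\alpha\}$ for $p$ with local trivializations $\phi_\alpha: p^{-1}(U_\alpha)\xrightarrow{\cong} U_\alpha\times F$ over $U_\alpha$. Their transition functions are $\phi_\alpha\phi_\beta^{-1}(b,f)=(b,k_{\alpha\beta}(b)f)$, where $k_{\alpha\beta}:U_\alpha\cap U_\beta\to K$ and $K$ acts on $F$.

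Both constructions $E\mapsto E^{\star n}_B$ and $E\mapsto \mathcal{B}_n(p)$ are functorial for maps over $B$ and compatible with restriction to open subsets of $B$. In each case the restriction to $U_\alpha$ is the same construction applied to $p|_{U_\alpha}$. Applying the construction to $\phi_\alpha$ gives a homeomorphism over $U_\alpha$ between the restricted total space and the construction applied to the trivial bundle $U_\alpha\times F\to U_\alpha$. For the join, the latter is canonically $U_\alpha\times F^{\star n}$, since a point is a formal convex combination of $n$ points in a single fiber $\{b\}\times F$. For measures it is $U_\alpha\times\mathcal{B}_n(F)$, since a measure supported in one fiber is a pair of $b$ and a measure on $F$.

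The $K$-action on $F$ induces actions on $F^{\star n}$, diagonally on the join coordinates, and on $\mathcal{B}_n(F)$, by pushforward $\sum t_i f_i\mapsto \sum t_i kf_i$. Functoriality then shows that the transition maps of the new trivializations are $(b,\xi)\mapsto (b,k_{\alpha\beta}(b)\xi)$. This gives the final "more specifically" clause and the claim about the structure group in both parts.

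The real content is topological. We must check that the induced maps are homeomorphisms, and that the identification of the construction on $U_\alpha\times F$ with a product holds as spaces, not merely as sets.

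For the fiberwise join this is formal. The fiberwise join is built from finite products and quotients, and $\phi_\alpha$ is a homeomorphism over $U_\alpha$, so applying the construction again gives a homeomorphism over $U_\alpha$. The identification of $(U_\alpha\times F)^{\star n}_{U_\alpha}$ with $U_\alpha\times F^{\star n}$ reduces to product and quotient commuting with $U_\alpha\times-$. In compactly generated spaces this holds because $U_\alpha\times-$ preserves quotients.

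For $\mathcal{B}_n$, which carries the quotient topology from $\coprod_m \Delta^{m-1}\times X^m$, the same issue is the main obstacle. One must show that $\mathcal{B}_n(U\times F\to U)\to U\times\mathcal{B}_n(F)$ is a homeomorphism. Equivalently, $U\times(\Delta^{n-1}\times F^n)\to U\times\mathcal{B}_n(F)$ must be a quotient map, and the fiber-restricted subspace must carry the subspace topology correctly. This is where the hypotheses enter. $F$ locally compact makes $\Delta^{n-1}\times F^n$ locally compact, so products with it preserve quotient maps. $B$ a CW complex makes the open sets $U_\alpha$ and their products compactly generated and well behaved, by the standard Whitehead-type lemma that a product of a quotient map with the identity of a locally compact space is a quotient. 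With these facts I would verify that the subspace $\mathcal{B}_n(p)\subseteq \mathcal{B}_n(E)$ restricted to $U_\alpha$ agrees with the quotient topology computed fiberwise. If needed, I would invoke~\cite[Prop.~5.2]{KW1} and the properties of $\mathcal{B}_n$ from~\cite{KK,KW1} for this comparison. The remaining step is continuity of the induced $K$-action on $\mathcal{B}_n(F)$, which follows from the same quotient argument applied to $K\times\Delta^{n-1}\times F^n$.
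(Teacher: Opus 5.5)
The paper does not argue this lemma itself: it quotes \cite[Prop.~1]{Sch} for (1) and \cite[Cor.~4.8, Lem.~4.10]{KW1} for (2). Your strategy is the natural one: apply each construction to the local trivializations $\phi_\alpha$ and read off the transition maps by naturality. Your treatment of (1) and of the cocycle clause is fine. The fiberwise join is by definition a quotient of $\Delta^{n-1}\times E^n_B$, so it suffices to restrict to the saturated open preimage of $U_\alpha$ and use that $U_\alpha\times-$ preserves quotients of compactly generated spaces.

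Part (2) is where your sketch has a gap. You correctly flag that ``the fiber-restricted subspace must carry the subspace topology correctly,'' but the facts you propose do not address it.

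\emph{The cited facts.} Whitehead's lemma makes $q\times\mathrm{id}_L$ a quotient map when $L$ is locally compact Hausdorff. The map you need is $\mathrm{id}_{U_\alpha}\times q$ with $q\colon\Delta^{n-1}\times F^n\to\mathcal{B}_n(F)$. So local compactness of $\Delta^{n-1}\times F^n$ is beside the point, and $U_\alpha$ need not be locally compact. Under the paper's compactly generated convention this step is automatic anyway, as in (1).

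\emph{The actual difficulty.} It is your claim that $\mathcal{B}_n$ is compatible with restriction to open subsets of $B$. Unlike the fiberwise join, $\mathcal{B}_n(p)$ is a subspace of $\mathcal{B}_n(E)$, whose topology comes from all of $\Delta^{n-1}\times E^n$. A measure supported in a fiber over $U_\alpha$ is also represented by tuples whose zero-weight entries lie in other fibers, possibly outside $p^{-1}(U_\alpha)$, where $\phi_\alpha$ is undefined. So applying the construction to $\phi_\alpha$ does not by itself give a continuous map $\mathcal{B}_n(p)|_{U_\alpha}\to U_\alpha\times\mathcal{B}_n(F)$.

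\emph{What is missing.} You need that $\Delta^{n-1}\times E^n_B\to\mathcal{B}_n(p)$ is a quotient map; once that is known, your restriction argument goes through. This can be checked directly.
\begin{enumerate}
\item The preimage $P$ of $\mathcal{B}_n(p)$ in $\Delta^{n-1}\times E^n$ is closed because $B$ is Hausdorff, so $\mathcal{B}_n(p)$ carries the quotient topology from $P$.
\item Take a net in $P$ converging within a compact subset, and pass to a cofinal subnet on which the set of zero-weight indices is constant.
\item Replace those zero-weight entries with the entry at a fixed index of positive limiting weight. This yields a net in $\Delta^{n-1}\times E^n_B$ with the same images, converging to a representative of the same limit measure.
\end{enumerate}

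Without such an argument your (2) rests on citation, as the paper's does. The relevant results are then \cite[Cor.~4.8, Lem.~4.10]{KW1}, not \cite[Prop.~5.2]{KW1}, which is the fiberwise homotopy invariance of $\dsecat$ used in Lemma~\ref{lem: secat homotopy}.
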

\begin{proof}
The first claim is essentially~\cite[Prop. 1]{Sch}, and the second follows from~\cite[Cor. 4.8, Lem. 4.10]{KW1}.
\end{proof}

Finally, we record the following well-known relationship between sections and homotopy fixed points. For a proof, the reader may consult~\cite[Lem. 2.4]{KW2}, for example. 

\begin{lemma}\label{lem: section space}
For any group $K$ and $K$-space $X$, the homotopy fixed point space $X^{hK}$ is canonically weakly equivalent to the space of sections of the canonical map $EK\times_K X\to BK$.
\end{lemma}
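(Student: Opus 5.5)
The plan is to prove something stronger: for a fixed model of $EK$ (a free $K$-complex with contractible underlying space), there is a \emph{homeomorphism}
\[
\mathrm{Map}^K(EK,X)\;\cong\;\Gamma\bigl(EK\times_K X\to BK\bigr),
\]
where $BK=EK/K$ and $\Gamma$ denotes the space of sections. The word ``weakly'' in the statement then only accounts for the choice of model of $EK$ and for point-set subtleties of the compactly generated topologies involved.

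\textbf{Step 1 (the two assignments).} Since $K$ is discrete and acts freely and properly discontinuously on the $K$-complex $EK$, the quotient $q:EK\to BK$ is a covering map, i.e.\ a principal $K$-bundle. Likewise $EK\times X\to EK\times_K X$ is a covering, and $EK\times_K X\to BK$ is a bundle with fiber $X$.
\begin{itemize}
\item Given an equivariant $f:EK\to X$, define $s_f([e])=[e,f(e)]$. This is well defined because $[ke,f(ke)]=[ke,kf(e)]=[e,f(e)]$.
\item Conversely, given a section $s$, each point $s([e])$ has a unique representative of the form $[e,x]$, by freeness of the action on $EK$. Set $f_s(e)=x$. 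Then $s([ke])=[e,x]=[ke,kx]$ gives $f_s(ke)=kf_s(e)$, so $f_s$ is equivariant.
\end{itemize}
The two constructions are visibly inverse to one another.

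\textbf{Step 2 (continuity of the maps).} Continuity of $f_s$ is local on $EK$. Over an evenly covered open set $U\subseteq BK$ with sheet $\tilde U\subseteq EK$, the bundle $EK\times_K X$ restricts to $U\times X$ via $[e,x]\mapsto (q(e),x)$ for $e\in\tilde U$. On such a set, $f_s$ is the composite of $q|_{\tilde U}$, $s$, and the projection to $X$, hence continuous. Continuity of $s_f$ is similar.

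\textbf{Step 3 (continuity of the bijection on mapping spaces).} One must check that $f\mapsto s_f$ and its inverse are continuous for the compact-open topologies, suitably $k$-ified. I expect this to be the main, if routine, obstacle. The plan is to use the exponential law in compactly generated spaces: a continuous family $P\times EK\to X$ of equivariant maps corresponds, by the same local formulas applied parameterwise over $P\times U$, to a continuous family $P\times BK\to EK\times_K X$ of sections, and conversely. Taking $P$ to be each of the two mapping spaces in turn then yields mutually inverse continuous maps. If the point-set issues prove delicate, it suffices to prove the weaker bijection on $\pi_0$ after testing against $P\times-$ for all CW complexes $P$ (including spheres), which gives the weak equivalence directly.

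\textbf{Step 4 (canonicity).} Any two models of $EK$ are related by $K$-homotopy equivalences that are unique up to $K$-homotopy. These induce homotopy equivalences between the corresponding homotopy fixed point spaces. On the section side, they induce fiber homotopy equivalences of the bundles over homotopy equivalent bases, and these are compatible with the identification of Step 1. Hence the identification is canonical up to weak equivalence.
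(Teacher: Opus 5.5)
Your proposal is correct. The paper gives no argument of its own for this lemma, deferring to \cite[Lem.~2.4]{KW2}. What you have written is the standard direct identification that such a citation stands in for: the bijection $f\mapsto s_f$, with $s_f([e])=[e,f(e)]$, between equivariant maps $EK\to X$ and sections of $EK\times_K X\to BK$. It is made continuous by the covering-space local triviality and the exponential law in compactly generated spaces.

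Compared with the citation, your route is self-contained and proves more. For a fixed model of $EK$, with $BK=EK/K$, the two spaces are homeomorphic, naturally in $X$, so the fallback at the end of Step 3 is not needed.

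The one place you are vaguer than necessary is Step 4. Rather than speaking of fiber homotopy equivalences over homotopy equivalent bases, argue as follows. For an equivariant map $\phi\colon EK\to EK'$, the bundle $EK\times_K X\to BK$ is the pullback of $EK'\times_K X\to BK'$ along the induced map of orbit spaces. Under your identification, precomposition with $\phi$ corresponds exactly to pulling back sections. Since precomposition with a $K$-homotopy equivalence is a homotopy equivalence of homotopy fixed point spaces, this gives the canonicity cleanly.
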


With these results in hand, we are now in a position to prove the main results.

\begin{proof}[Proof of Theorem~\ref{thm: homotopy fixed points}]
For the first inequality, Lemmas~\ref{lem: secat homotopy} and~\ref{lem: path space borel} imply that $\dTC_r(G)$ is the distributional sectional category of the projection from $EG^r/G$ to $BG^r$, which, via Lemma~\ref{lem: cosets}, we may identify with the projection from the Borel construction $EG^r\times_{G^r} G^{r-1}$. Appealing to Example~\ref{ex: simplex} and Lemma~\ref{lem: fiber bundles}, we find that $\dTC_r(G)$ is the minimal $n$ for which the bundle $EG^r\times_{G^r} \Delta^{G^{r-1}}_{n}$ over $BG^r$ admits a section---here, we use that $\mathcal{B}_{n+1}(X)\cong \Delta^X_{n}$ for $X$ discrete. By Lemma~\ref{lem: section space}, the existence of such a section is equivalent to non-vacuity of $(\Delta_n^{G^{r-1}})^{hG^r}$, and the claim now follows from Theorem~\ref{thm: classifying model}, since $\Delta^{G^{r-1}}$ maps to its barycentric subdivision in a cellular and equivariant manner.

The proof of the equality $\TC_r(G)=\TC_{\D_r}(G^r)$ follows this argument very closely. The same lemmas permit us to deal with the Borel construction rather than the path space, and, since the base space is a CW complex, hence paracompact, Lemmas~\ref{lem: join secat} and~\ref{lem: fiber bundles} imply that the number in question is the minimal $n$ for which the bundle $EG^r\times_{G^r} (G^{r-1})^{\star (n+1)}$ over $BG^r$ admits a section. By Lemma~\ref{lem: section space} and Theorem~\ref{thm: fglo}, it now suffices to argue that $(G^{r-1})^{\star (n+1)}$ admits a homotopy fixed point if and only if $(G^{r-1})^{\star\infty}_n$ does so. The ``only if'' direction is immediate, since the former space is canonically and equivariantly a subspace of the latter, while the ``if'' direction follows from the equivariant Whitehead theorem as in~\cite[4.11]{FO}

The proof of the second inequality follows in the same manner from~\cite[Prop. 2.1]{KW2} and Proposition~\ref{prop: simplex over G}. Finally, in view of Example~\ref{example: trivial family cd}, the equality $\cat(G)=\TC_{\{1\}}(G)$ is equivalent to the Eilenberg--Ganea theorem~\cite{EG}.
\end{proof}

\begin{proof}[Proof of Theorem~\ref{thm: subgroup lower bound}]
By Theorem~\ref{thm: homotopy fixed points} and Corollary~\ref{cor: general subgroup lower bound}, it suffices to show that $K$ contains no elements of the form $(g,g_\sigma)$ with $1\neq \sigma\in \Sigma_T$ for $T\subseteq G^{r-1}$ a finite subset. Supposing otherwise, setting $n=|\sigma|$, and appealing to the proof of Proposition~\ref{prop: subgroup}, we find that $(g,g_\sigma)^n=(g^n, (g^n)_{\id})$ which lies in a conjugate of the diagonal by Proposition~\ref{prop: diagonal equivalence}. It follows that $(g,g_\sigma)^n$ is trivial by our assumption on $K$, so $g^n=1$, a contradiction.
\end{proof}

\subsection{Impermutability}\label{section: impermutability} In this section, we identify a sufficient condition on a group $G$ under which Theorem~\ref{thm: homotopy fixed points} guarantees that $\dTC=\TC$.

\begin{definition}
We say that the group $G$ is $r$-\emph{permutable} for some $r>1$ if there is a finite subset $T\subseteq G^{r-1}$, a nontrivial permutation $\sigma\in \Sigma_T$, and an element $g\in G$ of type $\sigma$. We say that $G$ is \emph{impermutable} if $G$ is not $r$-permutable for some $r>1$.
\end{definition}

As we now show, these conditions are equivalent for varying $r$.

\begin{lemma}\label{lem: permutability}
The group $G$ is $r$-permutable for some $r>1$ if and only if $r$-permutable for every $r>1$.
\end{lemma}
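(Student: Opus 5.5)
The plan is to reduce everything to the case $r=2$ by first rewriting the type condition. I will show that $G$ is $r$-permutable if and only if it is $2$-permutable.

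\emph{Reformulation.} For $T\subseteq G^{r-1}$ and $\sigma\in\Sigma_T$, I claim that $g$ has type $\sigma$ if and only if there is some $h\in G^{r-1}$ with $\sigma(x)=\wh g x h^{-1}$ for all $x\in T$.
\begin{itemize}
\item If $g$ has type $\sigma$, take $h=g_\sigma$. Unwinding $g_\sigma=\sigma(x)^{-1}\wh g x$ gives exactly $\sigma(x)=\wh g x g_\sigma^{-1}$.
\item Conversely, suppose such an $h$ exists. Then $\sigma(x)\sigma(y)^{-1}\wh g=\wh g x h^{-1} h y^{-1}\wh g^{-1}\wh g=\wh g x y^{-1}$. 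This is the same computation as in the proof of Lemma~\ref{lem: measure stabilizer}.
\end{itemize}
So $G$ is $r$-permutable exactly when some element $(g,h)\in G^r$, acting on $G^{r-1}$ as in Lemma~\ref{lem: cosets}, preserves a finite subset $T$ and acts on it by a nontrivial permutation. This action is coordinatewise: in coordinate $i$ it is $x_i\mapsto g x_i h_i^{-1}$.

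\emph{From $r$ to $2$.} Suppose $(g,h)$ permutes the finite set $T\subseteq G^{r-1}$ nontrivially. Pick $x\in T$ with $\sigma(x)\neq x$, and then a coordinate $i$ with $\sigma(x)_i\neq x_i$. Let $T_i\subseteq G$ be the image of $T$ under projection to the $i$th coordinate.
\begin{itemize}
\item The map $t\mapsto g t h_i^{-1}$ sends $T_i$ into itself, because $\sigma(T)=T$.
\item It is injective, since it is left and right multiplication. As $T_i$ is finite, it is therefore a permutation $\sigma_i$ of $T_i$.
\item It is nontrivial, because it moves $x_i$.
\end{itemize}
By the reformulation with $r=2$, $g$ has type $\sigma_i$, so $G$ is $2$-permutable.

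\emph{From $2$ to $r$.} Suppose $g,h\in G$ and a finite $S\subseteq G$ are given with $\tau(s)=gsh^{-1}$ a nontrivial permutation of $S$. Set $T=\{\wh s: s\in S\}\subseteq G^{r-1}$ and use the element $(g,\wh h)$. Then $\wh g\,\wh s\,\wh h^{-1}=\wh{\tau(s)}$, so $(g,\wh h)$ permutes $T$ via $\wh s\mapsto\wh{\tau(s)}$. This permutation is nontrivial because $s\mapsto\wh s$ is injective. Hence $G$ is $r$-permutable.

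Combining the two directions gives the lemma. The only step needing care is the reformulation, specifically that $g_\sigma$ is well defined and serves as the required $h$. The paper already records that $\sigma(x)^{-1}\wh g x$ is independent of $x$, so I expect no real obstacle.
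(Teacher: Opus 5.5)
Your proof is correct and takes essentially the same route as the paper. Both reduce to $r=2$, use the diagonal embedding $s\mapsto\wh s$ for the direction from $2$ to $r$, and project to a coordinate on which $\sigma$ acts nontrivially for the direction from $r$ to $2$. You make explicit the reformulation of ``type $\sigma$'' as ``$\sigma$ is the restriction of the action of some $(g,h)\in G^r$'', which the paper leaves as a direct verification. You also use the whole projection $T_i$ instead of extracting a single cycle via the paper's $n(x)$, which makes the argument slightly cleaner.
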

\begin{proof}
It suffices to show, for fixed $r$, that $G$ is $r$-permutable if and only if $2$-permutable. 

Assuming $2$-permutability, and given $g$ of nontrivial type $\sigma\in \Sigma_T$ with $T\subseteq G$, let $\wh T=\{\wh x\mid x\in T\}\subseteq G^{r-1}$. Via the diagonal homomorphism, the sets $T$ and $\wh T$ are canonically in bijection, so $\sigma$ determines a nontrivial permutation $\wh \sigma$ of $\wh T$, and an immediate verification shows that $g$ is of type $\wh \sigma$.

Assume instead that $G$ is $r$-permutable, and choose $g$ of nontrivial type $\sigma\in \Sigma_T$ with $T\subseteq G^{r-1}$. For $x\in T$, let $n(x)$ be the least natural number such that the first coordinates $\sigma^{n(x)}(x)_1$ and $x_1$ coincide. Note that $n(x)$ is bounded above by $|\sigma|$, and, without loss of generality, since $\sigma$ is nontrivial, we have $n(x)>1$ for some $x\in T$. Among all elements of $T$ satisfying this condition, choose $x$ with $n(x)$ minimal. By minimality, the first coordinates $\sigma^{j-1}(x)_1$ are distinct for $1\leq j\leq n(x)$. Writing $S\subseteq G$ for the set of these first coordinates, a direct verification shows that $g$ is of type $\rho$, where $\rho\in \Sigma_S\cong\Sigma_{n(x)}$ is the standard $n(x)$-cycle. Since $n(x)>1$, the claim follows.
\end{proof}

The relevance of the concept of impermutability for our purposes lies in the following result.

\begin{proposition}\label{prop: impermutable families}
If $G$ is impermutable, then $\D_r=\P_r$ for every $r>1$.
\end{proposition}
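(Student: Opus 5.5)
Since $\D_r\subseteq\P_r$ by Proposition~\ref{prop: diagonal equivalence}, only the reverse inclusion $\P_r\subseteq\D_r$ needs proof. The trivial subgroup lies in $\D_r$ by definition, so fix a finite subset $T\subseteq G^{r-1}$ and a partition $\alpha$ of $T$. The plan is to show that $\Delta(T,\Sigma_\alpha)=\Delta(T,\id)$, because the latter lies in $\D_r$ by Proposition~\ref{prop: diagonal equivalence}.

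By definition, $\Delta(T,\Sigma_\alpha)=\bigcup_{\sigma\in\Sigma_\alpha}\Delta(T,\sigma)$. For each $\sigma$, the set $\Delta(T,\sigma)$ is nonempty exactly when some $g\in G$ has type $\sigma$, that is, when $Z(T,\sigma)\neq\varnothing$. By Lemma~\ref{lem: permutability}, impermutability of $G$ means that $G$ is not $r$-permutable for this particular $r$. Hence for every nontrivial $\sigma\in\Sigma_T$, and in particular for every nontrivial $\sigma\in\Sigma_\alpha$, no element of $G$ has type $\sigma$. Therefore $Z(T,\sigma)=\varnothing$ and $\Delta(T,\sigma)=\varnothing$ whenever $\sigma\neq\id$. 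The union collapses to its single surviving term, giving $\Delta(T,\Sigma_\alpha)=\Delta(T,\id)$.

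The only subtle point is reading the definition of impermutability: the phrase ``not $r$-permutable for some $r>1$'' has to be converted into ``not $r$-permutable for every $r>1$'' via Lemma~\ref{lem: permutability}. That conversion is immediate from the lemma, so no step here is a real obstacle; the argument is essentially an unwinding of definitions.
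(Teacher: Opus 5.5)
Your proposal is correct and is the same argument as the paper's, which simply cites Proposition~\ref{prop: diagonal equivalence} and Lemma~\ref{lem: permutability}. You have spelled out the implicit step that impermutability forces $Z(T,\sigma)=\varnothing$ for every nontrivial $\sigma$, so that $\Delta(T,\Sigma_\alpha)$ collapses to $\Delta(T,\id)\in\D_r$.
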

\begin{proof}
The claim is immediate from Proposition~\ref{prop: diagonal equivalence} and Lemma~\ref{lem: permutability}.
\end{proof}

Appealing to Theorem~\ref{thm: homotopy fixed points}, we draw the following conclusion.

\begin{corollary}\label{cor: impermutable consequence}
If $G$ is impermutable, then $\TC_r(G)=\dTC_r(G)$ for every $r>1$.
\end{corollary}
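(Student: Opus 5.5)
The argument is a sandwich. For any $r>1$ and any group $G$, the chain recorded in the introduction reads
\[\TC_{\P_r}(G^r)\leq \dTC_r(G)\leq \TC_r(G)=\TC_{\D_r}(G^r).\]
The first inequality and the final equality are the first and third assertions of Theorem~\ref{thm: homotopy fixed points}. The middle inequality is the general comparison $\dsecat(f)\leq\secat(f)$, applied to $f=\pi^r_G$. It is justified by the map from the fiberwise join to the space of finitely supported measures: a section of $E^{\star(n+1)}_B\to B$ yields a section of $\mathcal{B}_{n+1}(\pi^r_G)\to BG^r$. Since $BG^r$ may be taken to be a CW complex, Lemma~\ref{lem: join secat} applies, so $\secat$ really is detected by sections of the join.

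If $G$ is impermutable, Proposition~\ref{prop: impermutable families} gives $\D_r=\P_r$ as families of subgroups of $G^r$. Then $\TC_{\P_r}(G^r)$ and $\TC_{\D_r}(G^r)$ are literally the same invariant, so the outer terms of the chain coincide and every term is equal. In particular $\dTC_r(G)=\TC_r(G)$ for every $r>1$.

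None of the steps is hard, since all the substantive work sits in Theorem~\ref{thm: homotopy fixed points} and Proposition~\ref{prop: impermutable families}. The one point to check is that the inequality $\dTC_r\leq\TC_r$ holds in the generality used, including when these quantities are infinite. That causes no trouble: if $\TC_r(G)=\infty$ the middle inequality is vacuous, and the equality of the outer terms forces $\dTC_r(G)=\infty$ as well.
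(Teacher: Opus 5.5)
Your proposal is correct and follows the paper's own route: the chain $\TC_{\P_r}(G^r)\leq \dTC_r(G)\leq \TC_r(G)=\TC_{\D_r}(G^r)$ from Theorem~\ref{thm: homotopy fixed points} together with $\dsecat\leq\secat$, whose outer terms coincide once Proposition~\ref{prop: impermutable families} gives $\D_r=\P_r$. Your extra remarks on the middle inequality via Lemma~\ref{lem: join secat} and on infinite values are accurate and harmless.
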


We now give a number of partial characterizations of impermutability. Recall that a group $G$ is said to be Frobenius injective if the $k$th power map $g\mapsto g^k$ is injective for every $k>0$.

\begin{proposition}\label{prop: impermutable properties}
Let $G$ be a group.
\begin{enumerate}
\item If $G$ is impermutable, then $G$ is torsion-free.
\item If $G$ is a subgroup of an impermutable group, then $G$ is impermutable.
\item If $G$ is a finite product of impermutable groups, then $G$ is impermutable.
\item If $G$ is residually impermutable, then $G$ is impermutable.
\item If $G$ is Frobenius injective, then $G$ is impermutable. 
\end{enumerate}
\end{proposition}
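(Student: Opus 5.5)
By Lemma~\ref{lem: permutability}, it suffices to work with $r=2$ throughout. In that case an element $g\in G$ is of type $\sigma\in\Sigma_T$, with $T\subseteq G$ finite, exactly when $\sigma(x)\sigma(y)^{-1}g=gxy^{-1}$ for all $x,y\in T$. Equivalently, $g_\sigma:=\sigma(x)^{-1}gx$ is independent of $x$, that is, $\sigma(x)=gxg_\sigma^{-1}$. So $G$ is permutable if and only if there exist $g,h\in G$ and a finite set $T\subseteq G$ with $gTh^{-1}=T$ such that the map $x\mapsto gxh^{-1}$ is not the identity on $T$. I will use this reformulation for every item.

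\textbf{Items (1) and (5).} For (1), if $1\neq g$ has finite order $n$, take $T=\{1,g,\dots,g^{n-1}\}$ and $h=1$. Left multiplication by $g$ cyclically permutes $T$, so $G$ is permutable.

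For (5), suppose $g$ has nontrivial type $\sigma$ with $\sigma(x)=gxh^{-1}$, and let $x\in T$ lie in an orbit of $\sigma$ of length $n>1$. Iterating gives $\sigma^k(x)=g^kxh^{-k}$, and $\sigma^n(x)=x$ yields $g^n=xh^nx^{-1}=(xhx^{-1})^n$. Frobenius injectivity then gives $g=xhx^{-1}$, hence $\sigma(x)=gxh^{-1}=x$, contradicting $n>1$.

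\textbf{Items (2) and (3).} For (2), the condition is witnessed entirely by elements of the subgroup, so a permutation datum in $G\leq G'$ is also one in $G'$.

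For (3), let $G=G_1\times G_2$ and suppose $(g,h,T)$ is a datum in $G$. Projecting to $G_i$ gives a finite set $p_i(T)$ stable under $x\mapsto g_ixh_i^{-1}$. By impermutability of $G_i$, this map is the identity on $p_i(T)$. Since the map on $T$ acts coordinatewise, $\sigma$ is then the identity on $T$, a contradiction.

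\textbf{Item (4).} This is the main step. Take a datum with $\sigma(x)=gxh^{-1}$ and $\sigma(x_0)\neq x_0$. The finitely many relevant elements are $\sigma(x_0)x_0^{-1}\neq 1$ together with the finitely many distinct elements of $T$. Choose a homomorphism $\phi:G\to Q$ with $Q$ impermutable such that $\phi$ is injective on $T$ and $\phi(\sigma(x_0))\neq\phi(x_0)$. Residual impermutability provides this for one element at a time, and to handle the whole finite set at once I will map to a finite product of such quotients, which is impermutable by (3).

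Then $\phi(T)$ has the same cardinality as $T$ and is stable under $y\mapsto\phi(g)y\phi(h)^{-1}$. This map is not the identity on $\phi(T)$, because $\phi(x_0)$ is moved. So $Q$ is permutable, a contradiction.

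The only point requiring care is this use of (3) to separate finitely many elements simultaneously; everything else is direct verification.
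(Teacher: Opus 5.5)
Your proposal is correct and follows essentially the paper's route: (1) and (5) use the same cyclic-subset and $n$th-power computations, (2) is immediate, and (4) is the same separation argument through a finite product of impermutable groups, where, to be precise, the elements to keep nontrivial are the $xy^{-1}$ with $x\neq y\in T$ (which already covers $\sigma(x_0)x_0^{-1}$). For (3) the paper only says to adapt the proof of Lemma~\ref{lem: permutability}, and your coordinatewise projection of a datum $(g,h,T)$ with $gTh^{-1}=T$ is exactly that adaptation made explicit.
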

\begin{proof}
For the first claim, given $g\in G$ with finite order $n$, set $T=\{1, g, \ldots, g^{n-1}\}$. A direct verification shows that $g$ has type $\rho$, where $\rho\in \Sigma_T\cong\Sigma_n$ is the standard $n$ cycle. Since $G$ is impermutable, it follows that $n=1$, which is to say that $g=1$, so $G$ is torsion-free.

The second claim is essentially immediate, and the third may be proved by adapting the argument of Lemma~\ref{lem: permutability}. For the fourth, fix a finite subset $T\subseteq G$ and $g\in G$ of type $\sigma\in \Sigma_T$. For each $x\neq y\in T$, our assumption guarantees a homomorphism $\varphi_{x,y}:G\to K_{x,y}$ such that $K_{x,y}$ is impermutable and $\varphi_{x,y}(xy^{-1})\neq 1$. Writing $K=\prod_{x\neq y\in T} K_{x,y}$, we obtain a homomorphism $\varphi: G\to K$ such that $\varphi|_T$ is injective. Thus, we may view $\sigma$ as a permutation of $\varphi(T)$, and it follows that $\varphi(g)$ has type $\sigma$. Since $K$ is impermutable by (3), it follows that $\sigma$ is trivial.

The fifth claim is essentially due to~\cite{Dr2}, but we give an argument here nevertheless. Fix $T\subseteq G$, a permutation $\sigma\in \Sigma_T$, and an element $g\in G$ of type $\sigma$. Without loss of generality, $\sigma$ is an $n$-cycle, and we write $T=\{x_1,\ldots, x_n\}$ accordingly. By definition, we have $g_\sigma=x_{j+1}^{-1} g x_j$ for any $j$, where the indices are taken modulo $n$. Multiplying these $n$ many distinct but equal expressions in the appropriate order, we conclude that \[(x_{2}^{-1}gx_1)^n=g_\sigma^n=x_1^{-1}g^nx_1=(x_1^{-1}g x_1)^n.\] Invoking Frobenius injectivity and rearranging, we find that $x_1=x_2$, implying that $n=1$ and $\sigma$ is trivial. 
\end{proof}

Recall that a group is said to be bi-orderable if it admits a total order invariant under left and right multiplication.

\begin{corollary}\label{cor: impermutable list}
The class of impermutable groups contains the classes of torsion-free nilpotent, torsion-free hyperbolic, uniquely divisible, and bi-orderable groups, as well as the class of groups residually in any of these classes.
\end{corollary}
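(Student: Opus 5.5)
By Proposition~\ref{prop: impermutable properties}(4), closure under residual properties is automatic once each base class is shown to be impermutable. So the plan is to treat the four base classes one at a time, reducing each to Proposition~\ref{prop: impermutable properties}(5) (Frobenius injectivity) wherever possible and to parts (2) and (4) otherwise.

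\emph{Uniquely divisible groups.} Unique divisibility means that $k$th roots exist and are unique for every $k>0$. In particular the power maps are injective, so these groups are Frobenius injective, and part (5) applies.

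\emph{Torsion-free nilpotent groups.} It is classical (Mal'cev, Chernikov) that in a torsion-free nilpotent group $g^k=h^k$ with $k>0$ forces $g=h$. So these groups are again Frobenius injective. If a more self-contained route is wanted, one can instead embed a torsion-free nilpotent group into its Mal'cev completion, which is uniquely divisible, and then apply part (2).

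\emph{Bi-orderable groups.} Here I would verify Frobenius injectivity directly. Suppose $g<h$ in a bi-invariant order. Left and right invariance give
\[g^k<g^{k-1}h<\cdots<h^k,\]
since each step multiplies the inequality $g<h$ on the left by $g^{j}$ and on the right by $h^{k-1-j}$. Hence $g\ne h$ implies $g^k\ne h^k$.

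\emph{Torsion-free hyperbolic groups.} I expect this case to be the main obstacle, because it rests on external geometric group theory. The standard fact is that in a torsion-free hyperbolic group every nontrivial element lies in a unique maximal cyclic subgroup, namely the centralizer $C(g)$, which is infinite cyclic. Suppose $g^k=h^k$ with $k>0$. If this common power is trivial, torsion-freeness gives $g=h=1$. Otherwise $g$ and $h$ both commute with $g^k$, so both lie in the cyclic group $C(g^k)\cong\mathbb{Z}$. Root uniqueness in $\mathbb{Z}$ then gives $g=h$. So these groups are Frobenius injective as well, and part (5) finishes the argument.

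Finally, part (4) extends all four conclusions to the groups residually in each class.
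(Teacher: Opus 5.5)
Your proposal is correct and follows the paper's route. You reduce every class to Frobenius injectivity via Proposition~\ref{prop: impermutable properties}(5), handle bi-orderable groups by the same comparison $g<h\Rightarrow g^k<h^k$, and get the residual classes from part (4); the only difference is that you sketch why the nilpotent, hyperbolic, and uniquely divisible cases are Frobenius injective, which the paper simply cites from Dranishnikov.
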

\begin{proof}
As explained in~\cite{Dr2}, the first three types of groups are Frobenius injective, so it remains to show that bi-orderable groups are also Frobenius injective. Given $x,y\in G$ with $G$ bi-orderable, we may assume that $x<y$, and an easy induction shows that $x^n<y^n$ for every $n>0$, so $x^n\neq y^n$ by irreflexivity.
\end{proof}

Corollary~\ref{cor: equality impermutable} now follows by combining Corollaries~\ref{cor: impermutable consequence} and~\ref{cor: impermutable list}. In view of Proposition~\ref{prop: impermutable properties}, we record an example of a torsion-free group that is not impermutable.

\begin{proposition}\label{prop: klein}
The fundamental group of the Klein bottle is not impermutable.
\end{proposition}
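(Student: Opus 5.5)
The plan is to exhibit an explicit witness to $2$-permutability, which suffices by Lemma~\ref{lem: permutability}. Concretely, we look for a finite subset $T\subseteq G$ (with $r=2$), a nontrivial permutation $\sigma\in\Sigma_T$, and an element $g$ of type $\sigma$. By the definition of type, $g$ has type $\sigma$ exactly when the element $g_\sigma=\sigma(x)^{-1}gx$ is independent of $x\in T$; equivalently, $\sigma(x)=gxh^{-1}$ for a single $h\in G$ and all $x\in T$. So the task is to find $g,h$ for which the map $x\mapsto gxh^{-1}$ permutes some finite set $T$ nontrivially.

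The proof of Proposition~\ref{prop: impermutable properties}(5) shows what is required of a $2$-cycle $T=\{x_1,x_2\}$. Its computation forces $(x_2^{-1}gx_1)^2=(x_1^{-1}gx_1)^2$ with $x_2^{-1}gx_1\neq x_1^{-1}gx_1$. Thus we need a failure of Frobenius injectivity for squares, which the Klein bottle group supplies. We use the presentation $\pi_1(K)=\langle a,b\mid bab^{-1}=a^{-1}\rangle$. In this group $b^2$ is central, and $(ab)^2=abab=a(ba b^{-1})b^2=b^2$, so $b$ and $ab$ are distinct elements with the same square.

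To build the witness, take $T=\{1,x\}$ and $g=b$, and ask for $h$ with $b\cdot 1\cdot h^{-1}=x$ and $b\,x\,h^{-1}=1$. The second equation gives $h=bx$, and substituting into the first gives $x=bx^{-1}b^{-1}$, that is, $xbx=b$. Try $x=a$: then $aba=b(b^{-1}ab)a=ba^{-1}a=b$, using $b^{-1}ab=a^{-1}$, which follows from the relation. So $T=\{1,a\}$, with $\sigma$ the transposition, $g=b$ and $h=ba$, satisfies both equations. Since $a\neq 1$ in $\pi_1(K)$ (it generates an infinite cyclic normal subgroup), $\sigma$ is nontrivial.

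It remains to verify directly that $g$ has type $\sigma$ in the sense of Definition~\ref{def: type sigma}, i.e.\ $\sigma(x)\sigma(y)^{-1}b=bxy^{-1}$ for $x,y\in T$. The nontrivial case reduces to $a^{-1}b=ba$, which holds because $ba b^{-1}=a^{-1}$. The only genuine obstacle is locating the right $T$ and $g$; once the relation $aba=b$ is noticed, everything is a one-line computation in the group.
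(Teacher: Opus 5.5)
Your proof is correct and takes the same approach as the paper. Both exhibit an explicit two-element set $T$ on which $b$ has the type of the transposition, and in each case the check reduces to the defining relation $bab^{-1}=a^{-1}$. The paper uses $T=\{a,a^{-1}\}$ where you use $T=\{1,a\}$; this is an inessential difference in the choice of witness, and your two off-diagonal conditions $ab=ba^{-1}$ and $a^{-1}b=ba$ are indeed equivalent to each other and to the relation.
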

\begin{proof}
Recall that the group in question is $K=\langle a,b\mid a^b=a^{-1}\rangle$. Setting $T=\{a,a^{-1}\}$ and writing $\sigma\in\Sigma_T$ for the unique nontrivial permutation, we claim that $b$ is of type $\sigma$. Indeed, we have
\begin{align*}
\sigma(a)\sigma(a^{-1})^{-1}b&=a^{-2}b
=ba^{2}
=ba(a^{-1})^{-1}\\
\sigma(a^{-1})\sigma(a)^{-1}b&=a^{2}b
=ba^{-2}
=ba^{-1}a^{-1},
\end{align*}
as required by the definition.
\end{proof}

Stronger, one can show that $\D_r(K)\neq \P_r(K)$ for every $r>1$. We defer this verification to future work, in which we undertake a detailed study of these families. Nevertheless, we can give a negative answer to~\cite[Prob. 3.2.6]{Dr2}, which asks whether $\Delta^K$ is a model for $\D_2(K)$. In fact, according to Proposition~\ref{prop: klein} and the following result, this space is not even a $K$-complex.

\begin{proposition}\label{prop: cw failure}
Let $G$ be a group. For any $r>1$, the simplex $\Delta^{G^{r-1}}$ is a $G^r$-complex if and only if $G$ is impermutable.
\end{proposition}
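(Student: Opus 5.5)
We will apply the criterion of Proposition~\ref{prop: G-CW criterion} to the simplex $\Delta^{G^{r-1}}$, with its standard CW structure whose open cells are the open faces spanned by finite subsets $T\subseteq G^{r-1}$. The $G^r$-action is cellular, because it is induced by an action on the vertex set. So $\Delta^{G^{r-1}}$ is a $G^r$-complex if and only if every element of $G^r$ that maps an open face to itself fixes that face pointwise.

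Our first step is to restate this condition combinatorially. An element $(g,h)$ maps the open face spanned by $T$ to itself exactly when $T$ is preserved by the vertex map $x\mapsto \wh g x h^{-1}$. In that case $\sigma(x)=\wh gxh^{-1}$ is a permutation of $T$. The computation in the proof of Lemma~\ref{lem: measure stabilizer} then shows that $g$ is of type $\sigma$ and that $h=g_\sigma$. Because the action is affine, $(g,h)$ fixes the face pointwise if and only if it fixes every vertex of $T$, that is, if and only if $\sigma=\id$. Conversely, suppose $g$ is of type $\sigma$ for some $\sigma\in\Sigma_T$. By definition of $g_\sigma$, the element $(g,g_\sigma)$ sends each $x\in T$ to $\wh g x g_\sigma^{-1}=\sigma(x)$. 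Hence it preserves the face spanned by $T$ setwise, and it acts on the vertices of that face by $\sigma$.

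The equivalence follows directly. Suppose $G$ is impermutable; by Lemma~\ref{lem: permutability} it is not $r$-permutable. Then any $(g,h)$ preserving a face induces a permutation $\sigma$ of the face's vertex set $T$ for which $g$ has type $\sigma$, so $\sigma=\id$ and the face is fixed pointwise. Proposition~\ref{prop: G-CW criterion} then shows that $\Delta^{G^{r-1}}$ is a $G^r$-complex. Suppose instead $G$ is permutable, hence $r$-permutable by Lemma~\ref{lem: permutability}. Then there are a finite $T\subseteq G^{r-1}$, a permutation $\sigma\neq\id$ of $T$, and $g$ of type $\sigma$. The element $(g,g_\sigma)$ preserves the open face spanned by $T$ but moves some of its vertices. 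Since faces are closed, it also moves points of the open face near those vertices, so it does not fix the open face pointwise. By Proposition~\ref{prop: G-CW criterion} the simplex is therefore not a $G^r$-complex.

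The only point that needs care is the identification $h=g_\sigma$ in the first step, together with the fact that $\sigma$ really is a bijection of $T$. Both follow because translation by $(g,h)$ is a bijection of $G^{r-1}$ that maps $T$ into itself, and $T$ is finite, so the image is all of $T$. Apart from this, the argument only uses Proposition~\ref{prop: G-CW criterion} and the calculation from Lemma~\ref{lem: measure stabilizer}.
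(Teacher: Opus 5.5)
Your proposal is correct and follows essentially the same route as the paper. Both apply Proposition~\ref{prop: G-CW criterion}, turn a setwise-stabilizing element $(g,h)$ into a permutation $\sigma(x)=\wh g x h^{-1}$ of $T$ for which $g$ has type $\sigma$, and use $(g,g_\sigma)$ as the witness in the permutable direction; the paper phrases that direction via the barycenter of the face, and leaves implicit the appeal to Lemma~\ref{lem: permutability} that you make explicit.
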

\begin{proof}
Suppose there exist a finite subset $T\subseteq G^{r-1}$, a nontrivial permutation $\sigma\in \Sigma_T$, and an element $g\in G$ of type $\sigma$. Then $(\hat g, g_\sigma)$ stabilizes the barycenter of the open simplex spanned by $T$, but it does not fix this simplex pointwise, since it permutes its vertices nontrivially. The conclusion now follows from Proposition~\ref{prop: G-CW criterion}. Conversely, suppose that $G$ is impermutable and that $(g,h)\cdot \mu$ lies in the same open simplex as $\mu$, say the simplex spanned by $T\subseteq G^{r-1}$. Writing $\mu=\sum_{x\in T}t_x x$, we have
\begin{align*}
(g,h)\cdot \mu &= \sum_{x\in T}t_x (\wh gxh^{-1})
=\sum_{x\in T}s_x x.
\end{align*} Without loss of generality, the barycentric coordinates $t_x$ and $s_x$ are all nonzero, and it follows that the formula $\sigma(x)=\wh gxh^{-1}$ defines a permutation $\sigma\in \Sigma_T$. A direct verification shows that $g$ has type $\sigma$, whence $\sigma=\id$ by impermutability; in particular, it follows that $(g,h)$ fixes the vertices of the simplex spanned by $T$, hence the entire simplex. The conclusion now follows from Proposition~\ref{prop: G-CW criterion}.
\end{proof}

In particular, the $G^{r-1}$-simplex is never a model for any family unless $G$ is impermutable, in which case it is a model for $\D_r$.

\begin{remark}
We do not know whether the converses to Proposition~\ref{prop: impermutable families}, Corollary~\ref{cor: impermutable consequence}, or Proposition~\ref{prop: impermutable properties}(5) hold.
\end{remark}

\bigskip
\section*{}
\footnotesize{\textit{2020 Mathematics Subject Classification.} Primary 
55P91,  	
55M30,      
20F65,  	
Secondary 
55R37,  	
20B99,  	
20J06.  	

\textit{Key words and phrases.} Family of subgroups, classifying space, $G$-complex, topological complexity, simplex spanned by $G$, impermutable group.}
\bigskip

\end{document}